\newcommand{\figures}[1]{}
\newcommand{\Iref}[1]{%
	\ifstrequal{#1}{thm:GHPcompact}{3.5}{%
		\ifstrequal{#1}{lem:infinity}{3.13}{%
			\ifstrequal{#1}{lem:GHP-monotone}{3.8}{%
				\ifstrequal{#1}{lem:GHP-subsets}{5.3}{%
					\ifstrequal{#1}{thm:GHP-Metric}{3.10}{%
						\ifstrequal{#1}{thm:GHP-precompactness}{3.20}{%
							\ifstrequal{#1}{thm:GHPcomplete}{3.19}{%
								\ifstrequal{#1}{thm:convergence}{3.16}{%
									\ifstrequal{#1}{cor:strassen-integer}{2.3}{%
										\ifstrequal{#1}{thm:strassen}{2.1}{%
											\ifstrequal{#1}{subsec:randomMeasure}{4.2}{%
												???}}}}}}}}}}}}
\newcommand{\defstyle}[1]{\textbf{#1}}
\newcommand{\myprob}[1]{\mathbb P \left[ #1 \right]}
\newcommand{\norm}[1]{\left| #1 \right|}
\newcommand{\tv}[2]{||#1-#2||}
\newcommand{\bs}[1]{\boldsymbol{#1}}
\newcommand{\ali}[1]{{#1}}
\newcommand{\new}[1]{{#1}}
\newcommand{\del}[1]{}
\newcommand{\correction}[1]{{\color{black} #1}}
\newcommand{\unwritten}[1]{}
\newcommand{\pmm}{PMM}
\newcommand{\pms}{PM}
\newcommand{\diam}{\mathrm{diam}}
\newcommand{\distortion}{\mathrm{dis}}
\newcommand{\oball}[2]{B_{#1}(#2)}
\newcommand{\cball}[2]{\overline{B}_{#1}(#2)}
\newcommand{\pcball}[2]{\overline{#1}^{(#2)}}
\newcommand{\cnei}[2]{{N}_{#1}(#2)}
\newcommand{\ghp}{d_{GHP}}
\newcommand{\cghp}{d^c_{GHP}}
\newcommand{\gh}{d_{GH}}
\newcommand{\cgh}{d^c_{GH}}
\newcommand{\hausdorff}{d_H}
\newcommand{\prokhorov}{d_P}
\newcommand{\cgf}{d^c_{\tau}}
\newcommand{\gf}{d_{\tau}}
\newcommand{\dsup}{d_{\mathrm{sup}}}
\newcommand{\restrict}[2]{{
		\left.\kern-\nulldelimiterspace 
		#1 
		\vphantom{\big|} 
		\right|_{#2} 
}}
\newcommand{\mc}{\mathfrak M}
\newcommand{\mwstar}[1]{\mathfrak M_{#1}}
\newcommand{\mwcstar}[1]{\mathfrak M^c_{#1}}
\newcommand{\bcm}{\textsf{bcm}}
\theoremstyle{theorem}
\newtheorem{theorem}{Theorem}[section]
\newtheorem{lemma}[theorem]{Lemma}
\newtheorem{proposition}[theorem]{Proposition}
\newtheorem{corollary}[theorem]{Corollary}
\newtheorem{problem}[theorem]{Problem}
\theoremstyle{definition}
\newtheorem{definition}[theorem]{Definition}
\newtheorem{assumption}[theorem]{Assumption}
\newtheorem{example}[theorem]{Example}
\theoremstyle{definition}
\newtheorem{remark}[theorem]{Remark}
\theoremstyle{theorem}
\begin{document}
	
	\title{A Unified Framework for Generalizing the Gromov-Hausdorff Metric}
	\author{Ali Khezeli\footnote{INRIA Paris, ali.khezeli@inria.fr}}
	
	\maketitle
	
	\begin{abstract}
		In this paper, \new{an approach for generalizing the Gromov-Hausdorff metric is presented, which applies to} metric spaces equipped with some additional structure. A special case is the Gromov-Hausdorff-Prokhorov metric between measured metric spaces. This abstract framework unifies several existing Gromov-Hausdorff-type metrics for metric spaces equipped with a measure, a point, a closed subset, a curve, a tuple of such structures, etc. \ali{Along with reviewing these special cases in the literature, several new examples are also presented. Two frameworks are provided, one for compact metric spaces and the other for boundedly-compact pointed metric spaces. In both cases, a Gromov-Hausdorff-type metric is defined and its topological properties are studied.} In particular, completeness and separability is proved under some conditions. This enables one to study random metric spaces equipped with additional structures, which is the main motivation of this work.
		\\
		\textbf{Keywords.} Gromov-Hausdorff metric, Gromov-Hausdorff-Prokhorov metric, 
		boundedly-compact, functor. 
	\end{abstract}

	\tableofcontents

	\section{Introduction}


\subsection{The Gromov-Hausdorff Metric}
\label{intro:gh}


How can one measure the similarity of two metric spaces?
Heuristically, the scaled lattice $\delta\mathbb Z^d=\{\delta x: x\in\mathbb Z^d \}$ {converges} to $\mathbb R^d$ as $\delta\to 0$. 
Gromov defined various notions of convergence of metric spaces in his novel book (\cite{bookGr81structuresmetriqueFrancais} and~\cite{bookGr99}), which was published in French in 1981. He used these notions to study limits of manifolds and Cayley graphs.   
In particular, he proved the celebrated result that 
a finitely-generated group has polynomial growth if and only if it is virtually nilpotent~\cite{Gr81}. Here, we are interested in Gromov's notion of \textit{Hausdorff convergence}, which is now known as \textit{Gromov-Hausdorff convergence}. This is induced by the \defstyle{Gromov-Hausdorff (GH) distance} between two metric spaces $X$ and $Y$ defined by 
\begin{equation}
	\label{eq:GH}
	\cgh(X,Y):=\inf\big\{ \hausdorff(f(X),g(Y))\big\},
\end{equation}
where the infimum is over all metric spaces $Z$ and all pairs of isometric embeddings $f:X\to Z$ and $g:Y\to Z$. Here, $d_H$ denotes the Hausdorff distance between subsets of $Z$. {In words, $X$ and $Y$ are close to each other if one can embed them isometrically in a common larger metric space such that their images are close to each other.} This notion is practically useful if $X$ and $Y$ are compact. Indeed, $\cgh$ is a metric on the space of compact metric spaces (if isometric metric spaces are considered identical). For non-compact metric spaces, Gromov defined another notion of convergence which will be discussed in Subsection~\ref{intro:non-compact}.


In fact, the Gromov-Hausdorff metric~\eqref{eq:GH} was defined before Gromov by Edwards~\cite{Ed75}. He also proved some basic properties of this metric, such as completeness and separability, which are often essential for probabilistic applications. However, the work of Edward was not (and still is not) well known and the works mentioned here do not refer to it. See also~\cite{Tu16WhoInvented}. 

Independently of Gromov's works, Aldous studied scaling limits of random trees in 1991 in his seminal paper~\cite{Al91-crtI}. Among all of the trees with $n$ given vertices, let $T_n$ be a tree chosen randomly and uniformly. Aldous showed that 
by scaling the graph-distance metric on $T_n$ by factor $1/\sqrt{n}$, the resulting tree \textit{converges} to some random fractal called the \textit{(Brownian) continuum random tree}. 
To formalize this convergence, he embedded the trees isometrically in the infinite-dimensional space $l_1$ and used the notion of weak convergence of random compact subsets of a given space~\cite{Al91-crtI}.
\ali{Later, this convergence was expressed equivalently\footnote{We didn't find this equivalence in the literature. However, we have verified that, for real trees, the two notions of convergence are equivalent. This can be seen by approximating every real tree with finite trees. The same holds for convergence of measured trees mentioned in the next paragraph.} using the Gromov-Hausdorff metric as follows
	(\cite{EvPiWi06}, see also \cite{DuLe05} and~\cite{Le05randomtrees}), which is more natural and can be applied to more models: Let $\mc$ be the space of compact metric spaces. The Gromov-Hausdorff metric $\cgh$ makes $\mc$ a complete and separable metric space. So, the convergence of the sequence $\frac 1{\sqrt n}T_n$ can be formalized by the theory of weak convergence of random elements of $\mc$\unwritten{ (one can forget the graph structure on the trees, since it is uniquely determined by the graph-distance metric)}. 
	Defining random real trees and their convergence is the first place where the Gromov-Hausdorff metric has been used in probability theory (\cite{EvPiWi06}, see also~\cite{bookEv08}). Since then, the Gromov-Hausdorff metric and its generalizations have been extensively used in the literature to study the scaling limits of various random discrete objects. 
} 


Aldous also strengthened the above convergence by proving that the uniform measure on the vertices of $\frac 1{\sqrt n}T_n$ converges to a probability measure on $l_1$. This was also expressed later (\cite{EvPiWi06}) in terms of weak convergence under the \defstyle{Gromov-Hausdorff-Prokhorov (GHP) metric}:
If $\mathcal X=(X,\mu)$ and $\mathcal Y=(Y,\nu)$ are two \textit{measured} metric spaces; {i.e., metric spaces $X$ and $Y$ equipped with finite measures $\mu$ and $\nu$,} define
%
\begin{equation}
	\label{eq:GHP}
	\cghp(\mathcal X,\mathcal Y):=\inf \{\hausdorff(f(X),g(Y))\vee \prokhorov(f_*\mu,g_*\nu)\}, 
\end{equation}
where the infimum is over all $Z,f,g$ as in~\eqref{eq:GH}. Here, $\vee$ means `maximum',  $f_*\mu$ and $g_*\nu$ denote the pushforwards of the measures $\mu$ and $\nu$ and $\prokhorov$ denotes the \textit{Prokhorov distance} of two probability measures on $Z$, recalled in~\eqref{eq:prokhorov} below. 

In~\cite{Al93crtIII}, Aldous studied the convergence of trees via the convergence of \textit{finite-dimensional distributions}: The distribution of the random tree spanned by $k$ randomly chosen vertices (and the paths connecting them) is convergent as $k$ is fixed and $n\to\infty$. Later, this 
was extended to (probability-) measured metric spaces and was called \textit{Gromov-weak convergence} in~\cite{GrPfWi09}. Also, it was shown to be equivalent to convergence under the \defstyle{Gromov-Prokhorov (GP) metric}:
\begin{equation}
	\label{eq:GP}
	d^c_{{GP}}(\mathcal X,\mathcal Y):=\inf \{\prokhorov(f_*\mu,g_*\nu)\}.
\end{equation}
This is weaker than GHP-convergence; e.g., it ignores the parts of $X$ and $Y$ that are not in the supports of $\mu$ and $\nu$. 
The GP metric will not be studied in this paper since GP-convergence does not imply the GH-convergence of the underlying metric spaces. 
See~\cite{AtLoWi16} for more discussion on various types of convergence of measured metric spaces.

In fact, the study of convergence of measured metric spaces goes back to Fukaya~\cite{Fu87collapsing} with the aim of using it to study the eigenvalues of the Laplace operator on Riemannian manifolds equipped with the volume measure (it has also been used to study optimal transport and Ricci curvature on measured metric spaces~\cite{LoVi09}). 
Gromov also defined the \textit{box metric} in the new chapter $3\frac 12$ added to the English version~\cite{bookGr99} of his book published in 1999. This metric was shown later to be bi-Lipschitz-equivalent to the GP metric~\cite{Lo13equivalence}. Some of the ideas in this chapter overlap with Aldous's work (e.g., the reconstruction theorem and Section~$3\frac 12.14$ are related to finite-dimensional distributions and their convergence), but it seems that the two works were completed independently.

\subsection{Generalizations in the Literature}

The idea of Gromov-Hausdorff convergence (or the associated metric) has been generalized in various papers to study the convergence (or perturbations) of objects of the type $(X,a)$, where $X$ is a compact metric space and $a$ is some additional structure on $X$ (the non-compact case will be discussed in Subsection~\ref{intro:non-compact}). We have already mentioned the case where the additional structure is a finite measure on $X$. 
\ali{The following are some other instances which will be discussed further in the paper. }

Gromov defined a metric between \textit{pointed} metric spaces, where the additional structure $a$ on $X$ is a selected point of $X$. This was used for the definition of convergence in the non-compact case. In~\cite{AdBrGoMi17}, the additional structure is a tuple of $k$ points of $X$, $l$ measures on $X$ or a combination of them. This is used in the study of the scaling limit of another class of random trees. 
The paper~\cite{Mi09} defines a metric on the set of compact metric spaces equipped with $k$ compact subsets. It is used in the study of the scaling limit of random quadrangulations. In~\cite{GwMi17}, the additional structure is a continuous curve. It is used for the scaling limit of random half-plane quadrangulations to keep track of the boundary curve. The paper \cite{AtLoWi17} uses the case where $a$ is a c\`adl\`ag curve or a c\`adl\`ag process in $X$ to study limits of random walks on graphs and some analogous stochastic processes defined on measured metric spaces. The latter only defines a notion of convergence, and not a metric. Various other generalizations will be discussed throughout the paper, mostly in Section~\ref{sec:special}.

The generalizations in the literature either define a GH-type metric or just a notion of convergence. Defining a metric is usually required in probabilistic applications where the underlying metric spaces are random, as explained in the next paragraph. The methods of defining such metrics will be discussed in Subsection~\ref{intro:framework}. Convergence is defined, for instance, in~\cite{Fu87collapsing} for measured metric spaces and in~\cite{AtLoWi17} for c\`adl\`ag processes. 
In such works, the convergence is defined based on one of the characterizations of the Gromov-Hausdorff convergence; e.g., by the notion of \textit{$\epsilon$-isometries} (skipped in this paper; see e.g., \cite{Fu87collapsing} or Chapter~7 of~\cite{bookBBI}) or by embedding a sequence of metric spaces in a common larger space (see e.g., Lemma~\ref{lem:common2} and also~\cite{AtLoWi17}). Indeed, such convergence can also be expressed by GH-type metrics discussed in the next subsection. We can also mention that Gromov has defined a notion of convergence (and not a metric) in the non-compact case as explained in Subsection~\ref{intro:non-compact}.

In the generalizations of the GH metric, further properties are needed for the desired application.
In the works where only convergence is defined, it is sometimes proved that the convergence gives rise to a notion of topology on the set $\mathcal C$ of (equivalence classes of) compact metric spaces equipped with an additional structure, and the resulting topological space is Hausdorff or separable (e.g., in~\cite{Fu87collapsing}). For probabilistic applications, it is usually shown that $\mathcal C$ is a separable metric space. This is convenient for using weak convergence (e.g., scaling limits) of random objects of the type $(X,a)$. For instance, separability allows one to use Prokhorov's theorem on tightness of probability measures on $\mathcal C$. In addition, proving completeness of $\mathcal C$ is useful if one desires to have a standard probability space. This enables one; e.g., to use the regular conditional distribution. However, completeness is not always necessary. This is similar to the case in the theory of (simple) point processes, where the state space is not complete. Instead, it is a Borel subset of some complete metric space (e.g., the space of locally finite measures on $\mathbb R^d$). Instances where completeness does not hold for GH-type metrics will be discussed throughout the paper. 
In addition, some convergence, pre-compactness or tightness criteria are also needed. Of course, an important task in applications is to actually prove the convergence in specific models, which is 
out of the scope of this paper. Other tasks that are of interest in the literature are studying the properties of the limiting space and using those properties to derive conclusions regarding the sequence of spaces converging to it (see e.g., \cite{objective}).

\subsection{A Unified Framework for Generalizations}
\label{intro:framework}

Many of the generalizations of the Gromov-Hausdorff metric in the literature have similar properties with similar proofs, but none of them are implied by the properties of the Gromov-Hausdorff metric. So the corresponding papers fall in one of these two classes: Some of them dedicate a lot of space to stating and proving the properties, which is technical and is usually a deviation from the main purpose of the paper. Some others only claim the statements without proof. The latter have some errors occasionally since there are many things to check and the proofs are technical. In particular, the claim of completeness is wrong in some papers, which is discussed in Section~\ref{sec:special}. Some other errors are mentioned in Subsection~\ref{subsec:motivation-noncompact}.

This paper provides an abstract framework for generalizing the Gromov-Hausdorff metric that covers all of the current generalizations to the best of the author's knowledge (excluding the GP-type metrics as already mentioned). A framework will also be provided in the non-compact case, which will be introduced in the next subsection.
The framework is based on a small number of assumptions that are usually straightforward to verify (or disprove) in special cases. Several new examples will also be presented.
This generalization is thus useful for studying random metric spaces equipped with new types of additional structures, without needing to replicate the proofs each time. We hope that this makes the use of Gromov-Hausdorff-type metrics easier, quicker and more accessible to researchers and helps them avoid various technicalities and traps.	


The framework applies to the objects of the form $(X,a)$, where $X$ is a compact metric space and $a$ belongs to a given metric space $\tau(X)$, which represents a set of possible additional structures on $X$ (e.g., the set of finite measures on $X$). The distance between two such objects is defined by an equation similarly to that of the GHP metric~\eqref{eq:GHP}. Under some assumptions on $\tau$, which are intended to be as minimalistic as possible (in short, being a \textit{functor} and having some kind of continuity), it is shown that the formula is indeed a metric. Separability (resp. completeness) of the metric is also proved by assuming separability (resp. completeness) of $\tau(X)$ for every $X$ and an additional continuity property of $\tau$. This provides the measure-theoretic requirements for defining \textit{random compact metric spaces equipped with additional structures} and to have a standard probability space. A general pre-compactness result is also provided. 

We will also discuss in detail, mainly in Section~\ref{sec:special}, how this framework extends the existing generalizations of the GH metric. The metrics in the literature are defined in mainly two ways: Some are based on embedding in a larger metric space as in~\eqref{eq:GH} and~\eqref{eq:GHP}. These are directly special cases of the framework, possibly after a minor modification that changes the metric up to a constant factor (e.g., the GHP metric is sometimes defined by using $+$ instead of $\vee$ in~\eqref{eq:GHP}).
Some other papers extend the idea of the equivalent formulation of the GH metric by the notion of \textit{correspondences}, discussed in Subsection~\ref{subsec:strassen}. For instance, an analogous formulation of the GHP metric is available in terms of correspondences and \textit{approximate couplings} based on Strassen's theorem. Here, we call such definitions \textit{Strassen-type metrics}. Although the general framework in this paper is based on the first approach, Subsection~\ref{subsec:strassen} provides a Strassen-type reformulation of the metric as well for some types of additional structures. This extends the existing Strassen-type metrics in the literature with minor modifications that change the metrics up to a constant factor; e.g., using $+$ instead of $\vee$ or having different coefficients in the formulas. We preferred to use $\vee$ in the formula since it is the right choice for having exact Strassen-type formulations.


The main novelty of the paper is the unification of the GH-type metrics and the generality and simplicity of the framework.
The use of functors is new in this context. 
Also, while sometimes more arguments are needed due to few assumptions, the proofs simplify the existing proofs in the literature and show the ideas more clearly
(in fact, we extend the proofs of the previous work~\cite{Kh19ghp} which in turn simplify those in~\cite{AbDeHo13}). For instance, there is no need to use \textit{$\epsilon$-nets} in the proofs.
Additionally, a new result is using the framework to show that in the examples where completeness does not hold, the space is still Polish by showing that it is a $G_{\delta}$ subspace of another Polish space; i.e., a countable intersection of open subsets (Subsections~\ref{subsec:mark}, \ref{subsec:ghpu}, \ref{subsec:spatialTree}, \ref{subsec:spectralGH} and \ref{subsec:cadlagprocess}). We also correct some errors in the literature (where completeness does not hold; e.g., Subsections~\ref{subsec:ghpu} and~\ref{subsec:spatialTree}, or where the metric needs a correction; see Remark~\ref{rem:flaws}).
In addition, GH-type metrics for some new examples of additional structures are also presented; e.g., marked measures, marked closed subsets, continuous functions, ends, processes of closed subsets, and operations on examples (e.g., composition and product). We hope that the framework can be used in the future for studying convergence of metric spaces with new types of additional structures.


%
%
%
%
%
%
%
%

\subsection{The Non-Compact Case}
\label{intro:non-compact}

Gromov defined a notion of convergence for pointed metric spaces $(X_i,o_i)$ ($i=1,2,\ldots$) under the assumption that $X_i$ is \textit{boundedly-compact}; i.e.,  every bounded closed subset of $X_i$ is compact (Section~3.B of~\cite{bookGr99}). 
There are some generalizations in the literature in which an additional structure $a_i$ is assumed on $X_i$.  
For instance, one can mention \textit{Gromov-Hausdorff-Prokhorov convergence} for measured metric spaces (also called \textit{measured Gromov-Hausdorff convergence} \ali{or \textit{Gromov-Hausdorff-vague convergence}}). See e.g., Section~27 of~\cite{bookVi10}.
It is known that these two topologies are metrizable and Polish. This was shown for \textit{length spaces} and discrete metric spaces in~\cite{AbDeHo13} and~\cite{I} respectively and the general case is studied in~\cite{Kh19ghp}.\footnote{In fact, some metrizations were defined by Fukaya~\cite{Fu86convergence}, but they need to be corrected; see Subsection~\ref{subsec:isometries}.} This enables one to study random (measured) non-compact metric spaces; see e.g., \cite{AbDeHo13}, \cite{I} or the references of (and citations to) \cite{AbDeHo13}.

In such generalizations, various formulas have been proposed to defined a notion of metric or convergence, all of which have the following philosophy (which is sometimes called \textit{localization}): 
\begin{equation}
	\label{eq:beingclose}
	\begin{minipage}{.85\textwidth}
		\emph{Two tuples $(X_1, o_1; a_1)$ and $(X_2,o_2;a_2)$  are close to each other when some \textit{large} compact portions of them are close.}
	\end{minipage}
\end{equation}
Here, a \textit{large} portion of $X_i$ means a (closed) ball with large radius centered at $o_i$, or a subset close to (or containing) such a large ball.
Also, the distance between two compact portions is measured with a GH-type metric as in the compact case. 
{This shows why it is important to consider pointed spaces; otherwise, a large part of the space might look like a line and another large part might look like a plane.}

To formalize~\eqref{eq:beingclose}, one should be careful that the distance between the balls of radius $r$ centered at $o_1$ and $o_2$ is not necessarily monotone in $r$. Some papers have fallen into this trap and their definitions should be corrected. Further introduction to the non-compact case is given in Subsection~\ref{subsec:motivation-noncompact} in order to avoid technicalities at this point.

%
%

Among other examples, one can mention the cases where the additional structure on $X_i$ is a continuous curve in $X_i$~\cite{GwMi17}, a c\`adl\`ag curve in $X_i$~\cite{AtLoWi17}, an isometry from $X_i$ to itself (Section~6 of~\cite{bookGr99}) or a function on $X_i$~\cite{DuLe05}. These examples will be studied in detail in Section~\ref{sec:special}. The same philosophy~\eqref{eq:beingclose} can also be used when the underlying spaces $X_1=X_2$ are identical. This has been used in stochastic geometry to defined \textit{random closed subsets},  \textit{random measures}, \textit{point processes}, \textit{random marked measures} and \textit{particle processes} in a fixed metric space (see e.g., \cite{bookScWe08}). 

In this paper, a unified framework is presented as well for the generalizations to the noncompact case. This is based on the framework of compact spaces, mentioned in the previous subsection, under additional assumptions on the map $\tau$. In order to have a more coherent presentation, the introduction to this framework is deferred to Subsection~\ref{subsec:motivation-noncompact} (the reader may jump there right now if he or she wishes).
As in the compact case, the framework unifies several existing generalizations and various new examples will also be provided in Sections~\ref{sec:noncompact} and~\ref{sec:special}.


As a final remark, Benjamini and Schramm's notion of convergence of rooted graphs (also known as \textit{local weak convergence}), defined in \cite{BeSc01} in 2001, is also closely related to Gromov-Hausdorff convergence, 
\ali{but seems to have been introduced independently.} It is in fact stronger than GH-convergence since the edges of a graph are not uniquely determined by the graph-distance metric. It will be shown in Subsection~\ref{subsec:networks} that Benjamini-Schramm convergence is equivalent to the convergence in one of the examples of this paper, namely, \textit{marked metric spaces}. \unwritten{The same holds for local weak convergence of networks~\cite{processes}; i.e., marked graphs.}

\subsection{The Structure of the Paper}



In this introduction, we reviewed convergence of metric spaces and the Gromov-Hausdorff-type metrics in the literature. Further discussion on the literature will be given through the examples. The general framework is presented in Section~\ref{sec:compact} for compact metric spaces and in Section~\ref{sec:noncompact} for non-compact metric spaces. For the sake of mathematical rigor, all proofs are included in these two sections. Some examples are also given in Subsections~\ref{subsec:examples} and~\ref{subsec:examples-noncompact} for illustrating the framework. More involved examples are provided in Section~\ref{sec:special} together with discussing how the framework generalizes the existing examples in the literature. 

\section{The Framework for Compact Metric Spaces}
\label{sec:compact}

This section provides the abstract framework for generalizing the Gromov-Hausdorff metric in the compact case. The proofs of all of the results are postponed to Subsection~\ref{subsec:proofs}.

\subsection{Notation and Basic Definitions}

The minimum and maximum binary operators 
are denoted by $\wedge$ and $\vee$ respectively. 
For all metric spaces $X$ in this paper, the metric on $X$ is usually denoted by $d$ if there is no ambiguity. 
An \textbf{extended metric} is similar to a metric, but might take the value $\infty$.
The complement of a subset $A\subseteq X$ is denoted by $A^c$ or $X\setminus A$.
Also, all measures $\mu$ on $X$ are assumed to be Borel measures. The total mass of $\mu$ is denoted by $||\mu||$.
If in addition, $\rho:X\rightarrow Y$ is measurable, $\rho_*\mu$ denotes the push-forward of $\mu$ under $\rho$; i.e., $\rho_*\mu(\cdot) = \mu(\rho^{-1}(\cdot))$. 
For $x\in X$ and $r\geq 0$, the \defstyle{closed ball} of radius $r$ centered at $x$ is defined by $\cball{r}{x}:= \cball{r}{X,x}:= \{y\in X: d(x,y)\leq r \}$.
The metric space $X$ is \defstyle{boundedly-compact} (also called proper or Heine-Borel in the literature) if every closed ball in $X$ is compact.
Given metric spaces $X$ and $Z$, a function $f:X\to Z$ is an \textbf{isometric embedding} if it preserves the metric; i.e., $d(f(x_1),f(x_2))=d(x_1,x_2)$ for all $x_1,x_2\in X$. It is an \textbf{isometry} if it is a surjective isometric embedding. 
The image of $f:X\to Z$ is denoted by either $f(X)$ or $\mathrm{Im}(f)$.
A \defstyle{measured metric space} is a pair $(X,\mu)$, where $X$ is a metric space and $\mu$ is a Borel measure on $X$. By convention, $(X,\mu)$ is called compact in this paper if $X$ is compact and $\mu$ is a finite measure. Two measured metric spaces $(X,\mu)$ and $(Y,\nu)$ are called equivalent if there exists an isometry $f:X\to Y$ such that $f_*\mu=\nu$.

The \defstyle{Hausdorff distance} of two subsets $A$ and $B$ of a metric space $Z$ is 
\begin{equation}
	\label{eq:hausdorff}
	\hausdorff(A,B):= \inf\{\epsilon\geq 0: A\subseteq \cnei{\epsilon}{B} \text{ and } B\subseteq \cnei{\epsilon}{A} \},
\end{equation}
where $\cnei{\epsilon}{A}:=\{x\in Z:\exists y\in A: d(x,y)\leq \epsilon \}$ is the closed $\epsilon$-neighborhood of $A$ in $Z$.
It is well known that $\hausdorff$ is a metric \ali{when restricted to the set of nonempty compact subsets of $Z$}. In addition, if $Z$ is complete (resp. separable, resp. compact), then the latter is also complete (resp. separable, resp. compact). 
See e.g., Proposition~7.3.7 and Theorem~7.3.8 of~\cite{bookBBI} \new{(for separability, consider finite subsets of a countable dense subset and for completeness, see Lemma~\ref{lem:compact-complete})}.

The \defstyle{Prokhorov distance} of finite measures $\mu$ and $\nu$ on $Z$ is defined by
\begin{eqnarray}
	\label{eq:prokhorov}
	\prokhorov(\mu,\nu):=\min\{\epsilon\geq 0: \forall A: \mu(A)\leq \nu(N_{\epsilon}(A))+\epsilon, \nu(A)\leq \mu(N_{\epsilon}(A))+\epsilon \},
\end{eqnarray}
where $A$ runs over all closed subsets of $Z$. This is a metric on the set of finite Borel measures on $Z$. In addition, if $Z$ is complete and separable, then the latter is also complete and separable. In this case, convergence with respect to the Prokhorov metric is equivalent to weak convergence. 
See Section~6 of~\cite{bookBi99}.

\new{When $\mu(Z)=\nu(Z)$, Strassen's theorem~\cite{St65} reformulates $\prokhorov(\mu,\nu)$ as the minimum $\epsilon\geq 0$ such that there exists a coupling of $\mu$ and $\nu$ supported on $\{(x,y)\in Z\times Z: d(x,y)\leq \epsilon \}$. In the general case, a similar reformulation is provided in~\cite{Kh19ghp} using \textit{approximate couplings}.} 

{To recall, $\mc$ is the set of equivalence classes of compact metric spaces up to isometry. Equipped with the Gromov-Hausdorff metric~\eqref{eq:GH}, $\mc$ is complete and separable. A similar claim holds for the Gromov-Hausdorff-Prokhorov (GHP) metric~\eqref{eq:GHP}. See e.g., Chapter~7 of~\cite{bookBBI} and~\cite{AbDeHo13}.}

\ali{
	Recall that a topological space is Polish when there exists a metrization that turns it into a complete separable metric space. By Alexandrov's theorem, a subset of a Polish space is Polish if and only if it is a $G_{\delta}$ subset. 
}


\subsection{Motivation}
\label{subsec:motivation}

We would like to define the distance between the objects of the form $(X,a)$, where $X$ is a compact metric space and $a$ is an additional structure on $X$. Let $\tau(X)$ be a set which represents the set of additional structures on $X$ under study (e.g., the set of finite measures on $X$).
Motivated by the GHP metric~\eqref{eq:GHP}, the distance between two such objects $(X,a)$ and $(Y,b)$ will be defined by an equation of the form
\begin{equation}
	\label{eq:ghfunctor-motivation}
	\inf_{Z,f,g}\Big\{\hausdorff(f(X),g(Y)) \vee d\big(a', b'\big) \Big\},
\end{equation}
where the infimum is over all compact metric spaces $Z$ and all isometric embeddings $f:X\to Z$ and $g:Y\to Z$. Here, $a'$ and $b'$ are the \textit{images} of $a$ and $b$ under $f$ and $g$ respectively and are considered as structures on $Z$. So a function from $\tau(X)$ to $\tau(Z)$ corresponding to $f$ should be predefined, which is denoted by $\tau_f$ here (e.g., if $a$ is a measure on $X$,  define $\tau_f(a):=f_* a$). Also, in order to make sense of $d(a',b')$, it is required that $\tau(Z)$ is a metric space for all compact metric spaces $Z$. 

The philosophy of the formula is to embed $(X,a)$ and $(Y,b)$ into a common metric space such that their images are close to each other. This philosophy is based on the assumption that such an embedding does not distort the geometry of $X$ and the geometry of the set of additional structures on $X$. So we assume that $\tau_f:\tau(X)\to\tau(Z)$ is always an isometric embedding. In addition, the philosophy is hardly justifiable without the following assumptions: (1) If $X=Z$ and $f$ is the identity function on $X$, then $\tau_f$ is also the identity function on $\tau(X)$ and (2) If $f:X\to Z$ and $h:Z\to Z'$ are isometric embeddings, then $\tau_{h\circ f}=\tau_h \circ \tau_f$. By using the language of category theory, all of these assumptions can be summarized by one simple sentence: $\tau$ should be a \textit{functor} from the category of compact metric spaces to the category of metric spaces. This will be explained in more detail in the next subsection. The notions of categories and functors are useful for simplifying the presentation, but no results or background in category theory are needed here.

The following basic examples will be updated step by step for illustrating the definitions and results (see the conclusion in Example~\ref{ex:basic-compact-result}). Further examples will be discussed in Subsection~\ref{subsec:examples} and also in Section~\ref{sec:special}.

\begin{example}
	\label{ex:basic}
	In the following examples, $X$ and $Z$ represent general compact metric spaces \ali{and $\tau_f$ is an isometric embedding from $\tau(X)$ to $\tau(Z)$, which is defined for an arbitrary isometric embedding $f:X\to Z$.}
	\begin{enumerate}[label=(\roman*)]
		\item \label{ex:basic-points} \textit{Points.} To consider compact metric spaces equipped with a distinguished point, let $\tau(X):=X$ and $\tau_f:=f$.
		\item \label{ex:basic-compact} \textit{Compact subsets.} To consider compact metric spaces equipped with a nonempty compact subset, one can let $\tau(X)$ be the set of nonempty compact subsets of $X$. If one equips $\tau(X)$ with the Hausdorff metric and lets $\tau_f(K):=f(K)$ for all $K\in\tau(X)$, then $\tau_f$ is an isometric embedding. 
		\item \label{ex:basic-measures} \textit{Measures.} To consider compact measured metric spaces, let $\tau(X)$ be the set of all finite measures (or probability measures) on $X$. One can consider the Prokhorov metric on $\tau(X)$ and let $\tau_f(\mu):=f_*\mu$. It is straightforward that $\tau_f$ is an isometric embedding. 
		\item \label{ex:basic-finite} \ali{\textit{Finite subsets.} Let $\tau(X)$ be the set of finite subsets of $X$ and $\tau_f(S):=f(S)$. One can equip $\tau(X)$ either with the Hausdorff metric or with the Prokhorov metric. For the latter, which is more convenient in point process theory, one may regard every finite subset $S$ as the counting measure $\sum_{x\in S}\delta_x$.  
			Then, objects of the form $(X,S)$ are special cases of one of the previous examples depending on the choice of the metric. Note that the two metrics induce different topologies on $\tau(X)$ and have different completions.}
	\end{enumerate}
\end{example}

\begin{remark}
	\label{rem:hausdorff-infty}
	In the above example, one can also let $\tau(X)$ be the set of compact subsets of $X$ including the empty set. For this, it is convenient to extend the Hausdorff metric by letting $\hausdorff(\emptyset,K):=\infty$ for all $K\neq \emptyset$, which leads to an {extended metric} on $\tau(X)$.
\end{remark}

Many other examples can be constructed by combining these simple examples. For instance, for considering compact metric spaces equipped with multiple additional structures (e.g., $k$ points and $l$ measures), it is enough to consider a product space $\tau(X):=\tau_1(X)\times \tau_2(X)\times\ldots$. This will be discussed more in Subsection~\ref{subsec:examples}.

\subsection{The Space $\mathcal C_{\tau}$}
\label{subsec:C}

We now formalize the motivation given in Subsection~\ref{subsec:motivation}. 
Let $\mathfrak{Comp}$ denote the class of compact metric spaces. For two compact metric spaces $X$ and $Y$, let $\mathrm{Hom}(X,Y)$ be the set of isometric embeddings of $X$ into $Y$. 
In the language of category theory, $\mathfrak{Comp}$ is a \defstyle{category} and the elements of $\mathrm{Hom}(X,Y)$ are called \defstyle{morphisms}. An \defstyle{isomorphism} is a morphism which has an inverse. Also, every compact metric space is called an \defstyle{object} of $\mathfrak{Comp}$.
The general definition of categories is omitted for brevity.

Let also $\mathfrak{Met}$ be the category of metric spaces in which the morphisms are isometric embeddings\footnote{This is different from the classical category of metric spaces in the literature in which morphisms are the functions which do not increase the distance between points.}. 

\begin{definition}
	\label{def:functor}
	A \defstyle{functor} $\tau:\mathfrak{Comp}\to\mathfrak{Met}$ is a map that assigns to every compact metric space $X$ a metric space $\tau(X)$, and assigns to every isometric embedding $f:X\to Y$ an isometric embedding $\tau_f:\tau(X)\to \tau(Y)$, such that
	\begin{enumerate}[label=(\roman*)]
		\item For all isometric embeddings $f:X\to Y$ and $g:Y\to Z$, one has $\tau_{g\circ f} = \tau_g\circ \tau_f$.
		\item For every $X$, if $f$ is the identity function on $X$, then $\tau_f$ is the identity function on $\tau(X)$.
	\end{enumerate}
\end{definition}

\ali{Note that $f,g$ are morphisms in $\mathfrak{Comp}$ and $\tau_f,\tau_g$ are morphisms in $\mathfrak{Met}$. Example~\ref{ex:basic} provides basic examples of functors from $\mathfrak{Comp}$ to $\mathfrak{Met}$.} 

\begin{definition}
	\label{def:C_tau}
	\ali{Given a functor $\tau$ as in Definition~\ref{def:functor},}
	let $\mathfrak C_{\tau}$ be the category whose objects are of the form $(X,a)$, where $X$ is an object in $\mathfrak{Comp}$ and $a\in \tau(X)$. Let the set of morphisms between $(X,a)$ and $(Y,b)$ be $\{f\in \mathrm{Hom}(X,Y):\tau_f(a)=b \}$. Let $\mathcal C_{\tau}$ be the set of isomorphism-classes of the objects of $\mathfrak C_{\tau}$ (it can be seen that $\mathcal C_{\tau}$ is indeed a set). 
\end{definition}


\subsection{\ali{The Generalized Gromov-Hausdorff Metric}}
\label{subsec:GHf}


Given a functor $\tau:\mathfrak{Comp}\to\mathfrak{Met}$ as in the previous subsection, one can formalize~\eqref{eq:ghfunctor-motivation} as follows: For two elements $(X,a)$ and $(Y,b)$ of $\mathcal C_{\tau}$, define
\begin{equation}
	\label{eq:ghfunctor}
	\cgf\big((X,a),(Y,b)\big):= \inf_{Z,f,g}\Big\{\hausdorff(f(X),g(Y)) \vee d\big(\tau_f(a),\tau_g(b)\big) \Big\},
\end{equation}
where the infimum is over all compact metric spaces $Z$ and all isometric embeddings $f:X\to Z$ and $g:Y\to Z$. 
It is clear that this definition depends only on the equivalence class of $(X,a)$ and $(Y,b)$, and hence, is indeed well defined on $\mathcal C_{\tau}$. Also, by part~\ref{ex:basic-measures} of Example~\ref{ex:basic}, the function $\cgf$ generalizes the GHP metric~\eqref{eq:GHP}. More examples will be discussed later.

\ali{The following lemma rephrases convergence of a sequence under $d^c_{\tau}$ by embedding them into a common metric space. As mentioned in the introduction, this is used in some works to define a GH-type notion of convergence without defining a metric (see also Lemma~\ref{lem:convergence2} for the non-compact case).}

\begin{lemma}
	\label{lem:common2}
	One has $\cgf\big((X_n,a_n),(X,a) \big)\to 0$ if and only if then there exists a compact metric space $Z$ and isometric embeddings $f:X\to Z$ and $f_{n}:X_{n}\to Z$ such that $f_{n}(X_{n})\to f(X)$ in the Hausdorff metric and $\tau_{f_{n}}(a_{n})\to \tau_f(a)$.
\end{lemma}

The following is the main result of this subsection. The proofs of all of the results are postponed to Subsection~\ref{subsec:proofs}. \new{Recall that a pseudometric allows two distinct points have zero distance.}

\begin{theorem}[Metric]
	\label{thm:functor-metric}
	For any functor $\tau:\mathfrak{Comp}\to\mathfrak{Met}$,
	\begin{enumerate}[label=(\roman*)]
		\item The function $\cgf$ defined in~\eqref{eq:ghfunctor} is a pseudometric on $\mathcal C_{\tau}$.
		\item If $\tau$ is pointwise-continuous (Definition~\ref{def:pointwise} below), then $\cgf$ is a metric and also the infimum in~\eqref{eq:ghfunctor} is attained.
	\end{enumerate}
\end{theorem}


\begin{definition}
	\label{def:pointwise}
	A functor $\tau:\mathfrak{Comp}\to\mathfrak{Met}$ is called \defstyle{pointwise-continuous} when for all 
	compact metric spaces $X$ and $Y$ and all sequences of isometric embeddings $f,f_1,f_2,\ldots$ from $X$ to $Y$, if $f_n\to f$ pointwise, then $\tau_{f_n}\to \tau_f$ pointwise. In addition, it is called \defstyle{pointwise-$M$-Lipschitz} (where $M<\infty$ is given) if for all morphisms $f,g:X\to Y$, one has 
	\begin{equation}
		\label{eq:lipschitz-p}
		\dsup(\tau_f,\tau_g)\leq M\cdot \dsup(f,g),
	\end{equation}
	where $\dsup$ is the sup metric. 
\end{definition}
In many examples, the functor under study has the pointwise-1-Lipschitz property, which is stronger than pointwise-continuity, and even equality holds in~\eqref{eq:lipschitz-p}. The following lemma provides basic examples.

\begin{lemma}
	\label{lem:basic-pointwise}
	In the basic examples of Example~\ref{ex:basic}, 
	$\tau$ is pointwise-continuous and pointwise-1-Lipschitz. In addition, equality holds in~\eqref{eq:lipschitz-p} for $M:=1$.
\end{lemma}

\ali{\begin{proof}[Sketch of the proof]
		Let $f,g:X\to Y$ be isometric embeddings.
		It is an easy exercise to show that for all $a\in \tau(X)$, one has $d(\tau_f(a),\tau_g(a))\leq \dsup(f,g)$ and equality holds for some $a$. \new{For instance, when $a$ is a measure, this is implied by Strassen's theorem mentioned after~\eqref{eq:prokhorov} (or by using \eqref{eq:prokhorov} directly).} This proves the claim.
\end{proof}}

The following lemma is required for proving the next theorems. \new{Recall that a map is \textit{proper} if the inverse image of every compact set is compact.}
\begin{lemma}
	\label{lem:functor-proper}
	It $\tau$ is pointwise-continuous, then for every compact metric space $X$, the map from $\tau(X)$ to $\mathcal C_{\tau}$ defined by $a\mapsto (X,a)$ is \new{1-Lipschitz} and proper \new{(but not necessarily injective). In addition, if $(X,a_n)\to (X,c)$ in $\mathcal C_{\tau}$, then there exists an isometry $h:X\to X$ and a subsequence of $(a_n)_n$ that converges to $h(c)$.} 
\end{lemma}

\begin{remark}[Extended Metrics]
	\label{rem:cgf-infty}
	If $\tau(X)$ is allowed to be an extended metric space, then it can be seen that $\mathcal C_{\tau}$ is an extended metric space. \ali{The key example is the functor of compact subsets (Remark~\ref{rem:hausdorff-infty}), which will be needed in Section~\ref{sec:noncompact}.} In addition, the other results of this section remain valid \ali{(note that the definitions of compactness, completeness and separability can be applied to extended metric spaces as well)}. 
\end{remark}

\subsection{Completeness, Separability and Precompactness}
To have completeness and separability of the metric $\cgf$ defined in~\eqref{eq:ghfunctor}, we assume another continuity property of $\tau$ stated below. 

\begin{definition}
	\label{def:hausdorff-cont}
	A functor $\tau:\mathfrak{Comp}\to\mathfrak{Met}$ is called \defstyle{Hausdorff-continuous} when for every sequence of compact metric spaces $Z,X,X_1,X_2,\ldots$ and isometric embeddings $f:X\to Z$ and $f_n:X_n\to Z$ (for $n=1,2,\ldots$), if $\hausdorff(\mathrm{Im}(f_n),\mathrm{Im}(f))\to 0$, then $\hausdorff(\mathrm{Im}(\tau_{f_n}),\mathrm{Im}(\tau_f))\to 0$. 
	In addition, it is called \defstyle{Hausdorff-$M$-Lipschitz} if for all morphisms $f:X\to Z$ and $g:Y\to Z$, 
	\begin{equation}
		\label{eq:lipschitz-h}
		\hausdorff(\mathrm{Im}(\tau_f), \mathrm{Im}(\tau_g))\leq M \cdot \hausdorff(\mathrm{Im}(f), \mathrm{Im}(g)).
	\end{equation}
\end{definition}

\begin{theorem}[Polishness]
	\label{thm:functor-polish}
	Let $\tau:\mathfrak{Comp}\to\mathfrak{Met}$ be a functor which is both pointwise-continuous and Hausdorff-continuous. 
	Then, $\mathcal C_{\tau}$ is separable (resp. complete) if and only if $\tau(X)$ is separable (resp. complete) for every $X$.
\end{theorem}

As before, many examples have the Hausdorff-1-Lipschitz property, which is stronger than Hausdorff-continuity, and even equality holds in~\eqref{eq:lipschitz-h}. The following lemma provides basic examples. \ali{However, some natural examples fail to be Hausdorff-continuous; e.g., continuous curves (Subsection~\ref{subsec:curves}). Remark~\ref{rem:Hcont-weaker} below provides weaker assumptions that are sufficient for Theorem~\ref{thm:functor-polish}.}

\begin{lemma}
	\label{lem:basic-hausdorff}
	In the basic examples of Example~\ref{ex:basic}, 
	$\tau$ is Hausdorff-continuous and Hausdorff-1-Lipschitz. In addition, equality holds in~\eqref{eq:lipschitz-h} for $M:=1$.
\end{lemma}

\begin{proof}[\new{Sketch of the proof}]
	Let $f:X\to Z$ and $g:Y\to Z$ be isometric embeddings and $\epsilon:=\hausdorff(f(X),g(Y))$. For every $a\in \tau(X)$, one can construct $b\in \tau(Y)$ such that $d(\tau_f(a),\tau_g(b))\leq \epsilon$. For instance, if $a$ is a closed subset, let $b:=g^{-1}(N_{\epsilon}(f(a)))$. Also, if $a$ is a measure, let $b:= (g^{-1}\pi f)_* a$, where $\pi:f(X)\to f(Y)$ assigns to every point of $f(X)$ its closest point in $f(Y)$. \new{Since $d(t,\pi(t))\leq \epsilon, \forall t\in f(X)$, one can directly verify that $\prokhorov(f_*a,g_*b)\leq\epsilon$ using~\eqref{eq:prokhorov} or using Strassen's theorem}. This proves the Hausdorff-1-Lipschitz property. One can also show the equality in~\eqref{eq:lipschitz-h} by considering a single point \new{$x\in X$ such that $d(f(x),g(Y))=\epsilon$}.
\end{proof}

%


\begin{remark}
	\label{rem:Hcont-weaker}
	\unwritten{Note: This is similar to Thm12.2.2 of \cite{bookScWe08} which is the case of closed subsets.}
	\ali{The proof of the above theorem shows that} the assumption of Hausdorff-continuity can be replaced by the following assumptions: For every sequence of compact metric spaces $Z$, $X$, $X_1,X_2,\ldots$ and isometric embeddings $f:X\to Z$ and $f_n:X_n\to Z$ (for $n=1,2,\ldots$) such that $\hausdorff(\mathrm{Im}(f_n),\mathrm{Im}(f))\to 0$,
	\begin{enumerate}[label=(\roman*)]
		\item \label{rem:Hcont-weaker:1} If $b\in \tau(Z)$ and $a_n\in \tau(X_n)$ (for \ali{infinitely many values of $n$}) are such that $\tau_{f_n}(a_n)\to b$, then $b\in \mathrm{Im}(\tau_f)$. 
		\item \label{rem:Hcont-weaker:2} For every $a\in \tau(X)$, there exists a sequence $a_n\in\tau(X_n)$ such that $\tau_{f_n}(a_n)\to \tau_f(a)$.
	\end{enumerate}
\end{remark}

\begin{remark}
	\ali{Condition~\ref{rem:Hcont-weaker:2} in Remark~\ref{rem:Hcont-weaker} is always implied by Hausdorff continuity. So is~\ref{rem:Hcont-weaker:1}, if we assume that $\tau(X)$ is complete for every $X$ (which implies that $\mathrm{Im}(\tau_f)$ is closed in $\tau(Z)$). Conditions~\ref{rem:Hcont-weaker:1} and~\ref{rem:Hcont-weaker:2} together mean that $\mathrm{Im}(\tau_{f_n})\to\mathrm{Im}(\tau_f)$ in the sense of Painleve-Koratowski convergence (Definition B.4 of~\cite{bookMo05}). Assuming completeness and separability in addition, the latter is equivalent to convergence in the Fell topology (Theorem~B.6 of~\cite{bookMo05}).}
\end{remark}

\begin{theorem}[Pre-compactness]
	\label{thm:functor-precompact}
	Let  $\tau:\mathfrak{Comp}\to\mathfrak{Met}$ be a functor which is pointwise-continuous and satisfies Condition~\ref{rem:Hcont-weaker:1} of Remark~\ref{rem:Hcont-weaker}. Then, a set $\mathcal A\subseteq\mathcal C_{\tau}$ is pre-compact if and only if both of the following conditions hold:
	\begin{enumerate}[label=(\roman*)]
		\item \label{thm:functor-precompact:i} The underlying compact sets of the elements of $\mathcal A$ form a pre-compact subset of $\mc$ (equipped with the Gromov-Hausdorff metric~\eqref{eq:GH}). 
		\item \label{thm:functor-precompact:ii} One can select a compact subset $\tau'(X)\subseteq \tau(X)$ for every $X\in\mathfrak{Comp}$ such that $\tau'$ is a functor (by letting $\tau'_f$ be the restriction of $\tau_f$, for every morphism $f$ of $\mathfrak{Comp}$) and $\mathcal C_{\tau'}\supseteq \mathcal A$.
	\end{enumerate}
\end{theorem}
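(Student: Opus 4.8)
The plan is to reduce the statement, in both directions, to the known Gromov-Hausdorff-Prokhorov-type precompactness criterion together with the completeness machinery already developed. Recall that a subset of a metric space is precompact iff its closure is compact; since $\mathcal C_{\tau}$ need not be complete, I will instead work with the characterization that $\mathcal A$ is precompact iff every sequence in $\mathcal A$ has a Cauchy subsequence (equivalently, $\mathcal A$ is totally bounded). The projection $\Pi:\mathcal C_{\tau}\to\mcstarzero$ sending $(X,a)$ to $X$ is $1$-Lipschitz by the definition of $\cgf$ in \eqref{eq:ghfunctor} (dropping the $\tau$-coordinate only decreases the infimum), so precompactness of $\mathcal A$ immediately forces precompactness of $\Pi(\mathcal A)$, giving condition \eqref{thm:functor-precompact:i} in the forward direction for free.

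For the ``only if'' direction it remains to produce the functor $\tau'$ of condition \eqref{thm:functor-precompact:ii}. Here is the construction I would use. Assume $\mathcal A$ precompact, so $\overline{\mathcal A}$ is compact (pass to the completion if needed, using Theorem \ref{thm:functor-polish}(\ref{part:thm:functor-complete}) is not available since we only assume Condition \eqref{rem:Hcont-weaker:1}; instead argue directly with total boundedness). By condition \eqref{thm:functor-precompact:i} and Gromov's precompactness theorem, the underlying spaces lie in a compact set $\mathcal K\subseteq\mcstarzero$. For a fixed compact metric space $X$, define $\tau'(X)$ to be the set of all $a\in\tau(X)$ such that there is a sequence $(X_n,a_n)$ in $\mathcal A$ with $(X_n,a_n)\to(X,a)$ in $\cgf$; more precisely, realize such convergence inside a common compact space $Z$ (as in the proof scheme of Theorem \ref{thm:functor-polish}), so that $\mathrm{Im}(f_n)\to\mathrm{Im}(f)$ in Hausdorff distance and $\tau_{f_n}(a_n)\to\tau_f(a)$; then set $\tau'(X):=\tau_f^{-1}$ of the set of such limit points, which is well defined up to isometry. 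One checks: (a) $\tau'$ is a functor, because if $g:X\to X'$ is an isometry and $a\in\tau'(X)$ is witnessed by $(X_n,a_n)\to(X,a)$, then $(X_n,a_n)\to(X',\tau_g(a))$ as well, so $\tau_g$ maps $\tau'(X)$ onto $\tau'(X')$, and for a general isometric embedding $f$ one restricts; (b) $\mathcal A\subseteq\mathcal C_{\tau'}$ by taking the constant sequence; (c) $\tau'(X)$ is compact. For (c), closedness is where Condition \eqref{rem:Hcont-weaker:1} of Remark \ref{rem:Hcont-weaker} enters: a limit (in $\tau(X)$) of witnessed points is again witnessed, by a diagonal extraction of the approximating sequences and the fact that $\mathrm{Im}(\tau_f)=\bigcap_n\overline{\bigcup_{m\ge n}\mathrm{Im}(\tau_{f_m})}$ forces the diagonal limit to land in $\mathrm{Im}(\tau_f)$; total boundedness of $\tau'(X)$ follows because $X$ ranges over a fixed compact family and $\mathcal A$ is totally bounded, so the $\tau$-coordinates of elements of $\mathcal A$ lying $\epsilon$-close to a fixed $X$ form an $O(\epsilon)$-net-controlled set in $\tau(X)$ via pointwise-continuity of $\tau$.

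For the ``if'' direction, assume both conditions. Given a sequence $(X_n,a_n)\in\mathcal A$, use condition \eqref{thm:functor-precompact:i} and Gromov's theorem to extract a subsequence with $X_n\to X$ in $\mcstarzero$; realize these convergences in a common compact $Z$ so that $\mathrm{Im}(f_n)\to\mathrm{Im}(f)$, $f_n:X_n\to Z$, $f:X\to Z$. Since $a_n\in\tau'(X_n)$ and $\tau'(X_n)$ is compact with $\mathrm{Im}(\tau_{f_n})\subseteq\tau(Z)$ contained in a Hausdorff-precompact family of subsets (again because the $X_n$ lie in a compact family and $\tau'$ is a compact-valued functor), a further diagonal/subsequence argument extracts $\tau_{f_n}(a_n)\to b$ in $\tau(Z)$ for some $b$; by Condition \eqref{rem:Hcont-weaker:1}, $b=\tau_f(a)$ for some $a\in\tau(X)$, and then $\cgf((X_n,a_n),(X,a))\le \hausdorff(\mathrm{Im}(f_n),\mathrm{Im}(f))\vee d(\tau_{f_n}(a_n),\tau_f(a))\to 0$. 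Hence every sequence in $\mathcal A$ has a convergent-in-$\mathcal C_{\tau}$ subsequence, which is precompactness. The main obstacle I anticipate is the bookkeeping in the ``only if'' direction: defining $\tau'$ functorially and simultaneously compact-valued requires care that the limit-point construction is genuinely functor-compatible (stable under composition and restriction of morphisms) and that compactness of $\tau'(X)$ is uniform enough across the compact family $\mathcal K$ to feed back into the ``if'' argument; this is exactly where pointwise-continuity and Condition \eqref{rem:Hcont-weaker:1} must be combined, and getting the quantifiers right — rather than any single hard estimate — is the crux.
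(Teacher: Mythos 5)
Your backward (``if'') direction and the treatment of condition~\eqref{thm:functor-precompact:i} are essentially the paper's argument (though the extraction step can be done more simply: since $\tau'$ is a functor, $\tau_{f_n}(a_n)=\tau'_{f_n}(a_n)$ lies in the single compact set $\tau'(Z)$, so no diagonal argument over a ``Hausdorff-precompact family of subsets'' is needed). The genuine gap is in the ``only if'' direction, in the construction of $\tau'$. You define $\tau'(X)$ as the fiber over $X$ of the closure of $\mathcal A$, i.e.\ the set of $a\in\tau(X)$ arising as limits of sequences $(X_n,a_n)\in\mathcal A$ with $X_n\to X$. This is not a functor $\mathfrak{Comp}\to\mathfrak{Met}$: condition~\eqref{thm:functor-precompact:ii} requires a map $\tau'_f:\tau'(X)\to\tau'(Y)$ (the restriction of $\tau_f$) for \emph{every} isometric embedding $f:X\to Y$, and with your definition $\tau_f(a)$ need not lie in $\tau'(Y)$ when $Y$ is strictly larger than $X$, because no element of $\mathcal A$ need have underlying space close to $Y$. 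Concretely, if $\mathcal A$ consists of a single pair $(X_0,a_0)$ with $X_0$ a one-point space, your $\tau'(Y)$ is empty for every $Y$ not isometric to $X_0$, so there is no map $\tau'_f$ at all for an embedding $f:X_0\to Y$. Your remark that ``for a general isometric embedding $f$ one restricts'' does not repair this, since restriction goes in the wrong direction. The paper avoids the problem by saturating over embeddings from the start: it sets $\tau_0(X):=\{\tau_f(b):(Y,b)\in\mathcal A,\ f\in\mathrm{Hom}(Y,X)\}$ and lets $\tau'(X)$ be its closure, which is functorial by construction and contains $\mathcal A$ trivially.

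With that definition the entire burden shifts to proving that $\tau_0(X)$ is pre-compact in $\tau(X)$, and this is also where your sketch is too thin. Your total-boundedness claim --- that elements of $\mathcal A$ which are $\epsilon$-close to a fixed $X$ have $\tau$-coordinates forming an ``$O(\epsilon)$-net-controlled'' subset of $\tau(X)$ --- is not justified and is in fact the crux: $\cgf$-closeness of $(X,a_1)$ and $(X,a_2)$ does \emph{not} control $d(a_1,a_2)$ in $\tau(X)$ (only the reverse inequality is immediate), since the optimal embeddings realizing the $\cgf$-distance may differ, e.g.\ by self-isometries of $X$. The paper handles this by (1) realizing the convergence $(Y_n,b_n)\to(Y,b)$ in a common compact $Z$, gluing $X$ and $Z$ along the copies of $Y_n$, embedding the gluings into one compact space $H$, and extracting convergent embeddings via Lemma~\ref{lem:embedding}, which yields $(X,a_n)\to(X,a)$ in $\cgf$ with $a=\tau_f(b)$; and then (2) invoking Lemma~\ref{lem:functor-proper} (properness of $a\mapsto(X,a)$, proved again through Lemma~\ref{lem:embedding} and pointwise-continuity) to convert this $\cgf$-convergence into a convergent subsequence of $(a_n)$ in $\tau(X)$. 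Both steps are substantive and neither appears in your proposal; as written, the forward direction does not go through.
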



\ali{Condition~\ref{thm:functor-precompact:i} in the above theorem can be characterized by the diameter and the size of $\epsilon$-nets; see e.g., Section~7.4.2 of~\cite{bookBBI}.}
For measured metric spaces, Condition~\ref{thm:functor-precompact:ii} is equivalent to the existence of a uniform upper bound on total masses of the measures (see Theorem~2.6 of~\cite{AbDeHo13}). This fact is generalized in the following proposition \ali{(e.g., let $h_X(\mu)$ be the total mass of $\mu$)}.
This result will also be used for \textit{marked measures} and \textit{marked closed subsets}, which will be defined in Subsection~\ref{subsec:mark}.

\begin{proposition}
	\label{prop:functor-precompact}
	Let $\tau$ and $\mathcal A$ be as in Theorem~\ref{thm:functor-precompact} such that Condition~\ref{thm:functor-precompact:i} of the theorem holds. 
	Assume that there exists a fixed metric\unwritten{(Metric on $E$ is needed for equivalence of compactness with sequential compactness)} space $E$ and a continuous function $h_X:\tau(X)\to E$ for every object $X$ of $\mathfrak{Comp}$ such that $h_X$ is a proper map and is compatible with the morphisms (i.e., $h_Y\circ \tau_f=h_X$ for every morphism $f:X\to Y$). Then, Condition~\ref{thm:functor-precompact:ii} in Theorem~\ref{thm:functor-precompact} is equivalent to the existence of a compact set $E'\subseteq E$ such that all elements $(X,a)\in\mathcal A$ satisfy $h_X(a)\in E'$.
\end{proposition}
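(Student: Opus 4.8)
The plan is to prove the two implications separately: the implication ``a compact $E'$ exists $\Rightarrow$ Condition~\eqref{thm:functor-precompact:ii}'' by pulling $E'$ back along $h$, and the converse by an extraction argument inside a common compact space. Two preliminary remarks will be used repeatedly: first, since $h$ is compatible with the morphisms ($h_Y\circ\tau_f=h_X$), the value $h_X(a)$ depends only on the isomorphism class of $(X,a)$, so ``$h_X(a)$ for $(X,a)\in\mathcal A$'' is well defined; second, since $E$ is a metric space, a subset of $E$ is pre-compact if and only if every sequence in it has a subsequence converging in $E$.

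For the direction where a compact $E'\subseteq E$ with $h_X(a)\in E'$ for all $(X,a)\in\mathcal A$ is given, I would set $\tau'(X):=h_X^{-1}(E')$. Properness of $h_X$ makes each $\tau'(X)$ compact. For a morphism $f:X\to Y$ and $a\in\tau'(X)$ one computes $h_Y(\tau_f(a))=h_X(a)\in E'$, hence $\tau_f(a)\in\tau'(Y)$; so the restriction of $\tau_f$ to $\tau'(X)$ is a well-defined isometric embedding into $\tau'(Y)$, and $\tau'$ inherits the functor axioms from $\tau$. The same computation shows the whole isomorphism class of an object $(X,a)$ with $a\in\tau'(X)$ stays inside $\tau'$, so $\mathcal C_{\tau'}$ embeds into $\mathcal C_{\tau}$ and contains $\mathcal A$. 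This is exactly Condition~\eqref{thm:functor-precompact:ii}. (Note that here only properness of $h_X$ is used, not continuity.)

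For the converse, suppose $\tau'$ is a sub-functor with compact values and $\mathcal A\subseteq\mathcal C_{\tau'}$; I would show that $E':=\overline{\{h_X(a):(X,a)\in\mathcal A\}}$ is compact, which together with the definition immediately gives $h_X(a)\in E'$ for all $(X,a)\in\mathcal A$. Take any sequence, choosing for each term a representative $(X_n,a_n)$ with $a_n\in\tau'(X_n)$ (possible since the class lies in $\mathcal C_{\tau'}$). By Condition~\eqref{thm:functor-precompact:i} of Theorem~\ref{thm:functor-precompact}, the spaces $X_n$ form a pre-compact family in $\mcstarzero$, so after passing to a subsequence we may assume $X_n$ converges in the Gromov--Hausdorff metric; then, by the standard realization of Gromov--Hausdorff convergence inside a common compact space (see, e.g., \cite{bookBBI}), there is a compact metric space $Z$ and isometric embeddings $\iota_n:X_n\to Z$. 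Now $\tau_{\iota_n}(a_n)=\tau'_{\iota_n}(a_n)\in\tau'(Z)$, and $\tau'(Z)$ is compact, so along a further subsequence $\tau_{\iota_n}(a_n)\to b$ for some $b\in\tau'(Z)\subseteq\tau(Z)$. Compatibility gives $h_{X_n}(a_n)=h_Z(\tau_{\iota_n}(a_n))$, and continuity of $h_Z$ yields $h_{X_n}(a_n)\to h_Z(b)$ in $E$. Hence every sequence in $\{h_X(a):(X,a)\in\mathcal A\}$ has a subsequence converging in $E$, so $E'$ is sequentially compact and therefore compact.

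The only mildly delicate ingredient is the common-space realization of Gromov--Hausdorff convergence used in the converse direction, which I would quote rather than reprove; the rest is a sequence of direct checks. It is worth noting, as the statement's footnote anticipates, that properness of $h_X$ enters only in the first implication and continuity only in the second, and that the metrizability of $E$ is what lets the sequential extraction argument conclude compactness of $E'$.
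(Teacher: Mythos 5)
Your proof is correct. The first implication coincides with the paper's argument verbatim: set $\tau'(X):=h_X^{-1}(E')$, use properness for compactness and compatibility for functoriality. For the converse, however, you take a genuinely different (and somewhat more self-contained) route. The paper argues by contradiction and leans on the machinery it has already built: assuming Condition~\eqref{thm:functor-precompact:ii}, it invokes the $(\Leftarrow)$ direction of Theorem~\ref{thm:functor-precompact} to extract a limit $(X,a)\in\mathcal C_{\tau}$ of a subsequence of $(X_n,a_n)$, then applies Lemma~\ref{lem:common2} to realize this convergence in a common compact space $Z$ with $\tau_{f_n}(a_n)\to\tau_f(a)$, and concludes $h_{X_n}(a_n)\to h_X(a)$ by continuity, contradicting the assumed non-existence of $E'$. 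You instead bypass Theorem~\ref{thm:functor-precompact} entirely: from Condition~\eqref{thm:functor-precompact:i} and the common-space realization of Gromov--Hausdorff convergence you only need isometric embeddings $\iota_n:X_n\to Z$ into one compact $Z$, and then the compactness of $\tau'(Z)$ does the extraction, with compatibility and continuity of $h_Z$ transporting the convergence to $E$; sequential compactness of the closure of $\{h_X(a):(X,a)\in\mathcal A\}$ finishes the argument directly rather than by contradiction. What your route buys is independence from the harder ingredients behind the theorem's $(\Leftarrow)$ direction (in particular Condition~\eqref{rem:Hcont-weaker:1} of Remark~\ref{rem:Hcont-weaker} plays no role, and you never need the limit structure to descend to an element $\tau_f(a)$ with $a\in\tau(X)$); what the paper's route buys is brevity, since Theorem~\ref{thm:functor-precompact} and Lemma~\ref{lem:common2} are already available at that point. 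Your preliminary observations (isomorphism-invariance of $h_X(a)$ via $h_Y\circ\tau_f=h_X$, and the sequential characterization of pre-compactness in the metric space $E$) are exactly the points needed to make the direct argument airtight, and your parenthetical accounting of where properness versus continuity is used is accurate.
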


\subsection{Examples}
\label{subsec:examples}

The following are some examples for illustrating the framework of this section. Further examples are provided in Section~\ref{sec:special} and their connections to the literature are discussed therein.

\subsubsection{Basic Examples}

\begin{example}[Points, Compact Subsets and Measures]
	\label{ex:basic-compact-result}
	Let $\mathcal C$ be the space of compact metric spaces $X$ equipped with a point $p\in X$, a compact subset $K\subseteq X$ or a finite measure $\mu$ on $X$. In each case, the results of this section imply that $\mathcal C$ is a complete separable metric space. See Lemmas~\ref{lem:basic-pointwise} and~\ref{lem:basic-hausdorff}.
\end{example}

\begin{example}[Constant Functor] 
	\label{ex:constant}
	Let $\Xi$ be a fixed metric space. Assume $\tau(X)=\Xi$ for every $X$ and $\tau_f$ is the identity function on $\Xi$ for every $f$. Then, the metric~\eqref{eq:ghfunctor} becomes $\cgf\big((X,a),(Y,b)\big) = \gh\big(X,Y)\vee d(a,b)$. Hence, $\mathcal C_{\tau}=\mc\times \Xi$ with the max product metric (the case where $\Xi$ is a singleton can be interpreted as \textit{no additional structure}). 
	This trivial example is useful for Subsection~\ref{subsec:mark}.
\end{example}

\begin{example}[Basic Operations]
	Given two functors $\tau_1$ and $\tau_2$, the intersection $\tau_1(X)\cap \tau_2(X)$ and the union $\tau_1(X)\cup \tau_2(X)$ (with the maximal consistent metric) are also functors (if the metrics are compatible on the intersection). Also, the completion of $\tau_1(X)$ is a functor. The reader can verify that pointwise-continuity is inherited, and hence, \eqref{eq:ghfunctor} defines a metric.
\end{example}

\subsubsection{Functorial Subsets}
\label{subsec:subset}
\unwritten{
	\paragraph{Intersection.}\mar{Delete intersection and union. Only keep subsets}
	Let $\tau$ and $\tau'$ be two functors from $\mathfrak{Comp}$ to $\mathfrak{Met}$ such that for every $X$, the metrics on $\tau(X)$ and $\tau'(X)$ agree on $\tau(X)\cap \tau'(X)$. Assume also that for every isometric embedding $f:X\to Y$, the maps $\tau_f$ and $\tau'_f$ agree on $\tau(X)\cap \tau'(X)$. This allows one to define the intersection functor $\tau\cap \tau'$ naturally by $(\tau\cap \tau')(X):=\tau(X)\cap \tau'(X)$. It is immediate that pointwise-continuity and pointwise-Lipschitz properties of $\tau$ and $\tau'$ are inherited to $\tau\cap \tau'$. The same is true for Condition~\ref{rem:Hcont-weaker:1} of Remark~\ref{rem:Hcont-weaker}. But this does not necessarily hold for Hausdorff-continuity nor for Condition~\ref{rem:Hcont-weaker:2} of Remark~\ref{rem:Hcont-weaker}. 
	
	\paragraph{Union.}
	Under the same assumptions as above, one may regard the union $\tau(X)\cup\tau'(X)$ as a functor as well. To do this, one should first define a metric on $\tau(X)\cup\tau'(X)$ in a functorial way. This can be defined naturally if $\tau(X)$ and $\tau'(X)$ are subspaces of a common space $\tau''(X)$, where $\tau''$ is another functor. Otherwise, one may equip $\tau(X)\cup\tau'(X)$ with the maximum metric that is consistent with the two metrics: $d(a,a'):=\inf\{ d(a,y)+d(y,a'): y\in \tau(X)\cap \tau'(X)\}$ for $a\in\tau(X)$ and $a'\in \tau'(X)$ (which is an extended metric if the intersection is empty). It is straightforward to see that all of the continuity and Lipschitz properties of functors are inherited to the union.
}

In some examples where completeness does not hold, one would like that the space $\mathcal C_{\tau}$ is a Borel subset of some Polish space. In such examples, we will prove this property by 
considering a larger functor. 
Let $\tau'$ be another pointwise-continuous functor such that $\tau(X)$ is a metric subspace of $\tau'(X)$ for every $X$ and $\tau_f$ agrees with (the restriction of) $\tau'_f$ for every morphism $f:X\to Y$. It is clear that $\mathcal C_{\tau}$ is also a subspace of $\mathcal C_{\tau'}$, but the topological properties need further assumptions. Call $\tau$ a \defstyle{functorial subset} of $\tau'$ if for every isometric embedding $f:X\to Y$, one has $\mathrm{Im}(\tau_f)=\mathrm{Im}(\tau'_f)\cap \tau(Y)$; or equivalently, $\tau'\setminus\tau$ is a functor \new{(i.e., $\tau'_f$ maps $\tau'(X)\setminus\tau(X)$ into $\tau'(Y)\setminus\tau(Y)$)}.

\begin{lemma}
	\label{lem:subset}
	Let $\tau$ be a functorial subset of $\tau'$ such that $\tau(X)$ is a closed (resp. open) subset of $\tau'(X)$ for every $X$. Then, $\mathcal C_{\tau}$ is also a closed (resp. open) subset of $\mathcal C_{\tau'}$.
\end{lemma}
\begin{proof}
	Since $\tau'\setminus\tau$ is a functor \new{and $\mathcal C_{\tau'}$ is the disjoint union of $\mathcal C_{\tau}$ and $\mathcal C_{\tau'\setminus\tau}$,} it is enough to prove the claim when $\tau(X)$ is closed in $\tau'(X)$.
	Assume $(X_n,a'_n)\to (X,a)$, where $a'_n\in \tau(X_n)$ and $a\in \tau'(X)$. By Lemma~\ref{lem:common2}, there exists a common metric space $Z$ and isometric embeddings $f_n:X_n\to Z$ and $f:X\to Z$ such that $\hausdorff(\mathrm{Im}(f_n), \mathrm{Im}(f))\to 0$ and $\tau'_{f_n}(a'_n)\to \tau'_f(a)$. Since $\tau(Z)$ is closed and $\tau'_{f_n}(a'_n)\in \tau(Z)$, one gets $\tau'_f(a)\in \tau(Z)$. So, being a functorial subset implies that $\tau'_f(a)\in \mathrm{Im}(\tau_f)$, and the claim is proved.
\end{proof}

Now assume that $\tau(X)$ is a Borel subset of $\tau'(X)$ for every $X$. Does it imply that $\mathcal C_{\tau}$ is also a Borel subset of $\mathcal C_{\tau'}$? The answer is not known to the author. It seems that one needs to specify the Borel class in a functorial way as follows.

\begin{lemma}
	Assume $\tau(X)$ is a $G_{\delta}$ subset of $\tau'(X)$ for every $X$. In addition, assume there exists functorial subsets $\tau^{(1)}, \tau^{(2)}, \ldots$ of $\tau'$ such that each $\tau^{(i)}(X)$ is an open subset of $\tau'(X)$ for every $X$ and $\tau(X)=\bigcap_i \tau^{(i)}(X)$. Then, $\mathcal C_{\tau}$ is a $G_{\delta}$ subset of $\mathcal C_{\tau'}$.
\end{lemma}

This claim is immediately implied by the previous lemma \new{by noting that $\mathcal C_{\tau}=\bigcap_i \mathcal C_{\tau^{(i)}}$}. A similar statement can be given for other Borel classes like $F_{\sigma}, G_{\delta\sigma}, \ldots$.

\ali{
	In some examples, different metrics can be considered on $\tau(X)$; e.g., the sup metric between curves and the Hausdorff metric between the graphs of the curves (see Subsection~\ref{subsec:curves} below). The following lemma is useful for comparing the topologies.
	\begin{lemma}
		\label{lem:topology}
		Let $\tau$ and $\tau'$ be pointwise-continuous functors such that $\tau(X)\subseteq \tau'(X)$ for all $X$, $\tau_f$ is the restriction of $\tau'_f$ for every $f$ and the topology of $\tau(X)$ is coarser (resp. finer) than that of $\tau'(X)$ (restricted to $\tau(X)$). Then, $\mathcal C_{\tau}$ is a subset of $\mathcal C_{\tau'}$ and has a coarser (resp. finer) topology. In particular, in the coarser case, if $\mathcal C_{\tau'}$ is separable, then so is $\mathcal C_{\tau}$. 
	\end{lemma}
	
	The proof is similarly to that of Lemma~\ref{lem:subset} and is skipped.
}


\subsubsection{Multiple Additional Structures}
\label{subsec:product}
Let $\tau_1,\tau_2,\ldots,\tau_n$ be functors and $\tau(X):=\prod_i \tau_i(X)$. Then, $\mathcal C_{\tau}$ is the set of compact metric spaces $X$ equipped with a tuple $(a_1,\ldots,a_n)$ of $n$ additional structures such that $a_i\in \tau_i(X)$. Here, we prefer to equip $\tau(X)$ with the max product metric 
\begin{equation}
	\label{eq:maxproduct-finite}
	d\big((a_1,\ldots,a_n),(b_1,\ldots,b_n)\big):=\max_{i} d(a_i,b_i).
\end{equation}

This metric is more convenient for the Lipschitz properties (Lemma~\ref{lem:multiple2}) and for Strassen-type formulations (Subsection~\ref{subsec:strassen}). Similarly, for the product of countably many functors $\tau_1,\tau_2,\ldots$, the following metric is more convenient: 
\begin{equation}
	\label{eq:maxproduct}
	d\big((a_1,a_2,\ldots),(b_1,b_2,\ldots)\big):=\max_{i} \{\ali{2^{-i}\wedge d(a_i,b_i)}\}.
\end{equation}
Also, every isometric embedding $f:X\to Z$ induces a function from $\tau(X)$ to $\tau(Z)$ naturally, which is an isometric embedding. Hence, $\tau$ is a functor.



\begin{lemma}
	\label{lem:multiple2}
	Let\unwritten{\mar{Can we say if each is complete, then the product is complete?}} $\tau$ be the product of functors $\tau_1,\tau_2,\ldots$ defined above.
	\begin{enumerate}[label=(\roman*)]
		\item If $\tau_i$ is pointwise-continuous for every $i$, then so is $\tau$. A similar result holds for Hausdorff-continuity, the pointwise-$M$-Lipschitz property and the Hausdorff-M-Lipschitz property.
		\item If $\tau_i$ satisfies the assumptions of Theorem~\ref{thm:functor-polish} for every $i$, then $\mathcal C_{\tau}$ is complete and separable.
	\end{enumerate}
\end{lemma}

{The proof is straightforward and is left to the reader.}

For instance,~\cite{Mi09} defines a GH-type metric for compact metric spaces equipped with $k$ distinguished closed subsets. According to Lemma~\ref{lem:multiple2}, this is a special case of the framework of this section. Also,~\cite{AdBrGoMi17} considers the set of compact metric spaces equipped with $k$ distinguished points and $l$ finite Borel measures. It is claimed that this space is complete and separable. This is also implied by Lemma~\ref{lem:multiple2}. In fact, the metric in~\cite{AdBrGoMi17} differs from the metric of Lemma~\ref{lem:multiple2} up to a constant factor. This will be explained in Subsection~\ref{subsec:strassen}.


\subsubsection{Continuous Curves and Mappings}
\label{subsec:curves}
Let $\mathcal C$ be the space of compact metric spaces $X$ equipped with a continuous function $\eta:I\to X$, where $I$ is a given compact \ali{metric space}. 
\ali{If $I$ is an interval, then $\eta$ is a curve and the results of~\cite{GwMi17} show that $\mathcal C$ is a Polish space. Here, we represent it as an example of our framework and also generalize it to an arbitrary $I$. 
	See also Subsection~\ref{subsec:curvesnoncompact} (for non-compact $I$) and Subsection~\ref{subsec:ghpu}.
}

Let $\tau(X)$ be the set of continuous functions $\eta:I\to X$ equipped with the sup metric. Also, by letting $\tau_f(\eta):=f\circ \eta$, $\tau_f$ is an isometric embedding. Here, $\tau$ is a functor and $\mathcal C_{\tau}=\mathcal C$.

\ali{
	\begin{proposition}
		\label{prop:curves}
		The functor $\tau$, defined above, is pointwise-1-Lipschitz, satisfies Condition~\ref{rem:Hcont-weaker:1} of Remark~\ref{rem:Hcont-weaker}, but is not Hausdorff-continuous in general. Nevertheless, the space $\mathcal C_{\tau}$ of compact metric spaces $X$ equipped with a continuous function $\eta:I\to X$ is complete and separable.
	\end{proposition}
	
	Here, $\tau$ is not generally Hausdorff-continuous. For instance, if one approximates $X$ by finite metric spaces $X_1,X_2,\ldots$, then a continuous curve in $X$ cannot necessarily be approximated by continuous curves in $X_n$. By the same reason, condition~\ref{rem:Hcont-weaker:2} of Remark~\ref{rem:Hcont-weaker} does not necessarily hold. So separability is not implied by Theorem~\ref{thm:functor-polish}. In the case when $I$ is an interval, one can prove separability directly as in~\cite{GwMi17}, but the method of~\cite{GwMi17} does not work in the general case. Here we will prove it by using Theorem~\ref{thm:functor-polish} for a larger functor as in Lemma~\ref{lem:topology}. 
	
	\begin{proof}
		Assume $f,g:X\to Y$ are isometric embeddings. For any $\eta:I\to X$, one has $\dsup(f\circ \eta, g\circ \eta)\leq \dsup(f,g)$. This means that $\tau$ is pointwise-1-Lipschitz.
		%
		%
		Now, assume $f_n:X_n\to Z$ is an isometric embedding as in Remark~\ref{rem:Hcont-weaker} and $\hausdorff(\mathrm{Im}(f_n),\mathrm{Im}(f))\to 0$. Assume $\eta_n:I\to X_n$ is continuous and $f_n\circ\eta_n\to \eta'$, where $\eta':I\to Z$ is a continuous curve. For all $t\in I$, one obtains $\eta'(t)=\lim f_n(\eta_n(t))\in \mathrm{Im}(f)$. Hence, $\eta'=\tau_f(f^{-1}\circ \eta)\in \mathrm{Im}(\tau_f)$. This proves condition~\ref{rem:Hcont-weaker:1} of Remark~\ref{rem:Hcont-weaker}. Since $\tau(X)$ is complete for every $X$, Remark~\ref{rem:Hcont-weaker} implies that $\mathcal C_{\tau}$ is complete.
		For proving separability of $\mathcal C_{\tau}$, identify every continuous function $\eta:I\to X$ with its graph, which is a closed subset of $I\times X$. So $\tau(X)$ can be regarded as a subset of the set $\tau'(X)$ of closed subsets of $I\times X$ equipped with the Hausdorff metric. In Subsection~\ref{subsec:mark}, it will be shown that $\tau'$ is Hausdorff-continuous and $\mathcal C_{\tau'}$ is separable. 
		So, separability of $\mathcal C_{\tau}$ is implied by Lemma~\ref{lem:topology} and the fact that the topology on $\tau(X)$ is identical to that induced from $\tau'(X)$, which is left to the reader (use modulus of continuity).
	\end{proof}
	
	\unwritten{In the above proof, one can also use Lemma~\ref{lem:subset} to show that $\mathcal C_{\tau}$ is a $G_{\delta}$ subset of $\mathcal C_{\tau'}$ (which is already implied by the fact that $\mathcal C_{\tau}$ is Polish).}
	
}



\subsubsection{C\`adl\`ag Curves}
\label{subsec:cadlag}

	
	For all compact metric spaces $X$, let $\tau_0(X)$ be the set of c\`adl\`ag curves $\eta:I\to X$, where $I$ is a compact or non-compact interval (a c\`adl\`ag curve is a function that is right-continuous and has left-limits at all points). The space $\tau_{0}(X)$ is complete and separable under the \textit{Skorokhod metric} (Sections~12 and~16 of~\cite{bookBi99}).
	One can regard $\tau_0$ as a functor similarly to Subsection~\ref{subsec:curves}.
	\ali{In contrast with continuous functions, this example is Hausdorff continuous. More generally: 
		
		\begin{proposition}
			The functor $\tau_0$ of c\`adl\`ag curves is pointwise-1-Lipschitz and Hausdorff-$(1+\epsilon)$-Lipschitz for every $\epsilon>0$. So the set of compact metric spaces equipped with a c\`adl\`ag curve is complete and separable under the metric~\eqref{eq:ghfunctor}.
		\end{proposition}
		
		To prove this claim, assume $X,Y\subseteq Z$ and $\eta$ is a c\`adl\`ag curve in $X$. The reader can verify that there exists a c\`adl\`ag curve $\eta'$ in $Y$ such that $\dsup(\eta,\eta')\leq (1+\epsilon) \hausdorff(X,Y)$ (take a sufficiently fine net in $Y$ and let $\eta'$ be piece-wise constant). The details are skipped for brevity.
		
		This example will be discussed further in Subsections~\ref{subsec:cadlag2} and~\ref{subsec:cadlagprocess}.
	}
	

	\subsubsection{Composition of Functors}
	\label{subsec:compos}
	
	Assume $\rho$ and $\tau$ are functors from $\mathfrak{Comp}$ to $\mathfrak{Met}$ as in Subsection~\ref{subsec:GHf}. If $\rho(X)$ is compact for every $X$, then one can define the functor $\tau\circ\rho:\mathfrak{Comp}\to\mathfrak{Met}$ by $\tau\circ\rho(X):=\tau(\rho(X))$ and $(\tau\circ\rho)_f:=\tau_{(\rho_f)}$. 
	The following lemma is straightforward to prove.
	\begin{lemma}
		\label{lem:composition}
		If both $\rho$ and $\tau$ are pointwise-continuous (resp., Hausdorff-continuous), then so is $\tau\circ\rho$. A similar statement holds for the Lipschitz properties and the conditions in Remark~\ref{rem:Hcont-weaker}.
	\end{lemma}
	
	Therefore, if both $\rho$ and $\tau$ satisfy the assumptions of the results of this section, then so does $\tau\circ\rho$. In addition, it can be seen that if $\rho$ is Hausdorff-$M$-Lipschitz, then the map from $\mathcal C_{\tau\circ\rho}$ to $\mathcal C_{\tau}$ defined by $(X,a)\mapsto (\rho(X),a)$ \ali{is an $M$-Lipschitz function.}
	
	\ali{For example, composition is useful if one wants to study compact metric spaces $X$ equipped with a family of elements in $\rho(X)$; e.g., $k$ elements of $\rho(X)$, finitely many points in $\rho(X)$, a closed subset of $\rho(X)$ or a probability measure on $\rho(X)$ (i.e., a random element of $\rho(X)$). These are special cases of composition of functors by choosing $\tau$ suitably. 
		Various specific examples will be studied in Subsection~\ref{subsec:particle} and~\ref{subsec:closedProcess} together with their extension to boundedly-compact metric spaces (based on Example~\ref{ex:compose}).
		
		Some examples of $\rho$ in which $\rho(X)$ is always compact are the functors of closed subsets, probability measures and finite measures with total mass at most $M$, where $M$ is a constant. The same holds for \textit{marked measures} and \textit{marked closed subsets} (defined in the next subsection) if the mark space is compact. For boundedly-compact mark spaces, one can also let $\rho$ be the functor of \textit{marked closed subsets}. See the next subsection for more details.
	}
	%
	
	
	\begin{remark}
		\label{rem:composition}
		In some examples of composition, $\rho(X)$ is not compact. This is useful for the study of marks in the next subsection (Proposition~\ref{prop:mark-compact}) and for more general two-level measures (Subsection~\ref{subsec:two-level}). For this, one needs that $\tau$ is a functor $\tau:\mathfrak{Met}\to\mathfrak{Met}$. Such a functor can be defined similarly to Definition~\ref{def:functor}. Here, although~\eqref{eq:ghfunctor} is not well behaved (since the GH metric between noncompact metric spaces is too strict, is an extended metric and the corresponding space is not separable), but the definitions of pointwise-continuity, Hausdorff-continuity, the Lipschitz properties and the conditions of Remark~\ref{rem:Hcont-weaker} are still valid and are useful for composition of functors. These properties are inherited by the composition $\tau\circ\rho$ similarly to Lemma~\ref{lem:composition}.
	\end{remark}

	\subsubsection{Marks}
	\label{subsec:mark}
	
	Fix a {complete separable} metric space $\Xi$ as the \defstyle{mark space}. Heuristically, we consider metric spaces $X$ equipped with marks on points or marks on tuples of points. Due to measurability issues regarding $\Xi^X$, the following definitions are introduced.
	
	\del{In the following, we will consider metric spaces equipped with \textit{marks} (Examples~\ref{ex:marks} and~\ref{ex:marks2}). 
		This will be used in other examples as well, including curves (Subsection~\ref{subsec:curves}), \textit{spatial trees} (Subsection~\ref{subsec:spatialTree}) and in Subsection~\ref{subsec:spectralGH}.
		
		Fix a {complete separable} metric space $\Xi$ as the \defstyle{mark space} {(in some cases in what follows, $\Xi$ is required to be boundedly-compact).} 
		If $X$ is a finite or discrete set, a \textit{marking} of $X$ can be defined as a function from $X$ to $\Xi$ and the space of markings of $X$ is simply $\Xi^X$.
		However, in the general (non-discrete) case, $\Xi^X$ is not suitable to be regarded as a metric space and measure theoretic issues appear. Two candidates are provided here for defining markings: \textit{marked measures} and \textit{marked closed subsets}, defined below, which are inspired by the notion of \textit{marked random measures} in stochastic geometry (see Subsection~\ref{subsec:markedProcess}). 
		
		In some applications, it is convenient to assign marks to the $k$-tuples of points (see e.g., Subsections~\ref{subsec:graphs} and~\ref{subsec:spectralGH}). In the discrete case, the set $\Xi^{X^k}$ can be regarded as space of markings. This is also generalized in the following.
	}
	\begin{definition}
		\label{def:marking}
		Let $X$ be a compact metric space and $k\in \mathbb N$. A \defstyle{$k$-fold marked measure} on $X$ is a Borel measure on $X^k\times \Xi$. 
		Also, a \defstyle{$k$-fold marked closed (resp. compact) subset} of $X$ is a closed (resp. compact) subset of $X^k\times \Xi$. 
		The number $k$ is called the \defstyle{order} of the marked measure/closed subset.
	\end{definition}
	
	\del{In the discrete case, it is straightforward to see that both notions generalize the notion of markings (identify every function $f:X\to\Xi$ with its graph and the counting measure on its graph).}
	
	In the case $k=1$, this definition is inspired by the notion of \textit{marked random measures} in stochastic geometry (see Subsection~\ref{subsec:stochasticGeometry0}). Also, 1-fold marked measures are studied in~\cite{DeGrPf11} and~\cite{KlLo15markfunction}, but with a version of the Gromov-Prokhorov topology, which is weaker than the topology we are considering.
	
	\begin{definition}
		\label{def:mark-functor}
		Let $\tau^f(X)$ be the set of finite $k$-fold marked measures with the Prokhorov metric. Also, let $\tau^c(X)$ be the set of nonempty $k$-fold marked compact subsets of $X$ with the Hausdorff metric.
	\end{definition}
	
	Note that these functors contain the basic examples of Example~\ref{ex:basic} (points, measures and closed subsets) as functorial subsets. \ali{When $\Xi$ is not compact, one can also study marked closed subsets (not necessarily compact) and boundedly-finite marked measures, but the Hausdorff and Prokhorov metrics are no longer suitable. This will be studied in Example~\ref{ex:marks2} with another metric.}
	
	\begin{proposition}
		\label{prop:mark-compact}
		The functors $\tau^f$ and $\tau^c$ are pointwise-1-Lipschitz and also Hausdorff-1-Lipschitz. Hence, the spaces $\mathcal C_{\tau^{(f)}}$ and $\mathcal C_{\tau^{(c)}}$ are complete and separable.
	\end{proposition}
	
	%

	\begin{proof}
		First, assume the mark space $\Xi$ is compact. In this case, the functor $\tau^{(f)}$ is the composition of the functor $X\mapsto X^k\times \Xi$ (which is itself a product of the identity functor $X\mapsto X$ and the constant functor $X\mapsto\Xi$ of Subsection~\ref{subsec:subset}) with the functor of finite measures (Example~\ref{ex:basic}). So the claim is implied by Lemmas~\ref{lem:composition}, \ref{lem:basic-pointwise} and~\ref{lem:basic-hausdorff}. The proof for the functor $\tau^{(c)}$ is similar. In the case $\Xi$ is not compact, the above argument is still valid by Remark~\ref{rem:composition} (one can also prove the claim directly similarly to Lemmas~\ref{lem:basic-pointwise} and~\ref{lem:basic-hausdorff}).
	\end{proof}

	The following example motivates the names `marked measure' and `marked closed subset'. 
	
	\begin{example}[\ali{Simple Marks}]
		\label{ex:marking-function}
		If $\mu$ is a finite measure on $X$ and $f:X\to\Xi$ is a measurable function, then the push-forward of $\mu$ under the map $x\mapsto (x,f(x))$ is a finite marked measure on $X$. Similarly, if $C$ is a compact subset of $X$ and $f:X\to\Xi$ is a continuous map, then $\{(x,f(x)):x\in C\}$ is a marked compact subset of $X$. 
		This can be extended to $k$-fold marks similarly.
	\end{example}
	
	A compact space with a simple 1-fold marked measure is called a \textit{functionally marked metric measure space} in~\cite{KlLo15markfunction}. It is proved in~\cite{KlLo15markfunction} that the set of such spaces  is  Polish (with the GP-type topology). 
	By the tools presented here, we modify and shorten the proof of~\cite{KlLo15markfunction} to show the same with a stronger topology. 
	
	\begin{proposition}
		\label{prop:simplemarks}
		Let $\tau(X)$ be the set of simple $k$-fold marked closed subsets of $X$ (resp. simple $k$-fold marked measures on $X$). Then, $\tau$ is pointwise-continuous and $\mathcal C_{\tau}$ is a $G_{\delta}$ subspace of some Polish space, and hence, $\mathcal C_{\tau}$ is Polish itself.
	\end{proposition}

	\del{
		\begin{remark}
			\label{rem:marks-specialcase}
			Every point or closed subset of $X$ is a marked compact subset of $X$. {In addition, the metrics of Examples~\ref{ex:points} and~\ref{ex:closedSubsets} are identical with the corresponding restrictions of the metric on $\tau^{(c)}(X)$.} Similarly, measures are special cases of marked measures and the metrics are compatible.  
		\end{remark}
	}
	
	\begin{proof}
		We only prove the case $k=1$ for simplicity. Pointwise-continuity is straightforward. 
		For marked closed subsets, we use modulus of continuity. Given $m,n\in\mathbb N$, let $\tau^{(m,n)}(X)$ be the set of marked closed subsets $K\subseteq X\times \Xi$ such for every $(x,a),(y,b)\in K$, if $d(x,y)\leq 1/m$, then $d(a,b)<1/n$. It can be seen that $\tau^{(m,n)}(X)$ is open in $\tau^{(c)}(X)$\unwritten{ ()See the unwritten notes in OneNote)} and is a functorial subset. Also, the set of simple marked closed subsets is exactly $\cap_n\cup_m \tau^{(m,n)}(X)$. So, Lemma~\ref{lem:subset} implies that $\mathcal C_{\tau}$ 
		is a $G_{\delta}$ subset of the Polish space $\mathcal C_{\tau^{(c)}}$.
		
		For marked measures, we use the characterization of simple marked measures in~\cite{KlLo15markfunction} as follows. 
		In~\cite{KlLo15markfunction}, an explicit function $\beta:\tau^{(f)}(X)\to\mathbb R^{\geq 0}$ is constructed for every $X$ such that simple marked measures on $X$ are exactly $\beta^{-1}(0)=\tau^{(f)}(X)\setminus \bigcup_m \overline{\beta^{-1}(1/m,\infty)}$. See~(2.2) and~(2.5) of~\cite{KlLo15markfunction}. So, Lemma~\ref{lem:subset} can be used as above to deduce that $\mathcal C_{\tau}$ 
		is a $G_{\delta}$ subset of the Polish space $\mathcal C_{\tau^{f}}$, and hence, is Polish by itself. This completes the proof.
	\end{proof}
	

	
	
	Proposition~\ref{prop:functor-precompact} gives an explicit pre-compactness result for marks as follows.
	
	\begin{proposition}
		For $\tau=\tau^{(c)}$ or $\tau=\tau^{(f)}$, a subset $\mathcal A\subseteq \mathcal C_{\tau}$ is pre-compact if and only if Condition~\ref{thm:functor-precompact:i} of Theorem~\ref{thm:functor-precompact} holds and 
		\begin{enumerate}[label=(\roman*)]
			\item if $\tau=\tau^{(c)}$, there exists a compact subset $E\subseteq\Xi$ such that all elements $(X,K)\in\mathcal A$ satisfy $K\subseteq X^k\times E$.
			\item if $\tau=\tau^{(f)}$, there exists $M<\infty$ and compact subsets $E_1,E_2,\ldots\subseteq \Xi$ such that all elements $(X,\mu)\in A$ satisfy $||\mu||\leq M$ and $\forall i:\mu(X^k\times E_i^c)\leq 1/i$.
		\end{enumerate} 
	\end{proposition}
	\begin{proof}
		For the first part, let $h_X(K):=\pi(K)$, where $\pi:X^k\times \Xi\to \Xi$ is the projection map. Then, the claim is implied by Proposition~\ref{prop:functor-precompact}. For the second part, let $h_X(\mu):=(||\mu||, \del{\frac{1}{||\mu||}}\pi_*\mu)$ and the claim is implied by Proposition~\ref{prop:functor-precompact} and Prokhorov's theorem on tightness of measures.
	\end{proof}
	
	\del{	
		\begin{remark}
			\label{rem:functor-precompact2}
			For the functors $\tau^{(f)}$ and $\tau^{(c)}$, one can simplify the precompactness result (Theorem~\ref{thm:functor-precompact}) using Proposition~\ref{prop:functor-precompact}. If $\tau(X)=\tau^{(f)}(X)$ is the set of finite $k$-fold marked measures on $X$, then Condition~\ref{thm:functor-precompact:ii} in Theorem~\ref{thm:functor-precompact} is equivalent to the existence of $M<\infty$ such that all of the distinguished $k$-fold marked measures of the elements of $\mathcal A$ have total mass at most $M$. This extends Remark~\ref{rem:functor-precompact}. 
			\\
			Also, if $\tau(X)=\tau^{(c)}(X)$ is the set of $k$-fold marked compact subset of $X$, then Condition~\ref{thm:functor-precompact:ii} in the pre-compactness theorem is equivalent to the existence of a compact set $\Xi'\subseteq \Xi$ such that all elements $(X,a)\in\mathcal A$ satisfy $a\subseteq X^k\times \Xi'$. 
		\end{remark}
	}

	
	

	\subsection{Strassen-Type Formulations}
	\label{subsec:strassen}
	
	A \defstyle{correspondence} between metric spaces $X$ and $Y$ is a closed subset of $X\times Y$ such that, regarded as a relation between $X$ and $Y$, every point of $X$ corresponds to at least one point of $Y$ and vice versa. The \defstyle{distortion} of $R$ is defined by $\sup\{|d(y,y')-d(x,x')|: (x,y)\in R, (x',y')\in\mathbb R \}$. Let $C_{\epsilon}(X,Y)$ be the set of correspondences with distortion at most $2\epsilon$.
	
	Some generalization of the Gromov-Hausdorff metric in the literature are defined in terms of correspondences (instead of isometric embeddings) with a formula of the following type:
	\begin{equation}
		\label{eq:strassen-ghf}
		d\left((X,a),(Y,b) \right) := \min\{\epsilon\geq 0: \exists R\in \mathrm{C}_{\epsilon}(X,Y): p(X,Y,a,b,\epsilon,R)=1\},
	\end{equation}
	where $p$ is some criterion in terms of $X,Y,a,b,\epsilon,R$. For instance, the GH metric is obtained by $p\equiv 1$. Also, if $a$ and $b$ are probability measures, the GHP metric is obtained by the criterion of the existence of a coupling $\alpha$ of $a$ and $b$ such that $\alpha(R)\geq 1-\epsilon$. This is based on Strassen's theorem~\cite{St65} and is generalized to finite measures in~\cite{Kh19ghp} using approximate couplings. Formulas like~\eqref{eq:strassen-ghf} are sometimes easier to use than~\eqref{eq:ghfunctor} and are called \defstyle{Strassen-type formulations} here. 
	Although there is no isometric embedding in~\eqref{eq:strassen-ghf}, it seems that the existence of a Strassen-type formulation is a property of a functor which is hidden beyond:
	
	\begin{definition}
		A pointwise-continuous functor $\tau:\mathfrak{Comp}\to\mathfrak{Met}$ is called a \defstyle{Strassen-type functor} if for $a\in\tau(X)$, $b\in\tau(Y)$ and isometric embeddings $f:X\to Z$ and $g:Y\to Z$, whether or not $d(\tau_f(a),\tau_g(b))\leq\epsilon$ depends only on $X,Y,a,b,\epsilon$ and the set $R:=\{(x,y)\in X\times Y:d(f(x),g(y))\leq \epsilon\}$. In this case, let $p(X,Y,a,b,\epsilon,R)$ be a formula \new{which represents the indicator of $d(\tau_f(a),\tau_g(b))\leq\epsilon$ in terms of $(X,Y,a,b,\epsilon,R)$.}
	\end{definition}
	
	\unwritten{All of the examples given so far have Strassen-type formulations. Also, the tools below can be used to obtain Strassen-type formulations for various other instances of functors.}
	
	\begin{proposition}
		If $\tau$ is a Strassen-type functor, then the GHP-type metric $\cgf$ is equal to~\eqref{eq:strassen-ghf} and the minimum in~\eqref{eq:strassen-ghf} is attained.	
	\end{proposition}
	
	This result is immediate from the definition. \new{Indeed, the existence of a correspondence $R\in C_{\epsilon}(X,Y)$ is equivalent to $\gh(X,Y)\leq\epsilon$ and the other condition in~\eqref{eq:strassen-ghf} is analogous to the condition~$d(\tau_f(a),\tau_g(b))\leq\epsilon$ in~\eqref{eq:ghfunctor}.} This justifies the use of $\vee$ in the definition of the metric~\eqref{eq:ghfunctor} (one could also use $+$ and it would give an equivalent metric).
	
	Some trivial examples are points and closed subsets (Example~\ref{ex:basic}), continuous curves (Subsection~\ref{subsec:curves}), c\`adl\`ag curves (Subsection~\ref{subsec:cadlag}) and a constant functor (Subsection~\ref{subsec:subset}). For finite measures, two formulas $p$ can be given, one in terms of Strassen's theorem, mentioned above, and one by the original definition~\eqref{eq:prokhorov} of the Prokhorov metric. In contrast, Wasserstein metrics for finite measures (see e.g., Section~6 of~\cite{bookVi10}) give functors which are not Strassen-type (but are still pointwise-1-Lipschitz and Hausdorff-1-Lipschitz). 
	
	As another example, marked compact subsets and marked finite measures (Definition~\ref{def:mark-functor}) are also Strassen-type. This is implied by the following lemma and the fact that these functors are obtained by product and composition of simpler functors (see the proof of Proposition~\ref{prop:mark-compact}). See also Example~\ref{ex:marks2}.
	
	\begin{lemma}
		\label{lem:strassen-closed}
		Strassen-type functors are closed under product (with the max product metrics~\eqref{eq:maxproduct-finite} or~\eqref{eq:maxproduct}) and composition (Subsections~\ref{subsec:product} and~\ref{subsec:compos}).
	\end{lemma}
	The proof is straightforward and is skipped. This lemma justifies the use of the max product metric for products.
	
	\begin{example}
		Proposition~9 of~\cite{Mi09} provides a Strassen-type formulation for compact metric spaces equipped with $k$ closed subsets. This is a special case of products in Lemma~\ref{lem:strassen-closed}. Also, when the additional structure is a tuple of $k$ distinguished points and $l$ finite measures, \cite{AdBrGoMi17} defines a metric using a Strassen-type formula directly. After a minor modification that changes the metric up to a constant factor (changing a $\vee$ to $+$; see~\cite{Kh19ghp}), this is also a special case of products in Lemma~\ref{lem:strassen-closed}. Another example is given in Subsection~\ref{subsec:spatialTree}.
	\end{example}

	\subsection{Proofs and Auxiliary Lemmas}
	\label{subsec:proofs}
	
	In order to prove the main results of this section, we start by some lemmas.	
	
	\begin{lemma}
		\label{lem:embedding}
		For compact metric spaces $X$ and $Z$, the set of isometric embeddings $f:X\to Z$, equipped with the sup metric, is compact. In addition, the topology of this set is identical to the topology of pointwise convergence.
	\end{lemma}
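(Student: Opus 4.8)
The plan is to prove the two assertions — compactness of the space $\mathrm{Hom}(X,Z)$ of isometric embeddings in the sup metric, and the coincidence of the sup topology with the topology of pointwise convergence — in a way that lets the second fact do most of the work for the first. Write $F$ for the set of isometric embeddings $f:X\to Z$. First I would observe that every $f\in F$ is automatically $1$-Lipschitz (indeed distance-preserving), so $F$ is a uniformly equicontinuous family of maps from the compact space $X$ into the compact space $Z$. By the Arzelà–Ascoli theorem, the closure of $F$ in $C(X,Z)$ with the sup metric is compact. So the real content is to show that $F$ is \emph{closed} in $C(X,Z)$: if $f_n\to f$ uniformly with each $f_n$ an isometric embedding, then $f$ is an isometric embedding. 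This is immediate from passing to the limit in $d(f_n(x),f_n(x'))=d(x,x')$, using continuity of the metric on $Z$. Hence $F$ is compact in the sup metric.

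Next, the topology of pointwise convergence on $F$ is a priori coarser than the sup topology, since uniform convergence implies pointwise convergence; the identity map $(F,\dsup)\to(F,\text{pointwise})$ is therefore a continuous bijection. Since I will have already shown $(F,\dsup)$ is compact, and the pointwise-convergence topology on $F$ is Hausdorff (it is the subspace topology from the product space $Z^X$, which is Hausdorff as $Z$ is), a continuous bijection from a compact space to a Hausdorff space is a homeomorphism. This gives the "in addition" clause for free and avoids any direct $\epsilon$-$\delta$ comparison of the two topologies.

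The one step that needs a little care, and which I expect to be the main (mild) obstacle, is the equicontinuity input to Arzelà–Ascoli: one should make explicit that "isometric embedding" here means exactly $d(f(x),f(x'))=d(x,x')$ (as defined in the preliminaries), so that the family $F$ is uniformly equicontinuous with modulus of continuity $\omega(\delta)=\delta$, independent of $f$; this is what licenses both the precompactness and, together with the compactness of $X$, the fact that pointwise convergence on a dense subset of $X$ already forces uniform convergence. An alternative self-contained route to the homeomorphism claim, if one prefers not to invoke the compact-to-Hausdorff argument, is: given $f_n\to f$ pointwise and a dense countable set $D\subseteq X$, a diagonal argument plus equicontinuity shows $f_n\to f$ uniformly on $X$; but the compactness argument above subsumes this. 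I would present the Arzelà–Ascoli route as the main line and remark that closedness of $F$ is the only non-boilerplate verification.
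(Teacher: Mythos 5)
Your proof is correct. The paper itself skips this lemma as standard, so there is no argument to compare against; your route --- equicontinuity of the distance-preserving family plus Arzel\`a--Ascoli for precompactness, closedness of the set of isometric embeddings under uniform (indeed pointwise) limits, and then the continuous-bijection-from-compact-to-Hausdorff argument to identify the sup and pointwise topologies --- is exactly the standard proof the author presumably has in mind, and every step, including the observation that surjectivity is not required so distance preservation passes to the limit without further ado, checks out.
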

	\new{This lemma is quickly implied by the Arzel\`a-Ascoli theorem for compact metric spaces (see e.g., \cite{bookKe75generaltopology}). 
	}
	\unwritten{
		\begin{proof}
			{Assume $f_i:X\to Z$, $i\in\mathbb N$ are isometric embeddings. Let $I:=\{x_1,x_2,\ldots\}$ be a countable dense subset of $X$. By passing to subsequences inductively and a diagonal argument, one may assume that for each $j$, the limit $f(x_j):=\lim_i(f_i(x_j))\in Z$ exists. Since each $f_i$ is an isometric embedding, $f:I\to Z$ is also an isometric embedding. Therefore, since $I$ is dense, $f$ can be extended to an isometric embedding of $X$ into $Z$. It is now straightforward to check that $f_i$ converges to $f$ pointwise. To prove that $f_i\to f$ uniformly, let $\epsilon>0$ be arbitrary and $\{y_1,\ldots,y_k\}\subseteq X$ be an $(\epsilon/3)$-net in $X$ (i.e., its $(\epsilon/3)$-neighborhood covers the whole $X$). For large enough $n$ and all $1\leq j\leq k$, $d(f_n(y_j),f(y_j))<\epsilon/3$. It follows that if $d(y_j,x)<\epsilon/3$, then $d(f_n(x),f(x))<\epsilon$. This proves uniform convergence.}
		\end{proof}
	}
	\begin{lemma}
		\label{lem:compact-complete}
		\unwritten{Blaschke's theorem is the case where $Z$ is compact. Also, the lemma is known for closed subsets of $Z$ under $\hausdorff$ (Prop 7.3.7 of \cite{bookBBI})}
		If $Z$ is a complete metric space, then the set of compact subsets of $Z$, equipped with the Hausdorff metric, is complete.
	\end{lemma}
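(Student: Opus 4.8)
The plan is to show that the set $\mathcal K(Z)$ of compact subsets of a complete metric space $Z$, equipped with the Hausdorff metric $\hausdorff$, is complete by taking an arbitrary Cauchy sequence $(K_n)$ in $\mathcal K(Z)$ and exhibiting a limit. The natural candidate for the limit is
\[
K := \bigcap_{n\geq 1}\overline{\bigcup_{m\geq n} K_m},
\]
i.e.\ the set of all $z\in Z$ that are limits of sequences $z_{n_j}\in K_{n_j}$ with $n_j\to\infty$. First I would record that a Cauchy sequence in $\hausdorff$ is in particular \emph{uniformly bounded}: since $\hausdorff(K_n,K_N)\leq 1$ for $n\geq N$ (say), every $K_n$ with $n\geq N$ lies in the closed $1$-neighborhood of $K_N$, and $K_N$ being compact is bounded, so $\bigcup_{n} K_n$ is contained in some closed ball $\cball{r}{z_0}$.

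Next I would verify that $K$ is nonempty and compact. Nonemptiness follows by picking, for each $n$, a point $x_n\in K_n$ with $d(x_n,x_{n+1})\le \hausdorff(K_n,K_{n+1})$ plus a small slack (or by a diagonal argument): the Cauchy property of $(K_n)$ makes $(x_n)$ a Cauchy sequence in $Z$, hence convergent by completeness of $Z$, and its limit lies in $K$. For compactness I would show $K$ is closed (it is an intersection of closed sets) and totally bounded: given $\epsilon>0$, choose $N$ with $\hausdorff(K_n,K_m)<\epsilon/2$ for $n,m\geq N$; since $K_N$ is compact it has a finite $\epsilon/2$-net, and every point of $K$ is within $\epsilon/2$ of $\bigcup_{m\ge N}K_m$, hence within $\epsilon$ of that finite net. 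Being a closed and totally bounded subset of the complete space $Z$, $K$ is compact.

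Finally I would prove $\hausdorff(K_n,K)\to 0$. Fix $\epsilon>0$ and pick $N$ with $\hausdorff(K_n,K_m)<\epsilon$ for all $n,m\geq N$. For $n\geq N$: first, $K\subseteq \cnei{\epsilon}{K_n}$, because any $z\in K$ is a limit of points $z_{m_j}\in K_{m_j}$ with $m_j\geq n$, and each such point is within $\epsilon$ of $K_n$, so $z$ is within $\epsilon$ (or $\epsilon$ plus arbitrarily small slack, which one absorbs by using a strict inequality at stage $N$ and passing to a slightly larger constant) of $K_n$; second, $K_n\subseteq \cnei{\epsilon'}{K}$ for suitable $\epsilon'$ slightly larger than $\epsilon$, because given $x\in K_n$ one builds a Cauchy sequence $x=x_n, x_{n+1},x_{n+2},\dots$ with $x_m\in K_m$ and $d(x_m,x_{m+1})$ controlled by the Cauchy moduli, whose limit lies in $K$ and is close to $x$. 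Both inclusions hold for all $n$ beyond a suitable index, giving $\hausdorff(K_n,K)\to 0$, so $\mathcal K(Z)$ is complete.

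The main obstacle is the second inclusion in the last step, $K_n\subseteq\cnei{\epsilon'}{K}$: one has to actually construct, for a given point of $K_n$, a nearby point of the limit set $K$, which requires chaining together approximations across infinitely many of the $K_m$ and invoking completeness of $Z$ to get convergence — the bookkeeping of the error terms (telescoping a bound like $\sum_m \hausdorff(K_m,K_{m+1})$, or choosing the comparison indices so the total slack is $O(\epsilon)$) is where care is needed. Everything else is routine point-set topology, and the uniform boundedness observation is what makes the total-boundedness argument for compactness of $K$ go through cleanly.
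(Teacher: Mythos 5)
Your proof is correct, but it takes a genuinely different route from the paper's. The paper does not build the limit by hand: it invokes Proposition~7.3.7 of~\cite{bookBBI}, which already gives completeness of the family of \emph{closed} subsets of $Z$ under $\hausdorff$, to obtain a closed set $K$ with $\hausdorff(K_n,K)\to 0$, and the only thing it then proves is that this $K$ is compact — by sequential compactness: for a sequence $(x_j)$ in $K$ it picks approximants $y_{i,j}\in K_i$ with $d(y_{i,j},x_j)\le\hausdorff(K_i,K)$, makes each row $(y_{i,j})_j$ convergent by a diagonal extraction, and deduces that $(x_j)$ is Cauchy. You instead reprove the hyperspace completeness from scratch: you define $K=\bigcap_n\overline{\bigcup_{m\ge n}K_m}$, get compactness from closedness plus total boundedness (transporting a finite $\epsilon/2$-net of some $K_N$), and check both Hausdorff inclusions directly, the harder one, $K_n\subseteq\cnei{\epsilon'}{K}$, by chaining points through later $K_m$'s and using completeness of $Z$. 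Your version is self-contained (it absorbs the cited proposition) at the cost of the chaining bookkeeping; the paper's is shorter precisely because the citation does that work, and its compactness argument needs no total-boundedness step. One caution on your write-up: a greedy choice with $d(x_n,x_{n+1})\le\hausdorff(K_n,K_{n+1})+\mathrm{slack}$ does \emph{not} by itself make $(x_n)$ Cauchy, since the consecutive Hausdorff distances of a Cauchy sequence need not be summable; both for nonemptiness of $K$ and for $K_n\subseteq\cnei{\epsilon'}{K}$ you must, as you yourself indicate at the end, choose comparison indices along which the Hausdorff distances are geometrically controlled (total slack $O(\epsilon)$), rather than telescoping $\sum_m\hausdorff(K_m,K_{m+1})$, which may diverge. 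With that standard adjustment your argument goes through.
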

	\new{\begin{proof}[Sketch of the proof]
			By Proposition~7.3.7 of~\cite{bookBBI}, the set of closed subsets of $X$ is complete. It remains to show that if a sequence of compact subsets $C_n\subseteq X$ converges to a closed subset $C\subseteq X$, then $C$ is compact. Let $(x_i)_i$ be a sequence in $C$ and $\epsilon_n:=\hausdorff(C_n,C)$. By compactness of $C_1$ and passing to a subsequence, one may assume that $(x_i)_i$ eventually lie in a ball of radius $2\epsilon_1$. By doing this for $C_2,C_3,\ldots$ recursively and a diagonal argument, one finds a Cauchy subsequence, which is convergent in $C$. This finishes the proof.
		\end{proof}
	}
	\unwritten{
		\begin{proof}
			{It is known that the set of closed subsets of $Z$ is complete under the Hausdorff metric (see e.g., Proposition~7.3.7 of~\cite{bookBBI}). It is left to the reader to show that if a sequence of compact subsets converge to a closed subset, then the limit is also compact (see e.g., the arXiv version). This proves the claim.}
			\del{
				Let $K_1,K_2,\ldots$ be a Cauchy sequence of compact subsets of $Z$. By Proposition~7.3.7 of~\cite{bookBBI}, the set of closed subsets of $Z$ is complete under the Hausdorff metric. So there is a closed subset $K\subseteq Z$ such that $\epsilon_n:=\hausdorff(K_n,K)\to 0$. It will be shown that $K$ is compact. 
				
				Let $x_1,x_2,\ldots$ be a sequence in $K$. For each $i,j$, there exists $y_{i,j}\in K_i$ such that $d(y_{i,j},x_j)\leq \epsilon_i$. For each $i$, since $K_i$ is compact, the sequence $(y_{i,j})_j$ has a convergent subsequence. By a diagonal argument and passing to a subsequence, one can assume from the beginning that for each $i$, the whole sequence $(y_{i,j})_j$ is convergent. 
				It is shown below that $(x_j)_j$ is a Cauchy sequence. If so, completeness of $Z$ implies that $(x_j)_j$ is convergent and the claim is proved.
				
				Let $\delta>0$ be arbitrary. There exists $i$ such that $\epsilon_i<\delta/3$. Since $(y_{i,j})_j$ is Cauchy, there exists $N$ such that for all $j_1,j_2>N$, one has $d(y_{i,j_1},y_{i,j_2})<\delta/3$. It follows that for all $j_1,j_2>N$, one has
				\[
				d(x_{j_1},x_{j_2})\leq d(x_{j_1},y_{i,j_1})+d(y_{i,j_1},y_{i,j_2})+d(y_{i,j_2},x_{j_2})\leq \delta.
				\]
				This proves that $(x_j)_j$ is a Cauchy sequence and the claim is proved.
				%
			}
		\end{proof}
	}
	
	\begin{lemma}
		\label{lem:functor-union}
		In taking infimum in~\eqref{eq:ghfunctor}, one can add the condition $Z=f(X)\cup g(Y)$ and the value of the infimum is not changed.
	\end{lemma}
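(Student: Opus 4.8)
The plan is to prove both inequalities for the infimum. Write $I$ for the infimum in~\eqref{eq:ghfunctor} over all $Z,f,g$ and $I'$ for the restricted infimum where in addition $Z=f(X)\cup g(Y)$. Since every admissible triple for $I'$ is admissible for $I$, we trivially have $I\leq I'$, so the content is the reverse inequality $I'\leq I$.

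For $I'\leq I$, I would fix an arbitrary admissible triple $(Z,f,g)$ for $I$ and produce an admissible triple for $I'$ achieving the same (or an arbitrarily close) value. Let $Z_0:=f(X)\cup g(Y)\subseteq Z$, equipped with the restricted metric, and let $f_0:X\to Z_0$, $g_0:Y\to Z_0$ be the corestrictions of $f,g$; these are still isometric embeddings and $Z_0$ is compact (a finite union of compact sets). Let $\iota:Z_0\hookrightarrow Z$ be the inclusion isometric embedding, so $f=\iota\circ f_0$ and $g=\iota\circ g_0$. The key point is that $\hausdorff(f_0(X),g_0(Y))$ computed in $Z_0$ equals $\hausdorff(f(X),g(Y))$ computed in $Z$: the Hausdorff distance between two subsets only depends on the metric restricted to their union, which is exactly the metric of $Z_0$. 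For the $\tau$-term I invoke the functoriality axioms from Definition~\ref{def:functor}: $\tau_f=\tau_{\iota\circ f_0}=\tau_\iota\circ\tau_{f_0}$ and similarly $\tau_g=\tau_\iota\circ\tau_{g_0}$, and since $\tau_\iota$ is a morphism in $\mathfrak{Met}$, i.e.\ an isometric embedding of $\tau(Z_0)$ into $\tau(Z)$, it preserves distances: $d(\tau_f(a),\tau_g(b))=d(\tau_\iota(\tau_{f_0}(a)),\tau_\iota(\tau_{g_0}(b)))=d(\tau_{f_0}(a),\tau_{g_0}(b))$. Hence $\hausdorff(f_0(X),g_0(Y))\vee d(\tau_{f_0}(a),\tau_{g_0}(b))$ equals the value of the original triple $(Z,f,g)$, and since $(Z_0,f_0,g_0)$ is admissible for $I'$, we get $I'\leq \hausdorff(f(X),g(Y))\vee d(\tau_f(a),\tau_g(b))$. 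Taking the infimum over all $(Z,f,g)$ gives $I'\leq I$.

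There is essentially no hard obstacle here; the only thing to be careful about is that both defining properties of a functor are genuinely used (identity is not needed, but compatibility with composition, axiom~(i), is the crux), and that the Hausdorff distance is truly intrinsic to the union — which is immediate from its definition~\eqref{eq:hausdorff} since both the neighborhoods $\cnei{\epsilon}{\cdot}$ and the containments involved only reference points of $f(X)\cup g(Y)$. I would also remark that this reduction is what makes later set-theoretic issues (e.g.\ that $\mathcal C_\tau$ is a set, and that infima are over a controllable family) manageable, and that the same argument shows one may equivalently take the infimum over triples where $Z$ is any compact metric space containing isometric copies of $X$ and $Y$ whose union is all of $Z$.
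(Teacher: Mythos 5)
Your proof is correct and follows essentially the same route as the paper: restrict to $Z_0=f(X)\cup g(Y)$ with inclusion $\iota$, use the composition axiom to write $\tau_f=\tau_\iota\circ\tau_{f_0}$, $\tau_g=\tau_\iota\circ\tau_{g_0}$, and conclude via the facts that $\iota$ and $\tau_\iota$ are isometric embeddings, so both the Hausdorff term and the $\tau$-term are unchanged. Nothing is missing.
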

	
	\begin{proof}
		Let $Z,f,g$ be as in~\eqref{eq:ghfunctor}. Let $Z':=f(X)\cup g(Y)$ and $\iota:Z'\hookrightarrow Z$ be the inclusion map. Let $f'\in \mathrm{Hom}(X,Z')$ and $g'\in \mathrm{Hom}(Y,Z')$ be obtained by restrictions of $f$ and $g$ respectively. One has $\iota \circ f' = f$ and $\iota \circ g' = g$ as morphisms in $\mathfrak{Comp}$. 
		\ali{By the definition of functors (Definition~\ref{def:functor}),} $\tau_{\iota} \circ \tau_{f'} = \tau_{f}$ and $\tau_{\iota}\circ\tau_{g'}=\tau_g$. 
		Since $\tau_{\iota}$ is an isometric embedding (by \ali{Definition~\ref{def:functor}}), one gets that $d(\tau_f(a),\tau_g(b))= d(\tau_{f'}(a),\tau_{g'}(b))$. This proves the claim.
	\end{proof}

	\begin{lemma}
		\label{lem:common}
		Let $\correction{\mathcal X_n}=(X_n,a_n)$ be an object in $\mathfrak C_{\tau}$ and $\epsilon_n>0$ for $n=1,2,\ldots$ such that $\cgf\left(\mathcal X_n, \mathcal X_{n+1}\right)<\epsilon_n$ for each $n$. If $\sum \epsilon_n<\infty$, then there exists a compact set $Z$ and isometric embeddings $f_n:X_n\to Z$ such that for all $n$, 
		\begin{eqnarray}
			\label{eq:lem:common:1}
			\hausdorff\big(f_n(X_n),f_{n+1}(X_{n+1})\big) &\leq& \epsilon_n,\\ \label{eq:lem:common:2}
			d(\tau_{f_n}(a_n),\tau_{f_{n+1}}(a_{n+1}))&\leq& \epsilon_n.
		\end{eqnarray}
	\end{lemma}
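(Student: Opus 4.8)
\textbf{Proof plan for Lemma~\ref{lem:common}.}

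The plan is to build a single ambient space by repeatedly "gluing" the spaces $X_n$ together along the near-optimal intermediate spaces furnished by the definition of $\cgf$, and then to verify that the glued space is compact. First, for each $n$, since $\cgf(\mathcal X_n,\mathcal X_{n+1})<\epsilon_n$, pick by Lemma~\ref{lem:functor-union} a compact metric space $W_n$ with isometric embeddings $g_n:X_n\to W_n$ and $h_{n+1}:X_{n+1}\to W_n$ such that $W_n=g_n(X_n)\cup h_{n+1}(X_{n+1})$, $\hausdorff(g_n(X_n),h_{n+1}(X_{n+1}))\leq\epsilon_n$, and $d(\tau_{g_n}(a_n),\tau_{h_{n+1}}(a_{n+1}))\leq\epsilon_n$. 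Now glue $W_1,W_2,\ldots$ consecutively: form the disjoint union $\bigsqcup_n W_n$ and identify the copy of $X_{n+1}$ sitting inside $W_n$ (via $h_{n+1}$) with the copy of $X_{n+1}$ sitting inside $W_{n+1}$ (via $g_{n+1}$). Let $Z_0$ be the resulting set, equipped with the natural glued pseudmetric: the distance between two points is the infimum over all "chains" passing through the identified copies of the $X_k$'s of the sum of the within-$W_k$ distances. Standard gluing arguments (as in Section 3.1 of~\cite{bookBBI}) show this is a genuine metric (the identifications are along isometrically embedded closed subsets), that each $W_n$ embeds isometrically into $Z_0$, and hence each $X_n$ embeds isometrically into $Z_0$ via a map $f_n$ which makes the images of $X_n$ and $X_{n+1}$ coincide with $g_n(X_n)$ and $h_{n+1}(X_{n+1})$ inside the image of $W_n$. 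By construction this already gives~\eqref{eq:lem:common:1}, and since $\tau$ is a functor, $\tau_{f_n}(a_n)$ and $\tau_{f_{n+1}}(a_{n+1})$ are the images in $\tau(Z_0)$ of $\tau_{g_n}(a_n)$ and $\tau_{h_{n+1}}(a_{n+1})$ under an isometric embedding, so~\eqref{eq:lem:common:2} holds as well.

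It remains to arrange that the ambient space is \emph{compact}. The space $Z_0$ need not be compact (it is an increasing union of compact pieces whose diameters may not stay bounded a priori, though $\sum\epsilon_n<\infty$ controls the Hausdorff drift). The fix is to pass to the completion $Z:=\overline{Z_0}$ and to check $Z$ is compact by total boundedness. The key estimate is that $\hausdorff(f_n(X_n),f_m(X_m))\leq\sum_{k=n}^{m-1}\epsilon_k$ for $n<m$ (telescoping~\eqref{eq:lem:common:1} via the triangle inequality for $\hausdorff$), so all the images $f_n(X_n)$ lie within Hausdorff distance $\sum_{k\geq 1}\epsilon_k<\infty$ of $f_1(X_1)$; in particular $\bigcup_n f_n(X_n)$ is bounded. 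For total boundedness, fix $\delta>0$: choose $N$ with $\sum_{k\geq N}\epsilon_k<\delta/2$, so every $f_n(X_n)$ with $n\geq N$ lies in the $\delta/2$-neighborhood of $f_N(X_N)$; then a finite $\delta/2$-net of the compact set $\bigcup_{n\leq N}f_n(X_n)$ is a $\delta$-net for $\bigcup_n f_n(X_n)$, hence for its closure $Z$. Thus $Z$ is complete and totally bounded, so compact, and the embeddings $f_n:X_n\to Z_0\subseteq Z$ satisfy~\eqref{eq:lem:common:1}--\eqref{eq:lem:common:2}.

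The main obstacle is the gluing construction itself: one must be careful that the iterated identifications along the subsets $h_{n+1}(X_{n+1})=g_{n+1}(X_{n+1})$ produce a metric (not merely a pseudometric that collapses points) and that each $W_n$, and hence each $X_n$, embeds isometrically — i.e., that gluing does not shrink distances within a single $W_n$. This is a routine but slightly delicate verification; it follows from the general principle that gluing metric spaces along isometric copies of a common closed subset preserves the original metrics on each piece (see the gluing lemma in~\cite{bookBBI}, Section 3.1), applied inductively, together with a limiting argument to pass from finite gluings to the countable one. Everything else — the telescoping Hausdorff bound, total boundedness, and the transfer of~\eqref{eq:lem:common:2} through the functor $\tau$ — is straightforward.
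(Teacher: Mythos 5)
Your proposal is correct and follows essentially the same route as the paper: glue the near-optimal common spaces $W_n$ consecutively along the identified copies of $X_{n+1}$, pass to the completion, use $\sum\epsilon_n<\infty$ to obtain compactness, and transfer~\eqref{eq:lem:common:2} through the isometric embeddings of the pieces into the glued space via the functor. The only cosmetic differences are that the paper gets compactness from Lemma~\ref{lem:compact-complete} applied to the Hausdorff-convergent partial gluings rather than by your direct total-boundedness argument, and that $\tau$ should be applied with target the compact completion $Z$ rather than $Z_0$ (which need not be compact, so $\tau(Z_0)$ is not defined) --- both inessential.
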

	
	\begin{proof}
		By~\eqref{eq:ghfunctor}, there exists a compact metric space $Z_n$ for every $n$ and isometric embeddings $g_n:X_n\to Z_n$ and $h_n:X_{n+1}\to Z_n$ such that 
		\begin{eqnarray}
			\label{eq:lem:common:3} \hausdorff\big(g_n(X_n),h_n(X_{n+1}) \big)&\leq &  \epsilon_n,\\ 
			\label{eq:lem:common:4} d(\tau_{g_n}(a_n),\tau_{h_n}(a_{n+1})) &\leq& \epsilon_n.
		\end{eqnarray}
		By Lemma~\ref{lem:functor-union}, one can assume $Z_n=g_n(X_n)\cup h_n(X_{n+1})$ without loss of generality.
		
		{
			\[ 
			\begin{tikzcd}
				X_1 \arrow{d}{g_1} & X_2\arrow{dl}{h_1} \arrow{d}{g_2} & X_3\arrow{dl}{h_2} \arrow{d}{g_3}&\cdots \arrow{dl}{}\\
				Z_1\arrow[d,"\iota_1"'] & Z_2\arrow[dl,"\iota_2"'] & Z_3\arrow{dll}{\iota_3} & \cdots\\
				Z&&&
				%
			\end{tikzcd}
			\]
		}
		Let $Z_{\infty}$ be the gluing of the spaces $Z_n$; i.e., the quotient of the disjoint union $\sqcup_n Z_n$ by identifying $h_n(x)\in Z_n$ with $g_{n+1}(x)\in Z_{n+1}$ for every $n$ and every $x\in X_{n+1}$. 
		By considering the quotient metric on $Z_{\infty}$, the natural map from $Z_n$ to $Z_{\infty}$ is an isometric embedding for each $n$.%
		\del{The quotient metric on $Z_{\infty}$ can be described as follows:
			for $z_i\in Z_i$ and $z_j\in Z_j$, if $i\leq j$, the distance between $z_i$ and $z_j$ is the \textit{length} of the shortest path of the form $(z_i,h_i(x_{i+1}), h_{i+1}(x_{i+2}),\ldots,h_{j-1}(x_j), z_j)$, where $\forall k: x_k\in Z_k$ and the distance between consecutive pairs in this path are considered under the metrics of $Z_i,Z_{i+1},\ldots,Z_j$ respectively (note that $h_k(x_{k+1})$ is an element of $Z_k$ and is identified with the element $g_{k+1}(x_{k+1})$ of $Z_{k+1}$). It is straightforward that this gives a metric on $Z_{\infty}$ and the natural map from $Z_n$ to $Z_{\infty}$ is an isometric embedding for each $n$ (see Lemma~5.7 in~\cite{GrPfWi09}). 
		}
		%
		Let ${Z}$ be the metric completion of $Z_{\infty}$ and $\tilde{Z}_n$ be the quotient of $Z_1\sqcup\cdots\sqcup Z_n$ defined similarly. We may regard $\tilde{Z}_n$ as a subset of $Z$, which gives $Z_{\infty}=\cup_n\tilde{Z}_n$. Inequality~\eqref{eq:lem:common:3} implies that $\hausdorff(\tilde{Z}_n,\tilde{Z}_{n+1})\leq\epsilon_n$. So the assumption $\sum \epsilon_n<\infty$ implies that $\hausdorff(\tilde{Z}_n,{Z})$ is finite and tends to zero as $n\to \infty$. Since $Z$ is complete and each $\tilde{Z}_n$ is compact, Lemma~\ref{lem:compact-complete} implies that $Z$ is compact. 
		
		Let $\iota_n:Z_n\to {Z}$ be the natural isometric embedding and $f_n:=\iota_n\circ g_n$.
		%
		Since $\iota_n$ is an isometric embedding and $\iota_n\left(h_{n}(X_{n+1})\right)= \iota_{n+1}\left(g_{n+1}(X_{n+1})\right)$, \eqref{eq:lem:common:3} implies that $\hausdorff\big(f_n(X_n),f_{n+1}(X_{n+1})\big) \leq \epsilon_n$, which proves  \eqref{eq:lem:common:1}. Similarly, since $\tau_{\iota_n}$ is an isometric embedding, \eqref{eq:lem:common:4} implies \eqref{eq:lem:common:2}. So the claim is proved.
	\end{proof}
	
	\begin{remark}
		Lemma~\ref{lem:common} is similar to	 Lemma~5.7 in~\cite{GrPfWi09}. The latter is for metric measure spaces and does not assume $\sum \epsilon_n<\infty$. So the metric space $Z$ is not necessarily compact therein.
		\del{
			Also, a similar statement holds for every sequence of compact metric spaces $(X_n)_n$ without any additional structure (by deleting $a_n$ and $a$ in Lemma~\ref{lem:common} and by replacing $\cgf$ with $\cgh$). This claim is implied by Lemma~\ref{lem:common} by considering the functor $\tau(X)=\{0\}$ for all $X$.}
	\end{remark}
	
	\begin{proof}[Proof of Lemma~\ref{lem:common2}]
		The proof is similar to that of Lemma~\ref{lem:common} and is only sketched here. 
		Let $\epsilon_n>\cgf\big((X_n,a_n),(X,a) \big)$ be such that $\epsilon_n\to 0$.
		Embed $X_n$ and $X$ in a common space $Z_n$ as in~\eqref{eq:ghfunctor}. Then, let $Z$ be the gluing all of $Z_1,Z_2,\ldots$ along the copies of $X$ in all of the sets $Z_n$. It can be proved similarly to Lemma~\ref{lem:common} that $Z$ is compact and can be used as the desired space.
	\end{proof}

	We are now ready to prove the theorems.
	
	\begin{proof}[Proof of Theorem~\ref{thm:functor-metric}]
		It is clear that $\cgf$ is symmetric. For the triangle inequality, let $\mathcal X=(X,a)$, $\mathcal Y=(Y,a)$ and $\mathcal Z=(Z,c)$ be elements of $\mathfrak{C}_{\tau}$. Assume $\cgf(\mathcal X,\mathcal Y)<\epsilon$ and $\cgf(\mathcal Y,\mathcal Z)<\delta$. It is enough to prove that $\cgf(\mathcal X,\mathcal Z)\leq \epsilon+\delta$.
		By Lemma~\ref{lem:common}, there exists a compact metric space $H$ and isometric embeddings $f_X:X\to H$, $f_Y:Y\to H$ and $f_Z:Z\to H$ such that
		\begin{eqnarray*}
			\hausdorff(f_X(X),f_Y(Y))\leq \epsilon, &&
			d(\tau_{f_X}(a), \tau_{f_Y}(b)) \leq  \epsilon,\\
			\hausdorff(f_Y(Y),f_Z(Z))\leq \delta, &&
			d(\tau_{f_Y}(b), \tau_{f_Z}(c)) \leq  \delta.
		\end{eqnarray*}
		These inequalities imply that $\hausdorff(f_X(X),f_Z(Z))\leq \epsilon+\delta$ and $d(\tau_{f_X}(a), \tau_{f_Z}(c))\leq \epsilon+\delta$. So the definition~\eqref{eq:ghfunctor} implies that $\cgf(\mathcal X,\mathcal Z)\leq \epsilon+\delta$ and the triangle inequality is proved.
		
		\ali{Now, let $\mathcal X=(X,a)$ and $\mathcal Y=(Y,b)$ be such that $\cgf(\mathcal X,\mathcal Y)=0$. Here, we use the fact that the infimum in~\eqref{eq:ghfunctor} is attained (this will be proved below and there is no circular argument). So, there exists a compact metric space $Z$ and isometric embeddings $f:X\to Z$ and $g:Y\to Z$ such that 
			$f(X)=g(Y)$ and $\tau_{f}(a)=\tau_{g}(b)$. Therefore, the function $h:=g^{-1}\circ f$ is an isometry between $X$ and $Y$ and $\tau_{h}(a)=b$. Hence, $(X,a)$ is isomorphic to $(Y,b)$. So, it is proved that $\cgf$ is a metric.}
		
		\ali{It only remains to show that the infimum in~\eqref{eq:ghfunctor} is attained.}
		Let $\epsilon:=\cgf(\mathcal X, \mathcal Y)$. By~\eqref{eq:ghfunctor}, for every $n>0$, there exists a compact metric space $Z_n$ and isometric embeddings $f_n:X\to Z_n$ and $g_n:Y\to Z_n$ such that
		\[
		\hausdorff\big(f_n(X),g_n(Y) \big)\vee d(\tau_{f_n}(a),\tau_{g_n}(b)) \leq \epsilon + \frac 1 n.
		\]
		Also, by Lemma~\ref{lem:functor-union}, one can assume $f_n(X)\cup g_n(Y)=Z_n$. This implies that $\diam(Z_n)\leq \diam(X)+2\epsilon+2/n$, which is uniformly bounded. So, by Theorem~7.4.15 of~\cite{bookBBI}, one can show that the sequence $(Z_n)_n$ is pre-compact under the metric $\cgh$ \new{(note that $Z_n$ is a union of two copies of $X$ and $Y$, and hence, can be covered by a uniformly bounded number of balls of a given radius)}. Therefore, by taking a subsequence if necessary, we can assume $\cgh(Z_n,K)\leq 2^{-n}$ for some $K$ without loss of generality. By Lemma~\ref{lem:common}, there exists a compact metric space $Z$ and isometric embeddings $h_n:Z_n\to Z$ (this is all we need from Lemma~\ref{lem:common}). 
		By Lemma~\ref{lem:embedding} \ali{and passing to a subsequence},
		one can assume $h_n\circ f_n\to f$ and $h_n\circ g_n\to g$ uniformly for some isometric embeddings $f:X\to Z$ and $g:Y\to Z$ without loss of generality. Since $\hausdorff(\mathrm{Im}(h_n\circ f_n), \mathrm{Im}(h_n\circ g_n))\leq \epsilon+\frac 1 n$, one can show that $\hausdorff(\mathrm{Im}(f), \mathrm{Im}(g))\leq \epsilon$. Moreover, by pointwise-continuity of $\tau$, one gets that $\tau_{h_n\circ f_n}(a) \to \tau_f(a)$ and $\tau_{h_n\circ g_n}(b)\to \tau_g(b)$. It follows similarly that $d(\tau_f(a),\tau_g(b))\leq \epsilon$. Now $(Z,f,g)$ satisfy the claim and the claim is proved.
		%
	\end{proof}

	\begin{proof}[Proof of Lemma~\ref{lem:functor-proper}]
		By the definition of $\cgf$ in~\eqref{eq:ghfunctor}, one directly obtains that
		$
		\cgf\big((X,a_1),(X,a_2) \big)\leq d(a_1,a_2).
		$
		This implies that the map is continuous (and also 1-Lipschitz). 
		To prove properness of the map, 
		let $K\subseteq\mathcal C_{\tau}$ be a compact set and $a_1,a_2,\ldots \in \tau(X)$ be such that $(X,a_n)\in K$. To show the compactness of the inverse image of $K$, it is enough to show that the sequence $(a_n)_n$ has a convergent subsequence (note that by continuity, the inverse image of $K$ is closed). 
		Since $K$ is compact, by taking a subsequence, we may assume $(X,a_n)\to (Y,b)$, where $(Y,b)\in K$. It follows that $Y$ is isometric to $X$. So there exists $c\in X$ such that $(X,c)$ is equivalent to $(Y,b)$ as elements of $\mathcal C_{\tau}$. So $(X,a_n)\to (X,c)\in K$. By Lemma~\ref{lem:common2}, 
		there exists a compact metric space $Z$ and isometric embeddings $f_n:X\to Z$ and $f:X\to Z$ such that $f_n(X)\to f(X)$ and $\tau_{f_n}(a_n)\to \tau_f(c)$. 
		By Lemma~\ref{lem:embedding} and
		passing to a subsequence, we may assume there exists $g:X\to Z$ such that $\dsup(f_n,g)\to 0$. This implies that $f_n(X)\to g(X)$, and hence, $f(X)=g(X)$. So there exists an isometry $h:X\to X$ such that $f=g	 \circ h$. Let $a:=\tau_h(c)$.
		Pointwise-continuity and $f_n\to g$ implies that $\tau_{f_n}(a)\to \tau_g(a) = \tau_f(c) = \lim_n \tau_{f_n}(a_n)$. 
		So $d(\tau_{f_n}(a_n),\tau_{f_n}(a))\to 0$. Since $\tau_{f_n}$ is an isometry, one gets that $d(a_n,a)\to 0$; i.e., $a_n\to a$. This completes the proof.
		%
	\end{proof}

	\begin{proof}[Proof of Theorem~\ref{thm:functor-polish}]
		\new{Assume $\mathcal C_{\tau}$ is complete. Let $(a_n)_n$ be a Cauchy sequence in $\tau(X)$. By Lemma~\ref{lem:functor-proper}, $(X,a_n)$ is also a Cauchy sequence in $\mathcal C_{\tau}$, and hence, is convergent. We just proved in the proof of Lemma~\ref{lem:functor-proper} that $(a_n)_n$ has a convergent subsequence, which implies that the whole sequence is convergent. So $\tau(X)$ is complete.
			
			Assume $\mathcal C_{\tau}$ is separable. So the image of $\tau(X)$ in $\mathcal C_{\tau}$ is also separable. Let $\{(X,a_n):n\in\mathbb N\}$ be a countable dense set in this image. Let $G$ be a countable dense subset of the isometry group of $X$ and let $S:=\{g(a_n): g\in G, n\in\mathbb N\}$. We claim that $S$ is dense in $\tau(X)$. Indeed, for every $c\in\tau(X)$, there exists a subsequence $(X,a_{n_i})\to (X,c)$. By the second assertion of Lemma~\ref{lem:functor-proper}, there exists an isometry $h:X\to X$ such that $h^{-1}(a_{n_i})\to c$. Now the claim follows by approximating $h^{-1}$ by the elements of $G$.}
		
		Now, we prove the `if' parts.
		Let $(X_n,a_n)$ be a sequence of elements of $\mathfrak C_{\tau}$ such that the corresponding elements in $\mathcal C_{\tau}$ form a Cauchy sequence. By taking a subsequence (if necessary) and using Lemma~\ref{lem:common}, one can assume there exists a compact metric space $Z$ and isometric embeddings $f_n:X_n\to Z$ such that
		\begin{eqnarray}
			\label{eq:lem:funcomplete:1}
			\hausdorff\big(f_n(X_n),f_{n+1}(X_{n+1})\big) &\leq& 2^{-n},\\ \label{eq:lem:funcomplete:2}
			d(\tau_{f_n}(a_n),\tau_{f_{n+1}}(a_{n+1}))&\leq& 2^{-n}.
		\end{eqnarray}
		So the sequences $(f_n(X_n))_n$ and $(\tau_{f_n}(a_n))_n$ are Cauchy. Lemma~\ref{lem:compact-complete} and the assumption of completeness of $\tau(Z)$ imply that there exists a compact subset $X\subseteq Z$ and $b\in \tau(Z)$ such that $f_n(X_n)\to X$ and $\tau_{f_n}(a_n)\to b$. Let $\iota: X\hookrightarrow Z$ be the inclusion map. The definition of Hausdorff-continuity implies that $\hausdorff(\mathrm{Im}(\tau_{f_n}), \mathrm{Im}(\tau_{\iota}))\to 0$. This implies that $b$ is in the closure of $\mathrm{Im}(\tau_{\iota})$. On the other hand, since $\tau(X)$ is complete (by assumption) and $\tau_{\iota}$ is an isometric embedding, one gets that $\mathrm{Im}(\tau_{\iota})$ is also complete, and hence, closed in $\tau(Z)$. So $b\in \mathrm{Im}(\tau_{\iota})$; i.e., there exists $a\in \tau(X)$ such that $\tau_{\iota}(a)=b$. Now, one can obtain that $\cgf\left((X_n,a_n),(X,a) \right)\to 0$. This proves that $\mathcal C_{\tau}$ is complete.
		
		Now assume that $\tau(X)$ is separable for every $X$.
		Let $A$ be a sequence of compact metric spaces which is dense in $\mc$. By assumption, for every $X\in A$, there exists a countable dense subset $C(X)$ of $\tau(X)$. It is enough to prove that the set $E:=\{(X,a): X\in A, a\in C(X) \}$ is dense in $\mathcal C_{\tau}$.
		Let $(X,a)\in\mathfrak{C}_{\tau}$ be arbitrary. For every $n>0$, there exists $X_n\in A$ such that $\cgh(X_n,X)\leq 2^{-n}$. By Lemma~\ref{lem:common2},  there exists a compact metric space $Z$ and isometric embeddings $f:X\to Z$ and $f_n:X_n\to Z$ such that $\hausdorff(f(X),f_n(X_n))\to 0$. The assumption of Hausdorff-continuity of $\tau$ implies that $\hausdorff\big(\mathrm{Im}(\tau_{f_n}),\mathrm{Im}(\tau_f)\big)\to 0$. So one can select an element $a_n\in \tau(X_n)$ for each $n$ such that $d(\tau_{f_n}(a_n),\tau_f(a))\to 0$. Since $C(X_n)$ is dense in $\tau(X_n)$, one can choose $a_n$ such that $a_n\in C(X_n)$. This implies that $\cgf((X_n,a_n),(X,a))\to 0$ and the claim is proved.
	\end{proof}

	\begin{proof}[Proof of Theorem~\ref{thm:functor-precompact}]
		($\Leftarrow$). Assume~\ref{thm:functor-precompact:i} and~\ref{thm:functor-precompact:ii} hold. Let $(X_n,a_n)$ be a sequence in $A$. We should prove that it has a convergent subsequence in $\mathcal C_{\tau}$. By~\ref{thm:functor-precompact:i}, one can assume  $X_n\to X$ for some compact metric space $X$ without loss of generality. 
		So Lemma~\ref{lem:common2} implies that there exists a compact metric space $Z$ and isometric embeddings $f_n:X_n\to Z$ and $f:X\to Z$ such that $f_n(X_n)\to f(X)$. By assumption, $a_n\in \tau'(X_n)$. Since $\tau'$ is a functor, one gets $\tau_{f_n}(a_n)\in \tau'(Z)$. Since the latter is compact, by passing to a subsequence, one can assume that there exists $b\in \tau'(Z)$ such that $\tau_{f_n}(a_n)\to b$. Similarly to the proof of completeness in Theorem~\ref{thm:functor-polish}, one can show that there exists $a\in \tau'(X)$ such that $\tau_f(a)=b$\del{ (repetition of the arguments is skipped for brevity)}. Now, it can be seen that $(X_n,a_n)\to (X,b)$ and the claim is proved. 
		
		($\Rightarrow$). Assume $\mathcal A$ is pre-compact. The claim of~\ref{thm:functor-precompact:i} is straightforward. To prove~\ref{thm:functor-precompact:ii}, for every compact metric space $X$, define
		\[
		\tau_0(X):= \{\tau_f(b): (Y,b)\in\mathcal A \text{ and } f\in \mathrm{Hom}(Y,X) \}\subseteq \tau(X).
		\]
		Let $\tau'(X)$ be the closure of $\tau_0(X)$ in $\tau(X)$. It is straightforward that $\tau_0$ and $\tau'$ are functors and $\mathcal C_{\tau'}\supseteq\mathcal A$. So it is enough to show that $\tau'(X)$ is compact for  every $X$; i.e., $\tau_0(X)$ is pre-compact in $\tau(X)$. Let $a_1,a_2,\ldots\in \tau_0(X)$. We should prove that it has a convergent subsequence. By the definition of $\tau_0$, for every $n$ there exists $(Y_n,b_n)\in \mathcal A$ and $f_n:Y_n\to X$ such that $\tau_{f_n}(b_n)=a_n$. Since $\mathcal A$ is pre-compact, we can assume that $(Y,b):=\lim_n (Y_n,b_n)$ exists. 
		By Lemma~\ref{lem:common2}, there exists a compact metric space $Z$, $g_n:Y_n\to Z$ and $g:Y\to Z$ such that 
		\begin{eqnarray}
			\label{eq:thm:functor-precompact:1}
			g_n(Y_n)&\to& g(Y),\\
			\label{eq:thm:functor-precompact:2}
			\tau_{g_n}(b_n)&\to& \tau_g(b).
		\end{eqnarray}
		For each $n$, let $K_n$ be the gluing of $X$ and $Z$ along the two copies of $Y_n$. 
		Note that $\diam(K_n)\leq \diam(X)+\diam(Z)$. So the diameters of $K_n$ are uniformly bounded. By using Theorem~7.4.15 of~\cite{bookBBI}, one can show that the sequence $(K_n)_{n}$ is pre-compact under the metric $\cgh$. So, by taking a subsequence, we may assume that $(K_n)_n$ is convergent under $\cgh$. 
		So, by Lemma~\ref{lem:common2}, 
		all of $K_1,K_2,\ldots$ are isometrically embeddable into a common compact metric space $H$ such that their images in $H$ are convergent. 
		{
			\[ 
			\begin{tikzcd}
				Y_n \arrow{r}{f_n}  \arrow{rd}{g_n} & X \arrow{rd}{} & &\\
				Y\arrow{r}{g}&Z \arrow{r} & K_n\arrow{r}{} & H
			\end{tikzcd}
			\]
		}
		By composing the isometric embeddings{ (see the above diagram)}, 
		one finds isometric embeddings $h_n:X\to H$ and $\iota_n:Z\to H$ (for each $n$) such that $h_n\circ f_n = \iota_n\circ g_n$ for each $n$ (in fact, one may do this such that the maps $\iota_n$ are equal). {See the diagram below.} 
		By Lemma~\ref{lem:embedding} \ali{and passing to a subsequence},
		we may assume there exist isometric embeddings $h:X\to H$ and $\iota:Z\to H$ such that $\dsup(h_n,h)\to 0$ and $\dsup(\iota_n,\iota)\to 0$. Equation~\eqref{eq:thm:functor-precompact:1} implies that ${\iota_n}({g_n}(Y_n))\to {\iota}(g(Y))$. On the other hand, ${\iota_n}({g_n}(Y_n)) = {h_n}({f_n}(Y_n))\subseteq h_n(X)$. These facts imply that ${\iota}(g(Y))\subseteq h(X)$. It follows that there exists an isometric embedding $f:Y\to X$ such that $h\circ f = \iota \circ g$. 
		{
			\[ 
			\begin{tikzcd}
				Y_n \arrow{r}{f_n}  \arrow{rd}{g_n} & X \arrow{rd}{h_n} & \\
				&Z \arrow{r}{\iota_n} & H
			\end{tikzcd}	
			\begin{tikzcd}
				Y \arrow[dashed]{r}{f}  \arrow{rd}{g} & X \arrow{rd}{h} & \\
				&Z \arrow{r}{\iota} & H
			\end{tikzcd}
			\]
		}
		Let $a:=\tau_f(b)\in \tau(X)$. 
		It follows that
		\begin{eqnarray*}
			\tau_{h_n}(a_n) = &\tau_{h_n}\left(\tau_{f_n}(b_n)\right)& = \tau_{\iota_n}\left(\tau_{g_n}(b_n) \right),\\
			\tau_{h}(a) = &\tau_{h}\left(\tau_{f}(b)\right)& = \tau_{\iota}\left(\tau_{g}(b) \right).
		\end{eqnarray*}
		Therefore, by letting $c_n:=\tau_{g_n}(b_n)$ and $c:=\tau_g(b)$, one has 
		\begin{eqnarray*}
			d(\tau_{h_n}(a_n),\tau_h(a)) &=& d\big(\tau_{\iota_n}\left(c_n \right), \tau_{\iota}\left(c \right) \big)\\
			&\leq & 
			d\big(\tau_{\iota_n}\left(c_n \right), \tau_{\iota_n}\left(c \right) \big) + d\big(\tau_{\iota_n}\left(c \right), \tau_{\iota}\left(c \right) \big)\\
			&=& d\big(c_n, c\big) + d\big(\tau_{\iota_n}\left(c \right), \tau_{\iota}\left(c \right) \big).
		\end{eqnarray*}
		So, \eqref{eq:thm:functor-precompact:2} and pointwise-continuity imply that $\tau_{h_n}(a_n)\to \tau_h(a)$. Since we had $h_n(X)\to h(X)$, one gets that $(X,a_n)\to (X,a)$ under the metric $\cgf$. In particular, the sequence $(X,a_n)$ is pre-compact in $\mathcal C_{\tau}$. So Lemma~\ref{lem:functor-proper} implies that the sequence $a_1,a_2,\ldots$ has a convergent subsequence in $\tau(X)$. This completes the proof.
	\end{proof}

	\begin{proof}[Proof of Proposition~\ref{prop:functor-precompact}]
		First, assume that such $E'$ exists. For every $X$, let $\tau'(X):=h_X^{-1}(E')$. Since $h_X$ is a proper function, $\tau'(X)$ is a compact subset of $\tau(X)$. It is straightforward that $\tau'$ is a functor from $\mathfrak{Comp}$ to $\mathfrak{Met}$. So Condition~\ref{thm:functor-precompact:ii} of Theorem~\ref{thm:functor-precompact} holds.
		
		Conversely, assume that Condition~\ref{thm:functor-precompact:ii} of Theorem~\ref{thm:functor-precompact} holds but $E'$ does not exist with the desired conditions. The latter implies that there exists a sequence $(X_n,a_n)\in A$ such that $(h_{X_n}(a_n))_n$ does not have any convergent subsequence. By Theorem~\ref{thm:functor-precompact}, we may assume that $(X_n,a_n)\to (X,a)$ for some $(X,a)\in\mathcal C_{\tau}$. By Lemma~\ref{lem:common2}, 
		there exists a compact metric space $Z$ and isometric embeddings $f:X\to Z$ and $f_n:X_n\to Z$ such that $\tau_{f_n}(a_n)\to \tau_f(a)$ in $\tau(Z)$. Continuity of $h_Z$ implies that $h_{X_n}(a_n)=h_Z(\tau_{f_n}(a_n)) \to h_Z(\tau_f(a))=h_X(a)$, which is a contradiction. So the claim is proved.
	\end{proof}
	\del{
		\begin{proof}[Proof of Proposition~\ref{prop:subset}]
			Let $(X_n,a_n)$ be a sequence in $\mathcal C_{\tau'}$ that converges to $(X,a)\in \mathcal C_{\tau}$. So $a_n\in\tau'(X_n)$ and $a\in\tau(X)$. Consider $Z, f$ and $f_n$ as in Lemma~\ref{lem:common2}. Since $\tau'$ is a functor, $\tau_{f_n}(\tau'(X_n))\subseteq \tau'(Z)$. Therefore, $\tau_{f_n}(a_n)\in \tau'(Z)$ and so $\tau_f(a)\in\tau'(Z)$. By condition~\ref{rem:Hcont-weaker:1} of Remark~\ref{rem:Hcont-weaker} for $\tau'$, which is assumed, one gets that $\tau_f(a)\in \tau_f(\tau'(X))$, and hence, $a\in \tau'(X)$. This proves the claim.
		\end{proof}
	}

	\section{The Framework for Boundedly-Compact Metric Spaces}
	\label{sec:noncompact}
	
	In this section, we extend the framework of Section~\ref{sec:compact} to non-compact metric spaces equipped with an additional structure. First, an introduction to this matter is provided in Subsection~\ref{subsec:motivation-noncompact}. 
	
	\subsection{Motivation}
	\label{subsec:motivation-noncompact}
	
	In this section, the term \bcm{} abbreviates \textit{boundedly-compact metric space}.
	Let $X_i$ ($i=1,2$) be metric spaces and $o_i\in X$. Such a pair $(X_i,o_i)$ is called a \defstyle{pointed metric space} and we call $o_i$ the \defstyle{root} here. According to Subsection~\ref{intro:non-compact} of the introduction, we will use the idea~\eqref{eq:beingclose} to extend the framework of Section~\ref{sec:compact} to pointed \bcm s. 
	Let $a_i\in\varphi(X_i)$, where for every \bcm{} $X$, $\varphi(X)$ is a set of possible additional structures (under some conditions specified later).
	For example, one can let $a_i$ be a closed subset, a measure or a discrete subset. 

	For defining a metric with the intuition~\eqref{eq:beingclose}, a formula that has always been successful in the literature is the following integral formula (e.g., in~\cite{AbDeHo13} for measured length spaces): Define the distance between $\mathcal X_i=(X_i,o_i; a_i)$, $i=1,2$, by
	\begin{equation}
		\label{eq:integral}
		\int_0^{\infty} e^{-r}\left(1\wedge d\left(\pcball{\mathcal X}{r}_1,\pcball{\mathcal X}{r}_2 \right) \right)dr,
	\end{equation}
	where $\pcball{\mathcal X}{r}_i$ is the closed ball of radius $r$ centered at $o_i$. The additional structure on $\pcball{\mathcal X}{r}_i$ is the \textit{truncation} of $a_i$ to the ball, the definition of which should be presumed. Also, the distance between $\pcball{\mathcal X}{r}_1$ and $\pcball{\mathcal X}{r}_2$ is measured with a \textit{rooted} GH-type metric. In the latter, the root is taken into account as an additional structure on $\pcball{\mathcal X}{r}_i$. 
	For instance, \del{the rooted GH metric is a special case of~\eqref{eq:ghfunctor} and Example~\ref{ex:basic}:
		\begin{eqnarray}
			\nonumber
			\cgh(\mathcal X_1,\mathcal X_2):= \inf_{Z,f,g}\Big\{\hausdorff(f(X_1),g(X_2)) \vee d(o_1,o_2) \Big\},
		\end{eqnarray}
		when $\mathcal X_i=(X_i,o_i)$ is compact. Also, }the rooted GHP metric (when $\mathcal X_i=(X_i,o_i;\mu_i)$ is compact) is a special case of~\eqref{eq:ghfunctor} and Example~\ref{ex:basic}:
	\begin{equation}
		\nonumber
		\cghp(\mathcal X_1,\mathcal X_2):= \inf_{Z,f,g}\Big\{\hausdorff(f(X_1),g(X_2)) \vee d(o_1,o_2) \vee \prokhorov(f_*\mu_1,g_*\mu_2) \Big\}.
	\end{equation}
	
	\begin{remark}
		\label{rem:flaws}
		Note that $d(\pcball{\mathcal X}{r}_1,\pcball{\mathcal X}{r}_2)$ is neither monotone nor continuous in $r$ in general. In addition, in order to have $\mathcal X_n\to \mathcal X$, the balls $\pcball{\mathcal X}{r}_n$ with a given radius $r$ need not converge to $\pcball{\mathcal X}{r}$ \ali{(one should perturb the ball a little)}. For example, 
		$\mathcal X_n:=(\{0,1+1/n\},0)$ converges to $\mathcal X:=(\{0,1\},0)$, while $\pcball{\mathcal X}{1}_n=(\{0\},0)$ does not converge to $\pcball{\mathcal X}{1}=\mathcal X$. 
		Also, formulas like $\inf\{\epsilon: d(\pcball{\mathcal X}{1/\epsilon}_1,\pcball{\mathcal X}{1/\epsilon}_2)\leq\epsilon\}$ do not satisfy the triangle inequality ({a counter example is constructed in Example~\ref{ex:flaws} below}) and also do not treat the exceptional radii shown in the above example (equation~\eqref{eq:beingclose-khodam} below is a correction of this formula). 
		Some earlier works in the literature have made these mistakes and their definitions or proofs need to be corrected; e.g., Appendix A.2.6 of~\cite{bookDaVe03I} has the monotonicity issue and Subsection~\ref{subsec:isometries} below mentions references having the other issues.
		\unwritten{
			Finally, for the counter example mentioned above, let $N$ be large and consider the following subsets of $l_1$:
			\begin{eqnarray*} 
				\mathcal X_1&:=&\{0\}\cup \{\pm \frac{j}{N} e_k: j\geq N, 3^{j-1}\leq k<3^j \},\\
				\mathcal X_2&:=&\{0\}\cup \{\pm \frac{j+1}{N} e_k: j\geq N, 3^{j-1}\leq k<3^j \},\\
				\mathcal X_3&:=&\{0\}\cup \{\pm \frac{j+1/N}{N} e_k: j\geq N, 3^{j-1}\leq k<3^j \},\\
			\end{eqnarray*}
			rooted at $0$.
			For every $r\geq 1$, one has $\gh(\pcball{\mathcal X}{r}_1,\pcball{\mathcal X}{r}_2)\geq 1$. However, for $r=N+1/N$, one has $\gh(\pcball{\mathcal X}{r}_1,\pcball{\mathcal X}{r}_3)=1/N^2$ and for $r=N$, one has $\gh(\pcball{\mathcal X}{r}_2,\pcball{\mathcal X}{r}_3)=1/N$.
		}
	\end{remark}
	
	Returning to~\eqref{eq:integral} in the general case, the integral is well defined if $d(\pcball{\mathcal X}{r}_1,\pcball{\mathcal X}{r}_2)$ is a measurable function of $r$. In fact, it is usually a c\`ad\`ag function. If so, it is immediate that~\eqref{eq:integral} is a pseudo-metric. Extra effort has been made to show that it is indeed a metric in various examples. Here, we extend the simple proof of~\cite{Kh19ghp} which is based on a generalization of K\"onig's infinity lemma (under the conditions specified later). 
	However, separability and completeness (or being a Borel subset of a complete space) seem to require further assumptions which are discussed below. \unwritten{Without a partial order, the metric is not practical. separability is also an issue, since convergence in $\mathcal C_{\tau}$ should imply convergence in $\mathcal D$.}
	
	As mentioned earlier, the idea of~\eqref{eq:beingclose} is formulated in various ways in the literature. 
	Some papers use~\eqref{eq:integral}, approximate isometries or embedding into a common metric space (for the latter, see Lemma~\ref{lem:convergence2} below).
	Here, we prefer to use the formulation in the previous work~\cite{Kh19ghp}, which we think is easier to axiomatize and more convenient to use: For $\epsilon<1$, 
	\begin{equation}
		\label{eq:beingclose-khodam}
		\begin{minipage}{.85\textwidth}
			\emph{$d(\mathcal X, \mathcal Y)<\epsilon$ when there exists a subspace $\mathcal Y'$ of $\mathcal Y$ \emph{between} $\pcball{\mathcal Y}{1/\epsilon-\epsilon}$ and $\pcball{\mathcal Y}{1/\epsilon+\epsilon}$ such that $d(\pcball{\mathcal X}{1/\epsilon},\mathcal Y')< \epsilon/2$ and vice versa.}
		\end{minipage}
	\end{equation}
	See~\eqref{eq:ghf-noncompact} for a more formal definition.	
	This metric generates the same topology as~\eqref{eq:integral} for (measured) metric spaces and has a Strassen-type property (similarly to Subsection~\ref{subsec:strassen}). The main task in the present section is to extend this definition to metric spaces equipped with additional structures, together with extending the results and proofs of~\cite{Kh19ghp} (being a metric, completeness, separability, etc). This framework will be applied to various types of additional structures in Subsection~\ref{subsec:examples-noncompact} and Section~\ref{sec:special}.
	
	To formalize the above definitions, we require more than a functor $\tau:\mathfrak{Comp}\to\mathfrak{Met}$. The philosophy of~\eqref{eq:beingclose} is based on the assumption that an additional structure $a$ on $X$ can be \textit{truncated} to a compact subset $K\subseteq X$. For instance, if $a$ is a measure on $X$ (resp. a closed subset of $X$), one can let the truncation be $a|_K$ (resp. $a\cap K$). In general, for any isometric embedding $f:Y\to X$, we assume a \textit{truncation map} $\tau^t_f:\tau(X)\to \tau(Y)$ is given. We require $\tau^t$ be a functor, as explained later. In addition, the term \textit{`between'} used in~\eqref{eq:beingclose-khodam} assumes that one can compare two additional structures on the same space. So we will assume that $\tau(X)$ is a metric space equipped with a partial order. 
	\ali{It should be noted that, while a partial order is not explicitly assumed in~\eqref{eq:integral}, it is still used for proving the properties of~\eqref{eq:integral} in the specific examples in the literature.}
	\unwritten{It should be noted that the metric~\eqref{eq:integral} can be generalized without a partial order as well (only with a truncation functor and a c\`adl\`ag assumption), but extending the topological properties of the metric seems to require a partial order.}
	
	An obvious limitation of the philosophy of~\eqref{eq:beingclose} is that one can only consider the additional structures that are uniquely determined by their truncation to compact subsets (if not, one might try to enlarge the set of additional structures under study to ensure that this property is satisfied; e.g., the example of \textit{ends} in Subsection~\ref{subsec:ends}). Therefore, we start by only having a functor on $\mathfrak{Comp}$. Then, when $X$ is non-compact, we \textit{define} an additional structure on $X$ as an abstract object that its truncations to compact subsets of $X$ are known (Definition~\ref{def:phi}). This is an instance of the notion of \textit{inverse limits}. 
	
	{
		\begin{example}
			\label{ex:flaws}
			Finally, here is the counter example promised in Remark~\ref{rem:flaws}. Let $N$ be large, $\delta_1:=0$, $\delta_2:=1$, $\delta_3:=1/N$  and consider the following subsets of $l_1$ for $i=1,2,3$:
			$\mathcal X_i:=\{0\}\cup \frac 1 N\{\pm (j+\delta_i) e_k: j\geq N, 3^{j-1}\leq k<3^j \}$.
			For every $r\geq 1$, one has $\gh(\pcball{\mathcal X}{r}_1,\pcball{\mathcal X}{r}_2)\geq 1$. However, for $r=N+1/N$, one has $\gh(\pcball{\mathcal X}{r}_1,\pcball{\mathcal X}{r}_3)=1/N^2$ and for $r=N$, one has $\gh(\pcball{\mathcal X}{r}_2,\pcball{\mathcal X}{r}_3)=1/N$.
		\end{example}
	}

	\del{
		subsection{Motivation}
		\label{subsec:motivation}
		
		\mar{\ali{Later: Delete this subsection. Move the examples and the remark.}}	
		Here is a heuristic of the steps. 
		Let $\tau:\mathfrak{Comp}\to\mathfrak{Met}$ be a functor as in Section~\ref{sec:compact} which has the continuity properties of Definition~\ref{def:functor-cont} (here, $\mathfrak{Met}$ can be replaced with the category of extended metric spaces as in Remark~\ref{rem:cgf-infty}).
		Assume also that, to every boundedly-compact (non-pointed) metric space $X$, a set $\varphi(X)$ is assigned. No metric is assumed on $\varphi(X)$. Also, for every isometric embedding $f:X\to Y$, assume that an injective function $\varphi_f:\varphi(X)\to\varphi(Y)$ is given. Assume that $\varphi$ is also a functor (from the category of boundedly-compact metric spaces to the category of sets) and extends $\tau$.
		Let $\mathcal C'$ be the set of isomorphism classes of tuples of the form $\mathcal X:=(X,o;a)$, where $X$ is a boundedly-compact metric space, $o\in X$ and $a\in \varphi(X)$. 
		
		To defined a metric on $\mathcal C'$, the idea is that to define the distance between $\mathcal X,\mathcal Y\in\mathcal C'$ by comparing the balls in $\mathcal X$ and $\mathcal Y$ similarly to the definition of the Gromov-Hausdorff-Prokhorov metric in \cite{Kh19ghp}. 
		To do this, we need to assume that for every $\mathcal X=(X,o,a)$ and every compact subset $Y\subseteq X$, the \textit{truncation} of $a$ to $Y$ is defined with suitable properties. Recall that $\cball{r}{o}$ denotes the closed ball of radius $r$ centered at $o$. Let $a^{(r)}\in\varphi(\cball{r}{o})$ be the truncation of $a$ to $\cball{r}{o}$ and $\pcball{\mathcal X}{r}:=(\cball{r}{o}, o;a^{(r)})$.
		Note that by regarding $a^{(r)}$  as an element of $\tau(\cball{r}{o})$, one has $\pcball{\mathcal X}{r}\in \mathcal C_{\tau'}$, where $\tau'$ is the functor defined by $\tau'(X):=X\times \tau(X)$ and $\mathcal C_{\tau'}$ is defined in Section~\ref{sec:compact}.
		For having arguments similar to those in~\cite{Kh19ghp}, one also needs a partial order on $\tau(X)$ with suitable properties. 
		Under some conditions, which are stated in the next subsections, one can proceed similarly to~\cite{Kh19ghp} to define a metric on $\mathcal C'$ and study its properties. 
		
		Before stating the general conditions, it is useful to consider the following simple examples. These examples will be recalled in Subsection~\ref{subsec:examples-noncompact} with more details.

		\begin{example}
			\label{ex:measures-noncompact}
			For boundedly-compact metric spaces $X$, let $\varphi(X)$ be the set of $k$-fold marked measures on $X$, with $k$ given (defined similarly to $\tau^{(m)}(X)$ of Example~\ref{ex:marks2}). The partial order on $\varphi(X)$ is the natural partial order $\leq$ on the set of measures on $X$. 
			For $\mu\in\varphi(X)$ and a compact subset $Y\subseteq X$, let the truncation of $\mu$ be the restriction of $\mu$ to $Y^k\times \Xi$ (as a $k$-fold marked measure on $Y$).
		\end{example}
		
		\begin{example}
			\label{ex:closedSubsets-noncompact}
			Let $\varphi(X)$ be the set of $k$-fold marked closed subsets of $X$, with $k$ given (defined similarly to $\tau^{(s)}(X)$ of  in Example~\ref{ex:marks2}). The partial order on $\varphi(X)$ is that of inclusion. 
			For $K\in\varphi(X)$ and a compact subset $Y\subseteq X$, let the truncation of $K$ be $K\cap (Y^k\times \Xi)$ (as a $k$-fold marked closed subset of $Y$). 
			Note that the intersection might be the empty set. So $\varphi(X)$ should include the empty set from the beginning. See Remark~\ref{rem:hausdorff-infty} regarding the extended metric on $\varphi(X)$ when $X$ is compact.
		\end{example}
		
		\begin{example}
			label{ex:multiple-noncompact}
			If $(\varphi_i)_{i\in I}$ are at most countably many functors with the above properties, one can let $\varphi(X)$ be the product of $(\varphi_i(X))_{i\in I}$ equipped with the metric of Example~\ref{ex:multiple}. One can naturally define the partial order on $\varphi(X)$ and the truncation element by element.
		\end{example}

		\begin{remark}
			\label{rem:variantMetric}
			Assuming that the truncation $\pcball{\mathcal X}{r}$ is well defined as above, one can define the distance between $\mathcal X,\mathcal Y\in \mathcal C'$ by 
			\unwritten{In the cadlag example, the function is not cadlag. However, it has countably many discontinuity points.}
			\begin{equation}
				\label{eq:variantMetric}
				\int_0^{\infty} e^{-r}\left(1\wedge d^c_{\tau'}(\pcball{\mathcal X}{r}, \pcball{\mathcal Y}{r}) \right)dr
			\end{equation}
			where $\tau'$ is as above and $d^c_{\tau'}$ is defined in~\eqref{eq:ghfunctor} (formulas like this are common in various settings in the literature).
			For the integral to be well defined, one may assume that the curve $r\mapsto \pcball{\mathcal X}{r}$ is c\`adl\`ag (which is the case in the above examples and most of the generalizations of the Gromov-Hausdorff metric). If so, it is easy to show that~\eqref{eq:variantMetric} gives a pseudo-metric on $\mathcal C'$. 
			\mar{Later: Give better motivations for the metric.}
			In addition, if the truncation is a functor (discussed in the next subsection), one can prove that~\eqref{eq:variantMetric} is indeed a metric similarly to~\cite{Kh19ghp} (by using the version of K\"onig's infinity lemma in Lemma~\Iref{lem:infinity} of~\cite{Kh19ghp}).
			However, to study further properties like completeness and separability, it seems that more assumptions are needed on $\tau$. So, we prefer to define a metric by the method of~\cite{Kh19ghp} instead of~\eqref{eq:variantMetric}. Under some conditions stated in Remark~\ref{rem:topology} below, these metrics generate the same topology.
			\del{
				In addition, assuming the c\`adl\`ag property mentioned above, one can use the Skorokhod metric~(see e.g., \cite{bookBi99}) to define a metric on $\mathcal D$. However, this metric generates a different topology than~\eqref{eq:variantMetric} and the metric of the next subsection (but might generate the same Borel sigma-field). See~\cite{Kh19ghp} for more discussion in the case of the Gromov-Hausdorff-Prokhorov metric.
			}
		\end{remark}
	}
	
	\subsection{The Space $\mathcal D$}
	\label{subsec:C'}
	
	Now, the precise definitions regarding the framework are presented.	
	Let $\mathfrak{Pos}$ be the category of partially ordered sets (abbreviated by \textit{posets}). The symbol $\leq$ is used to denote the order on any poset. A morphism between objects $A$ and $A'$ of $\mathfrak{Pos}$ is an order-preserving function $f:A\to A'$; i.e., if $a_1\leq a_2$, then $f(a_1)\leq f(a_2)$. 
	Let $\mathfrak{Pos_m}$ be the category of \textit{metric posets} defined as follows. Every object of $\mathfrak{Pos_m}$ is an extended metric space $A$ equipped with a partial order such that for all $a\in A$, the \defstyle{lower cone} $\{a'\in A: a'\leq a \}$ \ali{and the \defstyle{upper cone} $\{a'\in A: a\leq a' \}$ are closed subsets of $A$ (in most of the examples, the lower cones are compact, but this will be assumed only in some results in Subsection~\ref{subsec:topology})}. A morphism between objects $A$ and $A'$ of $\mathfrak{Pos_m}$ is a function $f:A\to A'$ which is both an isometric embedding and is order-preserving. \del{The reader can verify that the sets in Examples~\ref{ex:measures-noncompact}, \ref{ex:closedSubsets-noncompact} and~\ref{ex:multiple-noncompact} are elements of $\mathfrak{Pos_m}$ in the case where the underlying metric space $X$ is compact. }
	
	From now on, we assume a functor $\tau:\mathfrak{Comp}\to \mathfrak{Pos_m}$ is given.
	The examples of compact subsets and finite measures will be updated step by step to illustrate the definitions (see the conclusion in Example~\ref{ex:basicnoncompact-conclusion}).
	
	\begin{example}
		\label{ex:basic-noncompact}
		Let $\tau(X)$ be the set of compact subsets of $X$, including the empty set (equipped with the Hausdorff metric and the inclusion order), or the set of finite measures on $X$ (equipped with the Prokhorov metric and the natural partial order $\mu_1\leq \mu_2$). It is easy to check that all of the above assumptions are satisfied. Note that in the first example, $\tau(X)$ is an extended metric space. It is necessary to include the empty set for the assumptions given later.
	\end{example}
	
	\unwritten{Note that forgetting the metric gives a natural functor from $\mathfrak{Pos_m}$ to $\mathfrak{Pos}$. Here, for every morphism $f:A\to A'$ in $\mathfrak{Pos_m}$, its corresponding morphism in $\mathfrak{Pos}$ is also denoted by $f:A\to A'$. This abuse of notation makes no confusion according to the context.} 
	
	
	\unwritten{
		\begin{lemma}
			\label{lem:f(a)<a}
			For\mar{It seems that we don't need this anymore!} every isometry $f:X\to X$ and every $a\in \tau(X)$, if $\tau_f(a)\leq a$, then $\tau_f(a)=a$.
		\end{lemma}
		\begin{proof}
			Since $\tau_f$ is an isometric embedding, is order preserving and $\tau_f(a)\leq a$, the cone $C:=\{a'\in\tau(X): a'\leq a \}$ is mapped isometrically into itself under $\tau_f$. Since $C$ is compact, $\tau_f$ should be surjective on $C$. But if $\tau_f(a)<a$, then $a\not\in \tau_f(C)$, which is a contradiction.
			%
		\end{proof}
	}
	
	%
		%
	
	\ali{Assume that for every isometric embedding $f:X\to Y$, a \textbf{truncation map} $\tau^t_f:\tau(Y)\to \tau(X)$ is given. By letting $\tau^t(X)$ be the underlying poset of $\tau(X)$, we assume $\tau^t:\mathfrak{Comp}\to\mathfrak{Pos}$ is a \textit{contra-variant functor} (defined similarly to Definition~\ref{def:functor}, but with the morphisms in the reverse direction). It is called \defstyle{the truncation functor} here.}
	Also, assume that for every morphism $f:X\to Y$ of $\mathfrak{Comp}$, $a\in \tau(X)$ and $b\in \tau(Y)$, 
	\begin{equation}
		\label{eq:truncation}
		\begin{array}{rcl}
			\tau^t_f \circ \tau_f(a)&=& a,\\
			\ali{\restrict{b}{f(Y)}:=}\tau_f \circ \tau^t_f(b)&\leq & b.
		\end{array}
	\end{equation}
	
	\begin{example}
		\label{ex:basicnoncompact-trunc}
		The reader can verify that these assumptions hold for the functors of compact subsets and finite measures, where the truncation maps are defined by $\tau^t_f(K):=f^{-1}(K)$ and $\tau^t_f(\mu)(A):=\mu(f(A))$ respectively. Note that $f^{-1}(K)$ might be the empty set, which justifies why the empty set was included in Example~\ref{ex:basic-noncompact}.
	\end{example}

	
	Now, $\tau$ is extended to \bcm s as follows. Here, $\varphi(X)$ represents the set of additional structures on $X$ when $X$ is not compact.
	
	\begin{definition}[The functor $\varphi$]
		\label{def:phi}
		For a \bcm{} $X$, let $I_X$ be the set of compact subsets of $X$. In the language of category theory, let $\varphi(X)$ be the \textit{inverse limit} in $\mathfrak{Pos}$ of the diagram consisting of the objects $\tau(K)$ for $K\in I_X$, where the arrows are the truncation maps \new{(an explicit construction of the inverse limit is given in Example~\ref{ex:basicnoncompact-phi} below). The abstract definition of the inverse limit $\varphi(X)$} is a poset equipped with order-preserving functions $\varphi(X)\to\tau(K)$ for every $K\in I_X$ (which are called \defstyle{truncation maps} again) such that
		\begin{enumerate}[label=(\roman*)]
			\item \label{def:phi-1} For every $K_1\subseteq K_2\in I_X$ and the inclusion map $\iota:K_1\to K_2$, the following diagram is commutative (i.e. the composition of two of the maps is identical to the third map):
			\[ 
			\begin{tikzcd}
				&\arrow[ld] \varphi(X) \arrow{rd}{}&\\
				\tau(K_1)   && \arrow{ll}{\tau^t_{\iota}}\tau(K_2)
			\end{tikzcd}
			\]
			\item \label{def:phi-2} For every other poset $\varphi'(X)$ equipped with truncation maps $\varphi'(X)\to \tau(K)$ as above, there is a unique order-preserving map $\varphi'(X)\to \varphi (X)$ such that the following diagram is commutative for every $K\in I_X$:
			\[ 
			\begin{tikzcd}
				\varphi'(X)\arrow[rd] \arrow{rr} && \arrow{ld}{}\varphi(X)\\
				& \tau(K) &
			\end{tikzcd}
			\]
		\end{enumerate}
	\end{definition}
	The second condition is called  \textit{the universal property}. Such a $\varphi(X)$ (together with the truncation maps) is defined uniquely up to isomorphism (in $\mathfrak{Pos}$). 
	
	\unwritten{Remark: If the upper cones are compact, one can defined $\varphi(X)$ with a forward limit without using truncations as well.}
	
	\begin{example}[\new{Construction of $\varphi(X)$}]
		\label{ex:basicnoncompact-phi}
		In explicit examples, it is usually clear what $\varphi(X)$ is. For examples, for the example of compact subsets (resp. finite measures) discussed above, $\varphi(X)$ can be the set of closed subsets of $X$ (resp. boundedly-finite measures). As another example, if $X$ is compact, one can let $\varphi(X)$ be simply the underlying poset of $\tau(X)$. In the general case, $\varphi(X)$ can be constructed explicitly as follows: 
		An element of $\varphi(X)$ is a collection of elements $a:=\{a_K: K\in I_X\}$ that are compatible with truncation maps; i.e., for every inclusion map $\iota:K_1\to K_2$, one has $\tau^t_{\iota}(a_{K_2})=a_{K_1}$. The partial order can be defined by $a\leq a'$ when $a_K\leq a'_K$ for all $K\in I_X$. Note that $\tau(K)$ is naturally embedded in $\varphi(X)$ for every compact $K\subseteq X$ (see Definition~\ref{def:phi2} below).
	\end{example}
	
	\ali{Note that no metric is assumed on $\varphi(X)$. In fact, one cannot define such a metric in a functorial way for non-rooted \bcm s (see Definition~\ref{def:metricOnPhi}).}
	
	%
	%
	
	\begin{definition}
		\label{def:phi2}
		Let $\varphi$ be defined as above.
		For every isometric embedding $f:X\to X'$ between \bcm s $X$ and $X'$, define the order-preserving maps $\varphi_f:\varphi(X)\to \varphi(X')$ and $\varphi^t_f:\varphi(X')\to\varphi(X)$ as follows (e.g., for the example of measures, these are the push-forward map and the natural truncation map respectively). For every $K'\in I_{X'}$, compose the truncation map $\varphi(X)\to \tau(f^{-1}(K'))$ with $\tau_g:\tau(f^{-1}(K'))\to \tau(K')$, where $g:=f|_{f^{-1}(K')}$. The universal property of $\varphi(X')$ (condition~\ref{def:phi-2} of Definition~\ref{def:phi}) gives a map $\varphi(X)\to \varphi(X')$, which we call $\varphi_f$. Also, for every $K\in I_X$, compose the truncation maps $\varphi(X')\to \tau(f(K))$ and $\tau^t_h:\tau(f(K))\to \tau(K)$, where $h:=f|_K$. The universal property of $\varphi(X)$ gives a map $\varphi(X')\to \varphi(X)$, which we call $\varphi^t_f$.
		%
	\end{definition}
	
	It can be seen that $\varphi$ is a functor and $\varphi^t$ is a contravariant functor from the category of \bcm s to $\mathfrak{Pos}$. In addition, \eqref{eq:truncation} holds for $\varphi$ and $\varphi^t$.

	
	\begin{definition}
		\label{def:C'}
		Let $\mathcal D:=\mathcal D_{\tau,\tau^t}$ be the set of isomorphism classes of tuples $\mathcal X=(X,o; a)$, where $X$ is a \bcm, $o\in X$ and $a\in \varphi(X)$ (it can be seen that $\mathcal D$ is indeed a set). 
		Also, let $\mathcal C$ be the subset of $\mathcal D$ comprising the tuples in which $X$ is compact. Equivalently, $\mathcal C=\mathcal C_{\tau'}$ in Definition~\ref{def:C_tau}, where $\tau':\mathfrak{Comp}\to\mathfrak{Met}$ is the functor defined by $\tau'(X):=X\times \tau(X)$ equipped with the max product metric (taking the product of $X$ and $\tau(X)$ is due to considering rooted metric spaces). 
		Note that~\eqref{eq:ghfunctor} defines a metric $d^c_{\tau'}$ on $\mathcal C$.
	\end{definition}

	\subsection{The Metric on $\mathcal D$}
	\label{subsec:metricOnC'}
	
	Now, we formalize~\eqref{eq:beingclose-khodam} to define a metric on $\mathcal D$.	
	For all tuples $\mathcal X=(X,o;a)$ as above and $r\geq 0$, let $\pcball{\mathcal X}{r}:=(\cball{r}{o},o;\varphi^t_{\iota_r}(a))$, where $\iota_r:\cball{r}{o}\hookrightarrow X$ is the inclusion map. \ali{Here, $\varphi^t_{\iota_r}(a)$ is the truncation of $a$ to $\cball{r}{o}$.}
	For all tuples $\mathcal X:=(X,o;a)$ and $\mathcal X':=(X',o';a')$, 
	define $\mathcal X'\preceq\mathcal X$ if $X'$ is a subspace of $X$, $o'=o$ and $a'\leq \varphi^t_{\iota}(a)$, where $\iota$ is the inclusion map (the latter is equivalent to $\varphi_{\iota}(a')\leq a$). 
	For tuples $\mathcal X=(X,o_X;a_X)$ and $\mathcal Y=(Y,o_Y; a_Y)$ and $0\leq\epsilon\leq 1$, let 
	\begin{equation}
		\label{eq:a_r}
		a_{\epsilon}(\mathcal X,\mathcal Y):=\inf\{d^c_{\tau'}(\pcball{\mathcal X}{1/\epsilon}, \mathcal Y')\},
	\end{equation}
	where \unwritten{The condition is needed s.th. the topology extends the GH (I think)}
	the infimum is over all $\mathcal Y'$ such that $\pcball{\mathcal Y}{1/\epsilon-\epsilon}\preceq \mathcal Y'\preceq \mathcal Y$\del{ (one can also remove the condition $\pcball{\mathcal Y}{r-\epsilon}\preceq\mathcal Y'$ and Theorems~\ref{thm:metric-noncompact}, \ref{thm:functor-precompact-noncompact} and~\ref{thm:functor-polish-noncompact} will remain valid)}. Define \del{the distance between $\mathcal X$ and $\mathcal Y$ by }
	\begin{equation}
		\label{eq:ghf-noncompact}
		d(\mathcal X,\mathcal Y):= \inf\{ \epsilon\in (0,1]: 
		a_{\epsilon}(\mathcal X,\mathcal Y) \vee a_{\epsilon}(\mathcal Y,\mathcal X)
		< \frac{\epsilon}{2}\},
	\end{equation}
	with the convention that $\inf \emptyset:=1$. \ali{Note that because of the bound $\epsilon/2$ in the above definition, one could assume $\mathcal Y'\preceq \mathcal Y^{(1/\epsilon+\epsilon)}$ and~\eqref{eq:ghf-noncompact} would not change. This is due to the fact that an $\epsilon/2$-perturbation in the rooted GH metric results in an at most $\epsilon$-perturbation of the radius.}
	
	To ensure that this equation defines a metric on $\mathcal D$, we assume that the following further assumptions hold. 
	
	\begin{definition}
		\label{def:dZfg}
		Let $\mathcal X=(X,o_X;a_X)$ and $\mathcal Y=(Y,o_Y;a_Y)$ be compact. For isometric embeddings $f:X\to Z$ and $g:Y\to Z$, define
		\[
		d^{Z,f,g}_{\tau}(\mathcal X,\mathcal Y):=d_H(f(X),g(Y))\vee d(f(o_X),g(o_Y))\vee d(\tau_{f}(a_X),\tau_{g}(a_Y)).
		\]	
	\end{definition}
	Note that by taking infimum over all $Z,f,g$, one obtains $d^c_{\tau'}(\mathcal X, \mathcal Y)$.
	\begin{assumption}
		\label{assump:subsetLemma1}
		In the setting of Definition~\ref{def:dZfg}, assume that for all $\mathcal X'\preceq\mathcal X$, there exists $\mathcal Y'\preceq\mathcal Y$ such that 
		$
		d^{Z,f,g}_{\tau}(\mathcal X', \mathcal Y')\leq d^{Z,f,g}_{\tau}(\mathcal X, \mathcal Y) 
		$.
		%
	\end{assumption}
	
	
	
	In this assumption, if $\mathcal X'=(X',o_X; a'_X)$ and $\mathcal Y'=(Y', o_Y; a'_Y)$, one can always let $Y'$ be the largest possible choice $Y':=\{y\in Y: d(g(y),f(X'))\leq \epsilon\}$, where $\epsilon:=d^{Z,f,g}_{\tau}(\mathcal X, \mathcal Y)$, and it remains to find $a'_Y\in \tau(Y')$. This implies that if $\mathcal X'\preceq \pcball{\mathcal X}{r}$, then $\mathcal Y'\preceq \pcball{\mathcal Y}{r+2\epsilon}$. For the other way, we add the following assumption.

	\unwritten{Note also that $\mathcal Y'$ in the above assumption can be chosen such that $d^c_{\tau'}(\mathcal X', \mathcal Y')\leq d^c_{\tau'}(\mathcal X, \mathcal Y)$.}
	
	
	\begin{assumption}
		\label{assump:subsetLemma2}
		In the previous assumption, assume that if $\pcball{\mathcal X}{r}\preceq \mathcal X'\preceq \mathcal X$, then $\mathcal Y'$ can be chosen such that $\pcball{\mathcal Y}{r-2\epsilon}\preceq \mathcal Y'\preceq \mathcal Y$, where $\epsilon:= d^{Z,f,g}_{\tau}(\mathcal X,\mathcal Y)$ (assuming $r\geq 2\epsilon$). 
	\end{assumption}
	
	\unwritten{\begin{remark}
			\ali{In fact, it is enough that Assumptions~\ref{assump:subsetLemma1} and~\ref{assump:subsetLemma2} hold whenever $d^{Z,f,g}_{\tau}(\mathcal X, \mathcal Y)\leq \frac 12$. This is due to the term $\epsilon\in (0,1]$ in~\eqref{eq:ghf-noncompact} and might simplify the arguments in some examples; e.g., in Lemma~\ref{lem:closedProcess}.}
	\end{remark}}
	
	\begin{example}
		\label{ex:subsetLemma}
		\unwritten{This holds even for continuous and cadlag curves, although are not 1-Lipschitz (?).}
		\ali{These assumptions are straightforward for the functor of compact subsets \new{(see the construction in the proof of Lemma~\ref{lem:basic-hausdorff})}. For finite measures, the claim can be proved by the same idea and is already proved in Lemma~3.8 of~\cite{Kh19ghp} by using Strassen's theorem (see also Lemma~1.4 of~\cite{KlLo15markfunction} for Assumption~\ref{assump:subsetLemma1}).}
		\ali{The conditions are also straightforward for the functors of points, curves, marked measures and marked compact subsets, discussed in Subsection~\ref{subsec:examples-noncompact} below.}
	\end{example}
	
	It should be noted that for Assumptions~\ref{assump:subsetLemma1} and~\ref{assump:subsetLemma2} to hold, the metric on $\tau(X)$ should be carefully chosen. 
	Subsection~\ref{subsec:examples-noncompact} discusses some new examples where this assumption does not hold and provides other suitable metrics. 
	It is not known by the author whether or not the next theorems hold for other metrics on $\mathcal D$ (e.g., \eqref{eq:integral}) under weaker assumptions.

	
	
	Assumptions~\eqref{assump:subsetLemma1} and~\eqref{assump:subsetLemma2} straightforwardly lead to the following result, the proof of which is left to the reader (see Lemmas~3.11 and~3.12 of~\cite{Kh19ghp} for the case of measures).
	
	\begin{lemma}
		\label{lem:monotone}
		The function $a_{\epsilon}(\mathcal X,\mathcal Y)$ is non-decreasing in $\epsilon$ in the interval $[0,d(\mathcal X,\mathcal Y))$. Also, if $d(\mathcal X,\mathcal Y)<\epsilon<1$, then $a_{\epsilon}(\mathcal X,\mathcal Y)<\epsilon/2$.
	\end{lemma}

	\begin{theorem}[Metric on $\mathcal D$]
		\label{thm:metric-noncompact}
		Let $\tau, \tau^t$ and $\mathcal D$ be as above. Assume that $\tau$ is pointwise-continuous. 
		Then,~\eqref{eq:ghf-noncompact} defines a metric on $\mathcal D$. 
	\end{theorem}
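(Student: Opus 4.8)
The plan is to verify the three metric axioms for the function $d$ defined in~\eqref{eq:ghf-noncompact}, following the strategy of the corresponding argument in~\cite{Kh19ghp} (which handles the measured case) and supplying the modifications forced by working with the abstract functor $\tau$ and truncation functor $\tau^t$. Symmetry is immediate from the symmetric form $a_\epsilon(\mathcal X,\mathcal Y)\vee a_\epsilon(\mathcal Y,\mathcal X)$ appearing in the definition, so the real work is the triangle inequality and the separation axiom $d(\mathcal X,\mathcal Y)=0\Rightarrow\mathcal X\cong\mathcal Y$.

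For the \emph{triangle inequality}, suppose $d(\mathcal X,\mathcal Y)<\epsilon$ and $d(\mathcal Y,\mathcal Z)<\delta$; I want $d(\mathcal X,\mathcal Z)\le\epsilon+\delta$ (after which one lets $\epsilon,\delta$ decrease to the actual distances). The key is a "composition of approximate embeddings" step: from $a_\epsilon(\mathcal X,\mathcal Y)<\epsilon/2$ one gets a compact space realizing $\pcball{\mathcal X}{1/\epsilon}$ close to some $\mathcal Y'$ with $\pcball{\mathcal Y}{1/\epsilon-\epsilon}\preceq\mathcal Y'\preceq\mathcal Y$, and similarly on the $(\mathcal Y,\mathcal Z)$ side. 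I would first pass to a suitable common radius $r$ (roughly $r\approx 1/(\epsilon+\delta)$), use Assumption~\ref{assump:subsetLemma1} and~\ref{assump:subsetLemma2} to propagate the truncation/monotonicity relations through the composition — this is exactly where the "$r-2\epsilon$" loss in Assumption~\ref{assump:subsetLemma2} is consumed, and why the radii in the definition of $a_\epsilon$ are shifted by $\epsilon$ — and then invoke Lemma~\ref{lem:common} (applied in $\mathcal C_{\tau'}$ to the finite chain of compact pieces) to glue everything into one compact metric space in which the relevant balls of $\mathcal X$ and $\mathcal Z$ sit within Hausdorff distance $\le\epsilon+\delta$ and the $\tau$-structures within distance $\le\epsilon+\delta$. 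Unwinding the bookkeeping then yields $a_{\epsilon+\delta}(\mathcal X,\mathcal Z)\vee a_{\epsilon+\delta}(\mathcal Z,\mathcal X)<(\epsilon+\delta)/2$, i.e. $d(\mathcal X,\mathcal Z)\le\epsilon+\delta$.

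For the \emph{separation axiom}, assume $d(\mathcal X,\mathcal Y)=0$. Then for every $n$ there is $r_n\to\infty$ and $\mathcal Y_n'$ with $\pcball{\mathcal Y}{r_n-1/n}\preceq\mathcal Y_n'\preceq\mathcal Y$ and $d^c_{\tau'}(\pcball{\mathcal X}{r_n},\mathcal Y_n')\to 0$ (and symmetrically with $\mathcal X,\mathcal Y$ swapped). Fixing a radius $R$ and using Assumption~\ref{assump:Hcont-weaker:1} (compactness of $\{\mathcal Y'\preceq\mathcal Y\}$ in $d^c_{\tau'}$, the analogue of Lemma~\Iref{lem:GHP-subsets} of~\cite{Kh19ghp}) together with Lemma~\ref{lem:functor-inf}, I extract for each $R$ an isometry between $\pcball{\mathcal X}{R}$ and a subconfiguration of $\mathcal Y$ squeezed between $\pcball{\mathcal Y}{R-o(1)}$ and $\mathcal Y$; the hypothesis~\eqref{eq:functor-f(a)<a} together with~\eqref{eq:truncation} forces these squeezed subconfigurations to actually equal $\pcball{\mathcal Y}{R}$ (up to isomorphism), and dually on the other side. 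Running $R$ through an increasing sequence and using a König-type compactness/diagonalization argument (as in Lemma~\Iref{lem:infinity} of~\cite{Kh19ghp}) to splice these ball-isometries into a coherent isometry of the whole boundedly-compact spaces — respecting the origins and, via Definitions~\ref{def:phi} and~\ref{def:phi2} (the $\varphi$'s are inverse limits of the $\tau(Y)$'s, so an isometry compatible with all truncations determines the $\varphi$-structure), the additional structures $a_X$ and $a_Y$ — produces the desired isomorphism $\mathcal X\cong\mathcal Y$ in $\mathcal C'$.

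The main obstacle, and the step I would spend the most care on, is the consistency of the truncation bookkeeping in the triangle inequality: tracking how the relations $\pcball{\mathcal Y}{r}\preceq\mathcal Y'\preceq\mathcal Y$ behave under composing two approximate identifications, controlling the accumulated radius loss from Assumption~\ref{assump:subsetLemma2}, and checking that after gluing via Lemma~\ref{lem:common} the truncations computed in the big space agree (via~\eqref{eq:truncation} and functoriality of $\varphi^t$) with those computed downstairs. Everything else is either immediate (symmetry), a routine invocation of the compact-case machinery of Section~\ref{sec:compact} (Lemmas~\ref{lem:common}, \ref{lem:common2}, \ref{lem:functor-inf}), or a citation of the structurally identical argument in~\cite{Kh19ghp}; since the paper explicitly says the proofs parallel~\cite{Kh19ghp}, I would present the triangle-inequality bookkeeping in full and only sketch the separation argument, pointing to the $\mathcal C_{\tau'}$-compactness statement and König's lemma as the reusable black boxes.
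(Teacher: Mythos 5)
Your proposal is correct and follows essentially the same route as the paper, whose own proof is just a deferral to the argument of~\cite{Kh19ghp} with the added use of assumption~\eqref{eq:functor-f(a)<a}; you reconstruct exactly that scheme, with the right ingredients in the right places (Assumptions~\ref{assump:subsetLemma1}--\ref{assump:subsetLemma2} for the radius bookkeeping in the triangle inequality, compactness of $\{\mathcal Y'\preceq\mathcal Y\}$ plus a K\"onig-type diagonalization for the separation axiom, the inverse-limit description of $\varphi$ to upgrade ball isometries to an isomorphism in $\mathcal C'$, and~\eqref{eq:functor-f(a)<a} to force the limiting self-maps to preserve the structure). No gap beyond the level of detail the paper itself leaves implicit.
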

	\ali{Note that the metric~\eqref{eq:ghf-noncompact} is always bounded by 1 even if $\tau(X)$ is an extended metric space.}
	
	\begin{proof}
		It is clear that~\eqref{eq:ghf-noncompact} depends only on equivalence classes of $\mathcal X$ and $\mathcal Y$, is symmetric and $d(\mathcal X, \mathcal X)=0$. For the triangle inequality, assume $d(\mathcal X, \mathcal Y)<\epsilon$ and $d(\mathcal Y,\mathcal Z)<\delta$. By Lemma~\ref{lem:monotone}, $\pcball{\mathcal X}{1/\epsilon}$ is $\frac{\epsilon}{2}$-close to some subspace of $\mathcal{Y}$ and $\pcball{\mathcal Y}{1/\delta}$ is $\frac{\delta}{2}$-close to some subspace of $\mathcal Z$. By using Assumption~\ref{assump:subsetLemma1} two times, one obtains that $\pcball{\mathcal X}{1/(\epsilon+\delta)}$ is $\frac{\epsilon}{2}$-close to some $\mathcal Y'\preceq\mathcal Y$ and $\mathcal Y'$ is $\frac{\delta}{2}$-close to some $\mathcal Z'\preceq\mathcal Z$. In addition, Assumption~\ref{assump:subsetLemma2} implies that $\pcball{\mathcal Z}{1/(\epsilon+\delta)-(\epsilon+\delta)}\preceq\mathcal Z'$ for a suitable choice of $\mathcal Z'$. So, one obtains $a_{\epsilon+\delta}(\mathcal X,\mathcal Z)<\frac 12(\epsilon+\delta)$ and the triangle inequality is proved.
		
		Finally, assume $d(\mathcal X,\mathcal Y)=0$. Lemma~\ref{lem:monotone} implies that for every $\epsilon>0$, $\pcball{\mathcal X}{1/\epsilon}$ is $\frac{\epsilon}{2}$-close to some subspace of $\mathcal Y$. By letting $\epsilon=\frac 1 n \to 0$ and keeping $r$ fixed, Assumption~\ref{assump:subsetLemma1} implies that $\pcball{\mathcal X}{r}$ is $\frac{1}{2n}$-close to some subspace $\mathcal Y_n=(Y_n,o_Y;b_n)$ of $\mathcal Y$ and $\mathcal Y_n\preceq\pcball{\mathcal Y}{r+1}$. 
		By passing to a subsequence, assume that $Y_n\subseteq\cball{r+1}{o_Y}$ converges, namely to $Y'$. So $(Y',o_Y)$ is isometric to $(\cball{r}{o_X},o_X)$, and hence, there exists $b'\in \tau(Y')$ such that $\pcball{\mathcal X}{r}$ is isometric to $\mathcal Y':=(Y',o_Y; b')$. A challenge is to show that $b'$ can be chosen such that a subsequence of $(b_n)_n$ converges to $b'$ after embedding them in $\tau(\cball{r+1}{o_Y})$ (this is immediate if the lower cones are compact, see Assumption~\ref{assump:compactcones}, but we have not assumed it yet). 
		This will be proved in the next paragraph. Assuming this, one can use closedness of cones to deduce that $\mathcal Y'\preceq Y$. In addition, if $\mathcal Y_n$ is chosen according to Assumption~\ref{assump:subsetLemma2}, then closedness of upper cones implies that $\pcball{\mathcal Y}{r-\epsilon}\preceq\mathcal Y'$ for every $\epsilon$. This implies that $\pcball{\mathcal X}{r/2}$ is isometric to $\pcball{\mathcal Y}{r/2}$ for every $r$. Note that the set of isomorphisms between $\pcball{\mathcal X}{r/2}$ and $\pcball{\mathcal Y}{r/2}$ is compact under the sup metric (which is implied by pointwise-continuity). Now, by a version of K\"onig's infinity lemma for compact sets (Lemma~3.13 in~\cite{Kh19ghp}), one can choose an isometry between $\pcball{\mathcal X}{r/2}$ and $\pcball{\mathcal Y}{r/2}$ for every integer $r$ and glue them to obtain an isometry between $\mathcal X$ and $\mathcal Y$. So, $\mathcal X$ is isometric to $\mathcal Y$.
		
		It remains to show that $b'$ can be chosen such that a subsequence of $(b_n)_n$ converges to $b'$ (after embedding them in $\tau(\cball{r+1}{o_Y})$). {This is proved below similarly to the proof of Lemma~\ref{lem:functor-proper}.}
		First, one can safely replace $Y_n$ (resp. $b_n$) by $Y'_n:=Y_n\cup Y'$ (resp. the image of $b_n$ in $\tau(Y'_n)$). So we might assume $Y'\subseteq Y_n$ from the beginning. Let $\iota_n:Y'\to Y_n$ be the inclusion map. Since $\mathcal Y_n\to \mathcal Y'$, Lemma~\ref{lem:common2} gives a compact metric space $Z$ and isometric embeddings $f:Y'\to Z$ and $f_n:Y_n\to Z$ such that $f_n(Y_n)\to f(Y')$, $f_n(o_Y)\to f(o_Y)$ and $\tau_{f_n}(b_n)\to \tau_{f}(b')$. By Lemma~\ref{lem:embedding} and taking a subsequence, one might assume that $f_n\circ \iota_n:Y'\to Z$ is convergent pointwise, namely converging to $g:Y'\to Z$. Note that 
		\[\mathrm{Im}(g)=\lim_n f_n(Y')=\lim_n {f_n(Y_n)}=\mathrm{Im}(f),\]
		where the limits are under the Hausdorff metric. So there exists an isometry $h:Y'\to Y'$ such that $f=g\circ h$. Since $g(o_Y)=f(o_Y)$, one has $h(o_Y)=o_Y$. Let $b'':=\tau_h(b')$. Pointwise-continuity implies that $\tau_{f_n}(\tau_{\iota_n}(b''))\to \tau_g(b'')=\tau_f(b')$. Since $\tau_{f_n}(b_n)\to \tau_f(b')$ as well, it follows that $d\big(\tau_{f_n}(\tau_{\iota_n}(b'')), \tau_{f_n}(b_n)\big)\to 0$; i.e., $d(\tau_{\iota_n}(b''), b_n)\to 0$. 
		Now, since $(Y',o_Y; b'')$ is isomorphic to $(Y', o_Y; b')$ (under the isometry $h$), one could choose $b''$ instead of $b'$ from the beginning. This completes the proof.
	\end{proof}
	
	
	\subsection{The Topology of $\mathcal D$}
	\label{subsec:topology}
	
	From now on, we assume that $\tau$ is pointwise-continuous, and hence, \eqref{eq:ghf-noncompact} is a metric by Theorem~\ref{thm:metric-noncompact}.
	
	\begin{lemma}
		\label{lem:convergence}
		Let $\mathcal X,\mathcal X_1,\mathcal X_2,\ldots\in\mathcal D$. Then $\mathcal X_n\to \mathcal X$ if and only if for every $r>0$ and $\epsilon>0$, there exists a sequence $\pcball{\mathcal X}{r-\epsilon} \preceq\mathcal X'_n\preceq \pcball{\mathcal X}{r+\epsilon}$  such that $d^c_{\tau'}(\pcball{\mathcal X}{r}_n, \mathcal X'_n)\to 0$.
	\end{lemma}
	Further characterizations of convergence will be provided in Theorems~\ref{thm:convergence} and~\ref{thm:integral} and Lemma~\ref{lem:convergence2}.
	\new{
		\begin{proof}
			It is straightforward to see that $d(\mathcal X_n,\mathcal X)\to 0$ if and only if for every $\epsilon>0$, one has $a_{\epsilon}(\mathcal X_n,\mathcal X)\to 0$. If so, and if $r=1/\epsilon$, then the definition of $a_{\epsilon}$ in~\eqref{eq:a_r} gives the desired sequence $\mathcal X'_n$. This implies the same claim for smaller values of $r$ as well by Assumption~\ref{assump:subsetLemma2}. Conversely, one can use the condition for $r:=1/\epsilon$ to deduce that $a_{\epsilon}(\mathcal X_n,\mathcal X)\to 0$.
		\end{proof}
	}
	

	\begin{theorem}[Completeness and Separability]
		\label{thm:functor-polish-noncompact}
		In the setting of Theorem~\ref{thm:metric-noncompact}, assume that $\tau$ is Hausdorff-continuous. If $\tau(X)$ is complete (resp. separable) for every compact metric space $X$, then $\mathcal D$ is also complete (resp. separable).
	\end{theorem}
	
	One can also replace the assumption of Hausdorff-continuity by the assumptions in Remark~\ref{rem:Hcont-weaker}.
	
	\begin{proof}
		
		The definition of the metric~\eqref{eq:ghf-noncompact} directly implies that $d(\mathcal X, \pcball{\mathcal X}{r})\leq 1/r$ for every $\mathcal X\in\mathcal D$. So $\mathcal C_{\tau'}$ is dense as a subset of $\mathcal D$. By Assumptions~\ref{assump:subsetLemma1} and~\ref{assump:subsetLemma2}, one can show that the induced topology on $\mathcal C_{\tau'}$ is coarser than that of the metric $d^c_{\tau'}$. So, separability of $\mathcal D$ is implied by Theorem~\ref{thm:functor-polish}. For completeness, assume $(\mathcal X_n)_n$ is a Cauchy sequence in $\mathcal D$. By taking a subsequence, it is enough to assume $d(\mathcal X_n,\mathcal X_m)\leq 2^{-n}$ for every $m\geq n$. Let $r_n:=2^n$ and $\epsilon_{n,m}:=2^{-n-1}+\cdots + 2^{-m-1}$.
		Let $n$ be fixed. 
		Inductively, construct subspaces $\mathcal X_{n,m}\preceq\mathcal X_m$ for all $m\geq n$ such that $\mathcal X_{n,n}=\pcball{\mathcal X}{r_n}_n$, $d^c_{\tau'}(\mathcal X_{n,m},\mathcal X_{n,m+1})\leq 2^{-m}$ and $\pcball{\mathcal X}{r_n-\epsilon_{n,m}}_m\preceq\mathcal X_{n,m}\preceq \pcball{\mathcal X}{r_n+\epsilon_{n,m}}_m$. This is possible because of
		Assumptions~\ref{assump:subsetLemma1} and~\ref{assump:subsetLemma2}. Therefore, $(\mathcal X_{n,m})_m$ is a Cauchy sequence under $d^c_{\tau'}$, and hence is convergent by Theorem~\ref{thm:functor-polish}. Let $\mathcal Y_n:=\lim_m \mathcal X_{n,m}$.
		
		Note that $\mathcal X_{n,m}$ is a subspace of $\mathcal X_{n+1,m}$ and contains the large ball $\pcball{\mathcal X}{r_n-\epsilon_{n,m}}_{n+1,m}$. So, Assumptions~\ref{assump:subsetLemma1} and~\ref{assump:subsetLemma2} imply that $\mathcal X_{n,m}$ is close to some subspace of $\mathcal Y_{n+1}$. This way, one finds a sequence of subspaces of $\mathcal Y_{n+1}$ that converge to $\mathcal Y_n$ under $d^c_{\tau'}$. So, an argument similar to the proof of Theorem~\ref{thm:metric-noncompact} shows that $\mathcal Y_n$ is isomorphic to a subspace of $\mathcal Y_{n+1}$ which contains the large ball $\pcball{\mathcal Y}{r_n-1}_{n+1}$. In particular, by letting $\mathcal Z_n:=\pcball{\mathcal Y}{r_n-1}_n$, then $\mathcal Z_n$ is isomorphic to a large ball in $\mathcal Z_{n+1}$. So, the spaces $\mathcal Z_n$ can be pasted together to construct an element $\mathcal Z\in\mathcal D$ (see the explicit construction after Definition~\ref{def:phi}). It is straightforward to show that $\mathcal X_n$ converges to $\mathcal Z$. This proves the claim.
	\end{proof}
	
	\unwritten{the balls do not converge, but under stronger assumptions, have a convergent subsequence. Also, without stronger assumptions, the latter can be false.}
	
	
	\ali{For further studying convergence in $\mathcal D$, we assume the following. Most of the examples satisfy this assumption (an exception is c\`adl\`ag curves in Subsection~\ref{subsec:cadlag2}). In the next subsection, we study what happens without this assumption.}
	
	\begin{assumption}
		\label{assump:compactcones}
		For every compact $X$ and $a\in \tau(X)$, the lower cone $\{a'\in \tau(X): a'\leq a \}$ is compact.
	\end{assumption}
	
	


\begin{lemma}
	\label{lem:leftrightlimit}
	For every compact $(X,o;a)\in\mathcal C_{\tau'}$, the monotone curve $h(r):=\restrict{a}{\cball{r}{o}}$ in $\tau(X)$ (defined in~\eqref{eq:truncation}) has both right-limits and left-limits at all points. In addition, if $r<s$, then $h(r)\leq h(r^+)\leq h(s^-)\leq h(s)$. 
\end{lemma}

This curve is usually a c\`adl\`ag curve in the examples (the only exception here is Example~\ref{ex:cadlag}), but it is not required in what follows.

\begin{proof}
	Let $r<s$. For every $r<t<s$, one has $h(r)\leq h(t)\leq h(s)$. By compactness of the cone below $h(s)$, one can find $t_n\downarrow r$ such that 
	$a:=\lim_n h(t_n)$ exists and $a\leq h(s)$. By the same argument, $a\leq h(t)$ for every $t>r$. Also, since upper cones are closed, one has $a\geq h(r)$. Assume $t'_n\downarrow r$ and $a':=\lim_n h(t'_n)$ exists. Since $a\leq h(t'_n)$ for every $n$ and upper cones are closed, one gets $a\leq a'$. Similarly, $a'\leq a$, and hence, $a'=a$. This implies that $a$ is the right limit at $r$. The claim for left limits is proved similarly.
\end{proof}

For instance, for the functor of compact subsets discussed earlier, the curve $h$ is c\`adl\`ag. In particular, this implies that the curve $r\mapsto \cball{r}{o}$ is c\`adl\`ag. In the general case, a radius $r_0\geq 0$ is called a \defstyle{continuity radius of} $\mathcal X$ if both curves $r\mapsto \cball{r}{o}$ and $r\mapsto a|_{\cball{r}{o}}$ are continuous at $r_0$. Therefore, $\mathcal X$ has at most countably many discontinuity radii.

Note that for every continuity radius $r_0$ of $\mathcal X$, the curve $r\mapsto\pcball{\mathcal X}{r}$ is also continuous at $r_0$. This implies that the integral~\eqref{eq:integral} is well defined. 

\begin{theorem}[Convergence]
	\label{thm:convergence}
	Let $\mathcal X,\mathcal X_1,\mathcal X_2,\ldots\in\mathcal D$. The following are equivalent.
	\begin{enumerate}[label=(\roman*)]
		\item \label{thm:convergence:1} $\mathcal X_n\to \mathcal X$.
		\item \label{thm:convergence:3} For every continuity radius $r$ of $\mathcal X$, $\pcball{\mathcal X}{r}_n\to\pcball{\mathcal X}{r}$ under the metric $d^c_{\tau'}$.
		\item \label{thm:convergence:4} For some unbounded set $I\subseteq R^{\geq 0}$, $\pcball{\mathcal X}{r}_n\to\pcball{\mathcal X}{r}$ for every $r\in I$.
		\item \label{thm:convergence:5} $\mathcal X_n$ converges to $\mathcal X$ under~\eqref{eq:integral} (which is a metric by the next theorem).
	\end{enumerate}
\end{theorem}
\new{In addition, Lemma~\ref{lem:convergence2} below provides another convergence criterion based on embedding into a common space. The proof of the above theorem uses the following lemma.}
\begin{lemma}
	\label{lem:continuityRadius}
	If $r>0$ is a continuity radius of $\mathcal X=(X,o;a)$, then 
	\[\bigcap_{\epsilon>0} \{a'\in \tau(X): \restrict{a}{\cball{r-\epsilon}{o}}\leq a'\leq \restrict{a}{\cball{r+\epsilon}{o}} \}=\{\restrict{a}{\cball{r}{o}} \}.\]
	Hence, under Assumption~\ref{assump:compactcones}, the nested sets under intersection \new{are compact and} converge to $\{\restrict{a}{\cball{r}{o}} \}$ in the Hausdorff metric.
\end{lemma}
\begin{proof}
	Let $a'$ be in the intersection. Since the upper (resp. lower) cone at $a'$ is closed, one gets $a'\leq \lim_{\epsilon} \restrict{a}{\cball{r+\epsilon}{o}} = \restrict{a}{\cball{r}{o}}$ (resp. $a'\geq \restrict{a}{\cball{r}{o}}$). This proves the claim.
\end{proof}

\begin{proof}[Proof of Theorem~\ref{thm:convergence}]
	The implications
	\eqref{thm:convergence:3}$\Rightarrow$\eqref{thm:convergence:4}, \eqref{thm:convergence:4}$\Rightarrow$\eqref{thm:convergence:1}, \eqref{thm:convergence:3}$\Rightarrow$\eqref{thm:convergence:5} and also \eqref{thm:convergence:5}$\Rightarrow$\eqref{thm:convergence:3} are straightforward and are left to the reader (see Theorem~3.16 of~\cite{Kh19ghp} and its proof in the case of measures). Part \eqref{thm:convergence:1}$\Rightarrow$\eqref{thm:convergence:3} is implied by Lemmas~\ref{lem:convergence} and~\ref{lem:continuityRadius}
	\new{as follows: Assuming $\mathcal X_n\to \mathcal X$ and $r$ is a continuity radius, Lemma~\ref{lem:convergence} gives a sequence $\epsilon_n\to 0$ and a sequence $\mathcal X'_n$ such that $\pcball{\mathcal X}{r-\epsilon_n} \preceq\mathcal X'_n\preceq \pcball{\mathcal X}{r+\epsilon_n}$ and $d^c_{\tau'}(\pcball{\mathcal X}{r}_n, \mathcal X'_n)\to 0$. The second assertion of Lemma~\ref{lem:continuityRadius} implies that $\mathcal X'_n$ converges to $\pcball{\mathcal X}{r}$, and hence,~\ref{thm:convergence:3} is implied.}
\end{proof}

\begin{theorem}
	\label{thm:integral}
	The integral~\eqref{eq:integral} is well defined, defines a metric on $\mathcal D$ and generates the same topology as the metric~\eqref{eq:ghf-noncompact}. In addition, under the assumptions of Theorem~\ref{thm:functor-polish-noncompact}, $\mathcal D$ is complete and separable under this metric.
\end{theorem}	

\begin{proof}
	It is immediate that~\eqref{eq:integral} is a pseudo-metric. If $\mathcal X$ and $\mathcal Y$ have zero distance under~\eqref{eq:integral}, then $\pcball{\mathcal X}{r}$ is isomorphic to $\pcball{\mathcal Y}{r}$ for almost every $r$. As in the proof of Theorem~\ref{thm:metric-noncompact}, one can deduce that $\mathcal X$ is isomorphic to $\mathcal Y$ by the extension of K\"onig's infinity lemma to compact sets. Hence,~\eqref{eq:integral} is a metric. Theorem~\ref{thm:convergence} proves that the two metrics generate the same topology.
	
	Let $\epsilon>0$ be given and assume that $\mathcal X$ and $\mathcal Y$ have distance less than $\frac 12\epsilon e^{-1/\epsilon}$ under~\eqref{eq:integral}. This implies that $d^c_{\tau'}(\pcball{\mathcal X}{r}, \pcball{\mathcal Y}{r})<\frac{\epsilon}{2}$ for some $r\geq \frac{1}{\epsilon}$. So, Assumptions~\ref{assump:subsetLemma1} and~\ref{assump:subsetLemma2} imply that $a_{\epsilon}(\mathcal X, \mathcal Y)<\frac{\epsilon}{2}$; i.e., $d(\mathcal X,\mathcal Y)\leq\epsilon$. This implies that every Cauchy sequence under~\eqref{eq:integral} is also a Cauchy sequence under~\eqref{eq:ghf-noncompact}. This implies the claim.
\end{proof}

For studying pre-compactness, we add the following assumption.

\begin{assumption}
	\label{assump:Hcont-weaker:1}
	Assume also that condition~\ref{rem:Hcont-weaker:1} of Remark~\ref{rem:Hcont-weaker} holds.
\end{assumption}

\begin{lemma}
	\label{lem:conecompact}
	For every compact $\mathcal X=(X,o; a)$, the set of $\mathcal X'$ such that $\mathcal X'\preceq\mathcal X$ is compact under the metric $d^c_{\tau'}$. Hence, the infimum in~\eqref{eq:a_r} is attained.
\end{lemma}
\begin{proof}
	Let $\mathcal X_n:=(X_n,o; a_n)\preceq \mathcal X$. By the assumption of compactness of cones and taking a subsequence, one may assume $\lim X_n=:X'$ (under the Hausdorff metric) and $\lim \tau_{\iota_n}(a_n)=:b'\leq a$ exist, where $\iota_n:X_n\to X$ is the inclusion map. Assumption~\ref{assump:Hcont-weaker:1} implies that $b'=\tau_{\iota'}(a')$ for some $a'\in \tau(X')$. This proves the claim.
\end{proof}


\begin{theorem}[Pre-compactness]
	\label{thm:functor-precompact-noncompact}
	\unwritten{A modification of the proof is required. This is written in the 1-volume version.}
	Under the assumptions of Theorem~\ref{thm:metric-noncompact}, a subset $\mathcal A \subseteq \mathcal D$ is relatively compact if and only if for every $r\geq 0$, the set of (equivalence classes of the)
	balls $\mathcal A_r:=\{\pcball{\mathcal X}{r}: \mathcal X\in \mathcal A \}$ is relatively compact under the metric $d^c_{\tau'}$.
\end{theorem}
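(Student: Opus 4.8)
The plan is to prove the two implications separately, closely mirroring the structure of Theorem~\Iref{thm:GHP-precompactness} of~\cite{Kh19ghp}. The easy direction is ($\Rightarrow$): if $\mathcal A$ is relatively compact, then for each fixed $r\geq 0$ the map $\mathcal X\mapsto \pcball{\mathcal X}{r}$ should be shown to be continuous from $\mathcal C'$ (with the metric~\eqref{eq:ghf-noncompact}) into $\mathcal C$ (with the metric $d^c_{\tau'}$), at least at all but countably many values of $r$, or after a harmless modification. Granting continuity, the image $\mathcal A_r$ of a relatively compact set under a continuous map is relatively compact, which is exactly condition for that $r$. Handling the possible discontinuities in $r$ (caused by mass of $a$ or points of $X$ sitting exactly on the sphere of radius $r$) is the one technical wrinkle here; I would either invoke that the set of bad $r$ is countable and approximate, or first establish the weaker statement that $\pcball{\mathcal X}{r}$ depends continuously on $\mathcal X$ in the sense needed for relative compactness of images. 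This is the step I expect to reuse verbatim from~\cite{Kh19ghp}.

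For the substantive direction ($\Leftarrow$), assume that $\mathcal A_r$ is relatively compact in $(\mathcal C, d^c_{\tau'})$ for every $r\geq 0$; I want to extract from an arbitrary sequence $(\mathcal X_n)_n$ in $\mathcal A$ a convergent subsequence. The standard approach is a diagonal argument over $r=1,2,3,\dots$: by relative compactness of $\mathcal A_1$, pass to a subsequence along which $\pcball{\mathcal X_n}{1}$ converges in $\mathcal C$; refine again so that $\pcball{\mathcal X_n}{2}$ converges; and so on, taking the diagonal subsequence so that $\pcball{\mathcal X_n}{k}$ converges in $(\mathcal C,d^c_{\tau'})$ for every positive integer $k$. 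The task is then to assemble a limit object $\mathcal X_\infty=(X_\infty,o_\infty;a_\infty)\in\mathcal C'$ whose truncations are (essentially) these limits, and to verify $d(\mathcal X_n,\mathcal X_\infty)\to 0$.

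The assembly step is where the functor hypotheses do the work. The limits of the balls $\pcball{\mathcal X_n}{k}$ are compatible under further truncation because truncation is a (contravariant) functor and $d^c_{\tau'}$-limits respect it; concretely, one uses Assumption~\ref{assump:subsetLemma2} (with its $r-2\epsilon$ slack) to see that truncating the limit at level $k-1$ agrees with the limit at level $k-1$, up to the distortion that~\eqref{eq:ghf-noncompact} already tolerates. One then glues the limit balls along a common compact space using Lemma~\ref{lem:common} (applied at each level, then threaded together across levels), obtaining a boundedly-compact pointed space $X_\infty$ with a distinguished origin $o_\infty$; the compatible family of limit structures on the closed balls $\cball{k}{o_\infty}$ defines, via the inverse-limit description of $\varphi(X_\infty)$ in Definition~\ref{def:phi} and Remark~\ref{rem:phi}, an element $a_\infty\in\varphi(X_\infty)$. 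Here Assumption~\ref{assump:Hcont-weaker:1} (condition~\eqref{rem:Hcont-weaker:1} of Remark~\ref{rem:Hcont-weaker}) guarantees that the limit of the truncated structures actually lies in the image of the relevant $\tau_{\iota}$, so that $a_\infty$ is genuinely defined and not merely a boundary value, and~\eqref{eq:functor-f(a)<a} together with~\eqref{eq:truncation} keeps the identifications consistent. Finally, tracing through the definition~\eqref{eq:ghf-noncompact} of $d$, the facts that $d^c_{\tau'}(\pcball{\mathcal X_n}{1/\epsilon},\cdot)$ is small for large $n$ at each relevant radius, combined with Assumptions~\ref{assump:subsetLemma1} and~\ref{assump:subsetLemma2} to produce the intermediate objects $\mathcal Y'$ with $\pcball{\mathcal Y}{1/\epsilon-\epsilon}\preceq\mathcal Y'\preceq\mathcal Y$, yield $a_\epsilon(\mathcal X_n,\mathcal X_\infty)\vee a_\epsilon(\mathcal X_\infty,\mathcal X_n)<\epsilon/2$ for all small $\epsilon$ once $n$ is large, i.e.\ $d(\mathcal X_n,\mathcal X_\infty)\to 0$.

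The main obstacle, as in~\cite{Kh19ghp}, is the bookkeeping of the radii: the truncation operation is only well behaved away from the boundary spheres, so matching a limit taken at radius $k$ with the truncation-at-radius-$(k-1)$ of a limit taken at radius $k+1$ requires carefully absorbing the $2\epsilon$-distortion allowed by $d^c_{\tau'}$, and this is precisely what Assumption~\ref{assump:subsetLemma2} is designed for. Since the excerpt explicitly says the proof follows Theorem~\Iref{thm:GHP-precompactness} of~\cite{Kh19ghp} with minor modifications, I would present the argument at the level of detail above and point to~\cite{Kh19ghp} for the radius-matching lemmas, indicating only the places where the abstract functor $\tau$ and truncation functor $\tau^t$ replace the concrete measure-truncation used there.
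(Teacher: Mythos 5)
Your plan follows essentially the same route as the paper, which itself only sketches this proof by deferring to Theorem~\Iref{thm:GHP-precompactness} of~\cite{Kh19ghp}: diagonal extraction of convergent balls over integer radii, gluing via Lemmas~\ref{lem:common} and~\ref{lem:common2}, assembly of the limit structure through the inverse-limit description of $\varphi$ (Definition~\ref{def:phi}, Remark~\ref{rem:phi}) using Assumption~\ref{assump:Hcont-weaker:1}, and the radius bookkeeping absorbed by Assumptions~\ref{assump:subsetLemma1} and~\ref{assump:subsetLemma2}. The one caution concerns the ($\Rightarrow$) direction: the map $\mathcal X\mapsto\pcball{\mathcal X}{r}$ is not continuous for a fixed $r$ and the exceptional radii depend on the point, so one cannot get relative compactness of $\mathcal A_r$ for \emph{every} $r$ from a continuity-plus-countable-exceptions argument; instead, as in~\cite{Kh19ghp}, extract $\mathcal X_n\to\mathcal X$ and use that $\pcball{\mathcal X_n}{r}$ eventually lies near the set $\{\mathcal Y':\mathcal Y'\preceq\pcball{\mathcal X}{r'}\}$ for some $r'>r$, which is compact under $d^c_{\tau'}$ by the consequence of Assumption~\ref{assump:Hcont-weaker:1} and compactness of cones noted in the paper --- your second hedge already points in this direction, and this is the form of the argument to keep.
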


\begin{proof}
	The forward direction of the claim is a simple corollary of Lemmas~\ref{lem:convergence} and~\ref{lem:conecompact}. For the other direction, 
	assume $\mathcal A_r$ is relatively compact for every $r$. Let $(\mathcal X_n)_n$ be an arbitrary sequence in $\mathcal A$. By passing to a subsequence and a diagonal argument, one can assume that for every $m\in\mathbb N$, the sequence $(\pcball{\mathcal X}{m}_n)_n$ is convergent under the metric $d^c_{\tau'}$, namely to $\mathcal Y_m$. 
	\ali{The rest of the proof is similar to that of Theorem~\ref{thm:functor-polish-noncompact} and is sketched here.}
	By using Assumptions~\ref{assump:subsetLemma1} and~\ref{assump:subsetLemma2}, one can show that $\mathcal Y_{m-1}$ is isomorphic to a pointed subspace of $\mathcal Y_m$ which contains a large ball; e.g., $\pcball{\mathcal Y}{m-2}_m\preceq \mathcal Y_{m-1}$. It follows that one can paste the spaces $\pcball{\mathcal Y}{m-1}_m$ for all $m$ to obtain a $\mathcal Y\in\mathcal D$. In addition, Lemma~\ref{lem:convergence} implies that $\mathcal X_n\to \mathcal Y$ and the claim is proved.
\end{proof}

\del{
	The following result is useful in the examples and in Section~\ref{sec:special}.
	
	\begin{proposition}
		\label{prop:subset-noncompact}
		\mar{Keep this?}
		In the setting of Theorem~\ref{thm:metric-noncompact}, assume that for every object $X\in \mathfrak{Comp}$, a closed subset $\rho(X)\subseteq \tau(X)$ is selected such that if $a\in\rho(X)$, $b\in\tau(X)$ and $b\leq a$, then $b\in \rho(X)$.
		For isometric embeddings $f:X\to Y$, let $\rho_f$ be the restriction of $\tau_f$ to $\rho(X)$ and $\rho^t_f$ be the restriction of ${\tau^t_f}$ to ${\rho(Y)}$.
		Assume that $\rho$ is a functor that satisfy the same assumptions as in Theorem~\ref{thm:metric-noncompact}. Let $\mathcal C'_{\rho}$ be defined similarly to $\mathcal C'$. Then, $\mathcal C'_{\rho}$ is a closed subset of $\mathcal C'$.
	\end{proposition}
	
	\begin{proof}
		Let $\mathcal X_n=(X_n,o_n;a_n)\in \mathcal C'_{\rho}$ be a sequence that converges to $\mathcal X=(X,o;a)\in \mathcal C'$. Choose $\epsilon>d(\mathcal X_n,\mathcal X)$ such that $\epsilon_n\to 0$. Let $r\geq 1$ be arbitrary. By the definition of the metric and Assumption~\ref{assump:subsetLemma2}, for small enough $\epsilon_n$, one can find $\pcball{\mathcal X}{r-\epsilon_n}\preceq \mathcal Y_{r,n}\preceq \mathcal X$ such that $d^c_{\tau'}\left(\pcball{\mathcal X}{r}_n, \mathcal Y_{r,n} \right)<\epsilon_n/2$. The latter implies that $\mathcal Y_{r,n}\preceq \pcball{\mathcal X}{r+\epsilon_n}$. Therefore, 
		there exists a  subsequence of $(\mathcal Y_{r,n})_n$ that converges to some $\pcball{\mathcal X}{r-1}\preceq \mathcal Y_r\preceq \pcball{\mathcal X}{r+1}$ (see the proof of Theorem~\ref{thm:metric-noncompact}). So a subsequence of $(\pcball{\mathcal X}{r}_n)_n$ converges to $\mathcal Y_r$. Thus, Proposition~\ref{prop:subset} implies that $\mathcal Y_r\in \mathcal C''$, where $\mathcal C''$ is the set of $(Y,p;b)$ such that $Y$ is compact, $p\in Y$ and $b\in\rho(Y)$. So, the condition $\pcball{\mathcal X}{r-1}\preceq \mathcal Y_r$ implies that $\pcball{\mathcal X}{r-1}\in\mathcal C''$. Therefore, the definition of $\mathcal C'_{\rho}$ gives that $\mathcal X\in \mathcal C'_{\rho}$ and the claim is proved.
	\end{proof}
}

\subsection{Weaker Assumptions}
\label{subsec:weaker}

There exist examples in which the cones are not compact; i.e., Assumption~\ref{assump:compactcones} does not hold (e.g., c\`adl\`ag curves in Subsection~\ref{subsec:cadlag2}). Nevertheless, $\mathcal D$ is still a metric space and completeness and separability hold under the assumptions of Theorems~\ref{thm:metric-noncompact} and \ref{thm:functor-polish-noncompact}. However, the forward implication in the pre-compactness theorem (Theorem~\ref{thm:functor-precompact-noncompact}) may fail without Assumption~\ref{assump:compactcones}. 

Also, Lemma~\ref{lem:leftrightlimit} may fail\unwritten{ Note: Without compactness, there exists at most one subsequential limit from right (or left).}. To ensure that the integral~\eqref{eq:integral} is well defined, one may add the assumption that the curve $r\mapsto \restrict{a}{\cball{r}{o}}$ (defined in Lemma~\ref{lem:leftrightlimit}) is continuous at almost every point (this property holds for the example of c\`adl\`ag curves). However, it is still not clear whether this metric generates the same topology as the metric~\eqref{eq:ghf-noncompact} (the proof of Theorem~\ref{thm:convergence} shows that the topology of the integral metric~\eqref{eq:integral} is finer than that of~\eqref{eq:ghf-noncompact}). The reason is that the nested closed sets in Lemma~\ref{lem:continuityRadius} are not necessarily compact, and hence, are not known to converge under the Hausdorff metric. If the latter is added as an assumption, then the proof of Theorem~\ref{thm:convergence} is valid and the two metrics generate the same topology.


\subsection{Fixed Underlying Space}
\label{subsec:fixed}
Given a \bcm{} $S$, Definition~\ref{def:phi} defines a set $\varphi(S)$ of additional structures on $S$. In various cases, one might be interested in considering a random element in $\varphi(S)$ while $S$ is fixed. Some examples are the notions of \textit{random closed sets}, \textit{random measures} and \textit{point processes} in stochastic geometry (see also Subsection~\ref{subsec:stochasticGeometry0}). 
The following topology on $\varphi(S)$ can be used for this purpose.

\begin{definition}
	\label{def:topology}
	Let\unwritten{\mar{Topology doesn't depend on root, but metric does.}} $S$ be a \bcm{} and $a,a_1,a_2,\ldots\in\varphi(S)$. Define $a_n\to a$ if for every $o\in S$ and $0<\epsilon\leq r$, there exists a sequence $(b_n)_n$ in $\varphi(S)$ such that $b_n\to \restrict{a}{\cball{r}{o}}$ and $\restrict{a_n}{\cball{r-\epsilon}{o}}\leq b_n\leq \restrict{a_n}{\cball{r+\epsilon}{o}}$ (the last convergence is regarded in $\tau(\cball{r+\epsilon}{o})$ by an abuse of notation). 
	Note that $\restrict{a_n}{\cball{r}{o}}$ does not need to converge to $\restrict{a}{\cball{r}{o}}$. 
\end{definition}


By fixing an arbitrary root for $S$, the following defines a metrization of this topology (one could also define a metric by the integral formula~\eqref{eq:integral}). Similarly to the previous subsections, it can be seen that this is indeed a metric and $\varphi(S)$ is complete and separable under the assumptions of Theorem~\ref{thm:functor-polish-noncompact}.
This allows one to define a random element in $\varphi(S)$. 
{One could also regard the latter as a random element of $\mathcal D$
	by considering the map $a\mapsto (S,o;a)$ from $\varphi(S)$ to $\mathcal D$, where $o\in S$ is arbitrary (it can be seen that this map is continuous). This is at the cost of considering the elements of $\varphi(S)$ up to equivalence under automorphisms of $(S,o)$.} 


\begin{definition}
	\label{def:metricOnPhi}
	Let $o\in S$ be arbitrary\unwritten{ (it seems that, without choosing a root, no functorial metrization of the above topology exists in the general case)}. For $b,b'\in \varphi(S)$ and $0<\epsilon\leq 1$, define
	\[
	a_{\epsilon}(b,b'):=\inf\{d(\restrict{b}{\cball{1/\epsilon}{o}}, b''): \restrict{b'}{\cball{1/\epsilon-\epsilon}{o}}\leq b''\leq \restrict{b'}{\cball{1/\epsilon+\epsilon}{o}} \}.
	\]
	Then, define $d(b,b'):=\inf\{ \epsilon\in (0,1]: a_{\epsilon}(b, b') \vee a_{\epsilon}(b', b) < \frac{\epsilon}{2}\}$ similarly to~\eqref{eq:ghf-noncompact}.
\end{definition} 

By Assumptions~\ref{assump:subsetLemma1} and~\ref{assump:subsetLemma2}, it can be seen that the topology of this metric coincides with Definition~\ref{def:topology}.

\begin{example}[\ali{Fell Topology and Vague Convergence}]
	\label{ex:Fell-metrization}
	For the functor of closed subsets (resp. boundedly-finite measures), it can be seen that Definition~\ref{def:topology} coincides with the Fell topology (resp. vague convergence) and Definition~\ref{def:metricOnPhi} gives a metrization of this topology. \new{Recall that $C_n\to C\subseteq S$ in the Fell topology when for every open set $U\subseteq S$ intersecting $C$ and for every compact set $K\subseteq S$ avoiding $C$, one has $U\cap C_n\neq \emptyset$ and $K\cap C_n=\emptyset$ for large enough $n$. The proof of the equivalence is left to the reader (use a covering of $\cball{r+\epsilon}{o}$ by finitely many open balls of radius at most $\delta$ for an arbitrary $\delta>0$).}
\end{example}

The topology on $\varphi(S)$ allows one to characterize convergence in $\mathcal D$ in terms of embedding into a common space as follows. This lemma does not require Assumption~\ref{assump:compactcones} of compactness of the cones.

\begin{lemma}
	\label{lem:convergence2}
	A sequence $\mathcal X_n=(X_n,o_n; a_n)$ in $\mathcal D$ converges to $\mathcal X=(X,o; a)$ if and only if there exists a \bcm{} $S$ and isometric embeddings $f_n:X_n\to S$ and $f:X\to S$ such that $f_n(o_n)\to f(o)$, $f_n(X_n)\to f(X)$ in the Fell topology and $\varphi_{f_n}(a_n)\to \varphi_f(a)$ in $\varphi(S)$.
\end{lemma}

\begin{proof}
	Assume $d(\mathcal X_n,\mathcal X)\to 0$. For each $n$, one can embed $\mathcal X_n$ and $\mathcal X$ in a common \bcm{} $S_n$ such that the images of $o_n, X_n$ and $a_n$ are close to those of $o, X$ and $a$ respectively (for this, it is enough to embed two large compact portions of them and then attach the remaining parts). Then, glue $S_1,S_2,\ldots$ by identifying the copies of $X$ in them. It can be seen that the resulting metric space is boundedly-compact and satisfies the claim.
\end{proof}

\begin{example}[Non-Compact/Infinite Marks]
	\label{ex:marks2}
	Fix $k\in\mathbb N$. For compact $X$, 
	let $\tau^{(s)}(X)$ be the set of $k$-fold marked closed subsets of $X$ (not necessarily compact) and $\tau^{(m)}(X)$ be the set of boundedly-finite $k$-fold marked measures on $X$ defined in Definition~\ref{def:marking}. 
	Assuming that the mark space $\Xi$ is boundedly-compact, then
	one can equip $\tau^{(s)}(X)$ and $\tau^{(m)}(X)$ with the Fell topology and the vague topology respectively.  
	Note that one needs a metrization of these topologies in a functorial way (Definition~\ref{def:functor}). In particular, the metrics in Definition~\ref{def:metricOnPhi} or the integral metric~\eqref{eq:integral} do not work since they need a root. To correct this, consider truncating marked measures/subsets to sets of the form $X^k\times \cball{r}{\xi_0}$, where $\xi_0\in\Xi$ is fixed arbitrarily (instead of truncating to balls in $X^k \times \Xi$). Using this truncation, modify Definition~\ref{def:metricOnPhi} accordingly. It can be seen that these are Strassen-type metrics and make $\tau^{(s)}(X)$ and $\tau^{(m)}(X)$ complete and separable. In addition, $\tau^{(s)}$ and $\tau^{(m)}$ are functors which are pointwise-continuous and Hausdorff-continuous. Therefore, the corresponding spaces $\mathcal C_{\tau^{(s)}}$ and $\mathcal C_{\tau^{(m)}}$ are also complete separable metric spaces. This example will be extended to \bcm s in Example~\ref{ex:measures3} (one can also modify the integral metric~\eqref{eq:integral} similarly, but it is no longer Strassen-type and Assumption~\ref{assump:subsetLemma1} does not hold\unwritten{A counterexample for curves is in OneNote.}). 
	%
\end{example}

\subsection{Examples}
\label{subsec:examples-noncompact}

Here, some examples are provided for illustrating how the framework of this section can be used. \new{The verification of the assumptions is mostly left to the reader in order to keep focused on the main thread}. Further examples will be presented in Section~\ref{sec:special}. 

\begin{example}[No Additional Structure]
	When there is no additional structure (see Example~\ref{ex:constant}), Lemma~\ref{lem:convergence} implies that \eqref{eq:ghf-noncompact} is a metrization of Gromov's notion of convergence of boundedly-compact pointed metric spaces (Section~3.B of~\cite{bookGr99}). This metrization has been already provided in~\cite{Kh19ghp}.
\end{example}




\subsubsection{Marked Measures and Marked Closed Subsets}
\label{subsec:markedmeasures}

The following example is the conclusion of the running examples~\ref{ex:basic-noncompact}, \ref{ex:basicnoncompact-trunc}, \ref{ex:basicnoncompact-phi} and \ref{ex:subsetLemma}. It is generalized to marked subsets/measures in the next examples.

\begin{example}[Measures and Closed Subsets]
	\label{ex:basicnoncompact-conclusion}
	For the space $\mathcal D$ of pointed \bcm s equipped with a closed subset (resp. a boundedly-finite measure), already discussed in this section, all of the the assumptions are satisfied. In particular, $\mathcal D$ is a complete separable metric space.
\end{example}

\begin{example}[\ali{Finite Marked Measures}]
	\label{ex:measures4}
	Let $\tau(X)$ be the set of $k$-fold marked finite measures on $X$ (Definition~\ref{def:marking}); i.e., finite measures on $X^k\times \Xi$. Equip $\tau(X)$	 with the Prokhorov metric and the natural partial order on measures. For $\mu\in \tau(X)$ and $f:Y\to X$, let the truncation $\tau^t_f(\mu)$ be the inverse image (under $\tau_f$) of $\restrict{\mu}{f(Y)^k\times \Xi}$.
	Then, $\mathcal D$ is the set of pointed \bcm s $(X,o)$ equipped with a $k$-fold marked measure $\nu$ on $X$ such that the \textit{ground measure} of $\nu$ (i.e., the projection of $\nu$ on $X^k$) is boundedly-finite.
	The results of this section imply that~\eqref{eq:ghf-noncompact} is a metric and $\mathcal D$ is complete and separable.
	%
	%
	%
\end{example}

\begin{example}[Marked Compact Subsets]
	\label{ex:markedcompact}
	Let $\tau(X)$ be the set of $k$-fold marked compact subsets $K$ of $X$, including the empty set. Equip $\tau(X)$ with the Hausdorff extended metric and the inclusion partial order. Define the truncation by $\tau^t_f(K):=(\tau_f)^{-1}(K\cap f(X)^k\times \Xi)$ for $f:X\to Y$ and $K\subseteq\tau(Y)$. 
	Then, $\mathcal D$ is the set of pointed \bcm s $(X,o)$ equipped with a closed subset $C$ of $X^k\times \Xi$ such that for every compact subset $K\subseteq X^k$, $C\cap (K^k\times \Xi)$ is compact. 
	Again, the results of this section imply that $\mathcal D$ is complete and separable.
\end{example}

\del{
	\begin{example}
		\label{ex:measures5}
		\unwritten{Another method: Let $\varphi(X):=\{(\mu,a): a\geq \mu(X) \}$ and similarly for compact sets. This gives a different topology since it prevents escaping to infinity.}
		Let $\mathcal C''$ be the set of boundedly-compact pointed metric spaces equipped with a finite measure (resp. a compact subset). 
		This example can not be obtained by the inverse limit in Definition~\ref{def:phi}. However, it can be seen that $\mathcal C''$ is a Borel subset (in fact, a $F_{\sigma}$ subset) of the Polish space $\mathcal C'$ of Example~\ref{ex:measures4} (resp. Example~\ref{ex:markedcompact}). This is enough for probability-theoretic applications\del{ (see also Proposition~\ref{prop:subset-noncompact})}.
	\end{example}
}

\begin{example}
	\label{ex:measures3}
	Let $\mathcal D$ be the space of pointed \bcm s equipped with a $k$-fold marked closed subset (resp. a boundedly-finite $k$-fold marked measure) with no restrictions. Similarly to the previous examples, if the mark space is boundedly-compact, then $\mathcal D$ can be obtained by considering the functor $\tau^{(s)}$ (resp. $\tau^{(m)}$) of non-compact marked closed subsets (resp. non-finite marked measures) defined in Example~\ref{ex:marks2}. So the results of this section imply that $\mathcal D$ is complete and separable.
	%
\end{example}

\ali{
	\begin{example}[Vague vs Weak convergence]
		Let $\mathcal D'$ be the space of pointed \bcm s equipped with a finite measure (resp. a compact subset). 
		This is a subset of the set in Example~\ref{ex:basicnoncompact-conclusion}, 
		but here we consider the following finer topology which prevents \textit{escaping to infinity}: $(X_n,o_n;a_n)\to (X,o; a)$ if they can be embedded in a common \bcm{} such that, after the embedding, $a_n$ converges to $a$ weakly (resp. under the Hausdorff metric), $X_n$ converges to $X$ in the Fell topology and $o_n$ converges to $o$ (the topology of Example~\ref{ex:basicnoncompact-conclusion} uses vague convergence instead of weak convergence, see~\cite{AtLoWi16}). To use the framework of this section, let $\tau(X)$ be the set of pairs $(\mu,s)$, where $\mu$ is a finite measure on $X$ and $\mu(X)\leq s\in\mathbb R$ (resp. $(K,s)$ where $K$ is a compact subset and $\mathrm{diam}(K)\leq s$). Define the truncation by truncating only the measure/subset while keeping $s$ unchanged. The reader can verify that all of the assumptions of this section are satisfied and the corresponding space $\mathcal D$ is a Polish space which contains $\mathcal D'$ as a closed subspace.
	\end{example}
}

\subsubsection{Additional Point and Discrete Subset}

\unwritten{It is also $G_{\delta}$. To show this, consider the set of elements of $\mathcal D$ such that the intersection with $\oball{r}{o}$ is not a single point!}
Let $\mathcal D'$ be the set of \textit{doubly-pointed} \bcm s; i.e., equipped with one additional point other than the origin. 
One can regard the additional point as a closed subset (or as a Dirac measure) and show that $\mathcal D'$ is a $G_{\delta}$ subset of the set $\mathcal D$ of Example~\ref{ex:basicnoncompact-conclusion} (in fact, the difference of two closed subsets), and hence is Polish.
The following example gives a \ali{more} direct method as a basic illustration of the method of this section. It can be seen that it produces the same metric on $\mathcal D$.
\begin{example}[Additional Point]
	\label{ex:double}
	Let $\varphi(X):=X\cup\{\Delta\}$, where $\Delta$ is an arbitrary element not contained in $X$ called \textit{the grave} (that might depend on $X$). For $a\in \varphi(X)$ and an isometric embedding $f:Y\to X$, define the truncation of $a$ by $\varphi^t_f(a):=f^{-1}(a)$ if $a\in f(Y)$ and $\varphi^t_f(a):=\Delta$ if $a\not\in f(Y)$. The partial order on $\varphi(X)$ is $\forall a:\Delta\leq a$. If $X$ is compact, then consider the extended metric on $\tau(X):=\varphi(X)$ defined by $d(\Delta,a):=\infty$ for all $a\in X$. The reader can verify that this fits into the framework of this section and the assumptions are satisfied. So the corresponding set $\mathcal D$ of Definition~\ref{def:C'} is complete and separable. Here, $\mathcal D'$ is an open subset of $\mathcal D$ (and hence, $\mathcal D'$ is Polish by itself). \unwritten{$\mathcal D'$ is not closed. The additional may escape to the infinity; i.e., escape to the grave.}
\end{example}

{One can also regard $\mathcal D'$ as a subset of the space in the following example. This generates the same topology on $\mathcal D'$.}

\begin{example}[Discrete Subsets]
	\label{ex:finite}
	Let $\mathcal D''$ the space of pointed \bcm s equipped with a discrete subset. This is a subspace of Example~\ref{ex:basicnoncompact-conclusion}, but can also be obtained directly by the functor of finite subsets (Example~\ref{ex:basic}) and all of the assumptions are satisfied except completeness. If one uses the Prokhorov metric (resp. the Hausdorff metric) between finite subsets, the completion of $\mathcal D''$ is the space of pointed \bcm s equipped with a discrete multi-set (resp. a closed subset). 
	%
	%
\end{example}

\subsubsection{Product and Composition of Functors}


\begin{example}[Product]
	\label{ex:multiple-noncompact}
	Let $\mathcal D$ be the space of pointed \bcm s $(X,o)$ equipped with a tuple $(a_1,\ldots,a_n)$, where each $a_i$ belongs to a set $\varphi_i(X)$. Assume each $\varphi_i(X)$ is obtained by a functor $\tau^{(i)}$ and a truncation functor $\tau^{(i),t}$ that satisfy the assumptions of this section.
	Define $\tau(X):=\prod_i \tau^{(i)}(X)$ with the max product metric (if $n=\infty$, use the metric~\eqref{eq:maxproduct}). 
	Define the truncation functor $\tau^t$ and the partial order on $\tau(X)$ element by element. 
	It can be seen that $\tau$ and $\tau^t$ satisfy all of the assumptions as well, and hence, $\mathcal D$ is complete and separable with the resulting metric.
\end{example}

\begin{example}[Composition]
	\label{ex:compose}
	Assume $\tau$ and $\tau^t$ satisfy the assumptions of this section. Let $\rho:\mathfrak{Comp}\to\mathfrak{Met}$ be a functor such that $\rho(X)$ is compact for every $X$. Then, one can define the functor $\tau\circ\rho:\mathfrak{Comp}\rightarrow\mathfrak{Pos_m}$ similarly to Subsection~\ref{subsec:compos}: Let $\tau\circ\rho(X):=\tau(\rho(X))\in\mathfrak{Pos_m}$ and $(\tau\circ\rho)_f:=\tau_{(\rho_f)}$. Define the truncation functor $(\tau\circ\rho)^t$ as follows: If $f:X\to Y$ is an isometric embedding, \ali{which implies that $\rho_f:\rho(X)\to\rho(Y)$ is also an isometric embedding,} for $b\in \tau(\rho(Y))$ define $(\tau\circ\rho)^t_f(b):=\tau^t_{\rho_f}(b)\in\tau(\rho(X))$.
	\ali{According to Lemma~\ref{lem:composition},} if both $\tau$ and $\rho$ are pointwise-continuous (resp. Hausdorff-continuous, resp. 1-Lipschitz), then so is $\tau\circ\rho$. In addition, it can be seen that $\tau\circ\rho$ and $(\tau\circ\rho)^t$ satisfy all of the assumptions of this section except maybe Assumptions~\ref{assump:subsetLemma1} and~\ref{assump:subsetLemma2}. These assumptions should be manually checked in explicit examples. {See e.g., Subsections~\ref{subsec:particle} and~\ref{subsec:closedProcess}.} 
\end{example}

\subsubsection{Continuous Curves}
\label{subsec:curvesnoncompact}
\ali{Let $I$ be a closed interval in $\mathbb R$ containing zero. Here, the set $\mathcal D'$ of pointed \bcm s $X$ equipped with a continuous curve $\eta:I\to X$ is studied. By splitting $\eta$ into two curves $\restrict{\eta}{I\cap [0,\infty)}$ and $\restrict{\eta}{I\cap (-\infty,0]}$ and using Example~\ref{ex:multiple-noncompact}, it is enough to study the cases $I=[0,T]$ and $I=[0,\infty)$.
	
	To define a metric on $\mathcal D'$, we define the truncation of curves as follows. This is in fact a modification of the method of~\cite{GwMi17} (see Subsection~\ref{subsec:ghpu}). A simpler method will also be discussed after the example, which can be generalized to the case where $I$ is an arbitrary compact (or boundedly-compact) metric space.	
}

\begin{example}
	\label{ex:curves2}
	Let $I:=[0,T]$. 
	For compact $X$, let $\tau(X)$ be the set of continuous curves $\eta:I\to X\cup\{\Delta\}$ equipped with the sup (extended) metric, where $\Delta$ is a grave as in Example~\ref{ex:double}. Define $\eta'\leq \eta$ when either $\eta'(\cdot)=\Delta$ or $\eta'$ is obtained by \textit{stopping} $\eta$ at some time $t_0\in I$; i.e., $\eta'(t)=\eta(t\wedge t_0)$.  For all $f:X\to Y$ and $\eta\in \tau(X)$, let $\tau_f(\eta):=f\circ \eta$. Also, for $\eta'\in \tau(Y)$, let $\tau^t_f(\eta'):=f^{-1}\circ \eta'$ stopped at the first exit time of $\eta'$ from $f(X)$ (if $\eta'(0)\not\in f(X)$, let $\tau^t_f(\eta')(\cdot):=\Delta$). It can be seen that these definitions satisfy the assumptions of Subsections~\ref{subsec:C'} and~\ref{subsec:metricOnC'} except Hausdorff-continuity (see Subsection~\ref{subsec:curves}). 
	So the corresponding set $\mathcal D$ is a complete metric space. \ali{Since compact spaces with an additional structure are dense in $\mathcal D$,} separability of $\mathcal D$ is implied by separability in the compact case (Subsection~\ref{subsec:curves}). 
	By Definition~\ref{def:C'}, it can be seen that $\mathcal D$ is the set of pointed \bcm s $(X,o)$ with a continuous curve $\eta$ in $X\cup\{\Delta\}$ such that either (1) $\eta$ is defined on the entire of $[0,T]$ or (2) $\eta$ is defined on some interval $[0,T')\subseteq [0,T]$ and exits any compact subset of $X$ \ali{before time $T'$}. 
	One can see that $\mathcal D'$ is an open subset of the Polish space $\mathcal D$, and hence, is $\mathcal D'$ is Polish. 
	\del{\\
		For the case $I=[0,\infty)$, one can repeat the same arguments as above by letting $\tau(X)$ be the set of convergent continuous curves $\eta:\mathbb R^{\geq 0}\to X\cup\{\Delta\}$ equipped with the sup metric, as in Subsection~\ref{subsec:curves}. Here, it can be seen that the space $\mathcal D$ of Definition~\ref{def:C'} is equivalent to the set of tuples $(X,o,\eta)$ where $X$ is \mar{\correction{Correction: explosion in finite time}} boundedly-compact and $\eta:\mathbb R^{\geq 0}\to X\cup\{\Delta\}$ is a continuous curve which is either convergent or exits any compact subset of $X$ eventually. The next example considers general continuous curves.
	}
	
\end{example}

\del{\begin{remark}
		\label{rem:curvesSplitting}
		In the setting of the above example, similar results hold for the case $I=\mathbb R$. These results are obtained by noting that every continuous curve $\eta:\mathbb R\to X$ is the joining of the two curves $\restrict{\eta}{(-\infty,0]}$ and $\restrict{\eta}{[0,\infty)}$ and by using Examples~\ref{ex:multiple} and~\ref{lem:multiple2}.
	\end{remark}
}

\begin{example}
	\label{ex:curves3}
	Let $I:=[0,\infty)$ and define $\mathcal D'$, the set $\tau(X)$, the partial order and the truncations similarly to the previous example.
	The metric on $\tau(X)$ should be carefully chosen\unwritten{A counterexample for curves is in OneNote.} to ensure that Assumption~\ref{assump:subsetLemma2} holds. A suitable metric on $\tau(X)$ is the following, which is a Strassen-type metric:
	\begin{equation}
		\label{eq:curves-metric}
		d(\eta, \eta'):= \inf\{\epsilon\in(0,1]: \dsup\left(\restrict{\eta}{[0,1/\epsilon]}, \restrict{\eta'}{[0,1/\epsilon]} \right)\leq \epsilon  \}.
	\end{equation}
	It is left to the reader that $\tau$ satisfies the assumptions of this section except Hausdorff-continuity. Similarly to the previous example, it can be seen that the corresponding set $\mathcal D$ is a complete separable metric space and $\mathcal D'$ is a $G_{\delta}$ subset of $\mathcal D$ (the cases where the curve does not explode until time $N$ is open in $\mathcal D$), and hence $\mathcal D'$ is Polish.
	\unwritten{Also, the extension $\varphi(X)$ for non-compact $X$ is the set of continuous curves in $X\cup\{\Delta\}$ that are either defined on the whole $[0,\infty)$ or defined on an interval $[0,T')$ and exit any compact set before time $T'$.}

\end{example}

\unwritten{Example: for all $t\geq 0$, let $\eta(t):=t$ and $\eta_i(t):=t\wedge (i-t)$. Then, $(\mathbb R,0;\eta_i)$ converges to $(\mathbb R,0;\eta)$ in both notions.}

Another method to define a metric on $\mathcal D'$ is by regarding curves as marked closed subsets similarly to Subsection~\ref{subsec:curves}.
Similarly to the proofs of Propositions~\ref{prop:curves} and~\ref{prop:simplemarks}, one can prove the following.

\begin{proposition}
	\label{prop:curves2}
	Let $I=[0,T]$ (resp. $I=[0,\infty)$). By regarding curves as marked closed subsets, $\mathcal D'$ is an open (resp. $G_{\delta}$) subset of the Polish space in Example~\ref{ex:markedcompact} (resp. Example~\ref{ex:measures3}), and hence, $\mathcal D'$ is Polish. 
	In addition, the topology on $\mathcal D'$ coincides with the induced topology.
\end{proposition}

%

\subsubsection{C\`adl\`ag Curves}
\label{subsec:cadlag2}
Let $\mathcal D'$ be the set of tuples $\mathcal X=(X,\eta)$, where $X$ is a \bcm{} and $\eta:I\to X$ is a c\`adl\`ag curve, where $I:=[0,T]$ is a compact interval\del{ (the cases $I=\mathbb R^{\geq 0}$ and $I=\mathbb R$ are treated at the end of the example)}. We show that 
$\mathcal D'$ is not complete but it is still a Polish space. 
\ali{This is an example where cones are not compact, but the weaker assumptions in Subsection~\ref{subsec:weaker} are satisfied.}

\begin{example}
	\label{ex:cadlag}
	For\unwritten{For the subset lemma, a variant of the Skorokhod metric is needed for noncompact intervals, discussed below.} compact $X$,
	let $\tau(X)$ be the set of c\`adl\`ag curves $\eta:I\to X\cup\{\Delta\}$ (where $\Delta$ is a grave as in Example~\ref{ex:double}), such that $\eta^{-1}(\Delta)$ is either the empty set or an interval of the form $[T_{\eta},T]$. For $\eta,\eta'\in \tau(X)$, define $\eta\leq \eta'$ if $\forall t<T_{\eta}:\eta(t)=\eta'(t)$; i.e., $\eta$ is obtained by \textit{killing} $\eta'$ at time $T_{\eta}$. For all isometric embeddings $f:X\to Y$ and $\eta\in\tau(X)$, let $\tau_f(\eta):=f\circ \eta \in \tau(Y)$. Also, for $\eta'\in \tau(Y)$, let the truncation $\tau^t_f(\eta')$ be $f^{-1}\circ\eta'$ killed at the first exit time of $\eta'$ from $f(X)$ (note that if $\eta'(0)\not\in f(X)$, then $\tau^t_f(\eta')(\cdot)=\Delta$). 
	The set $\tau(X)$ is a closed subset of the set of c\`adl\`ag curves in $X\cup\{\Delta\}$ endowed with the Skorokhod metric (which is an extended metric here). So $\tau(X)$ is a complete  separable extended metric space.
	It is left to the reader to show that $\tau$ and $\tau^t$ satisfy all of the assumptions of this section except Assumption~\ref{assump:compactcones}; e.g., if $\eta$ is not continuous on $[0,T_{\eta})$, then the lower cone of $\eta$ is not compact (\new{for verifying Assumptions~\ref{assump:subsetLemma1} and~\ref{assump:subsetLemma2}, consider the construction in Lemma~\ref{lem:basic-hausdorff} for compact subsets and kill the curves suitably}). However, since $\tau$ is both pointwise-continuous and Hausdorff-continuous (see Subsection~\ref{subsec:cadlag}), the results of this section imply that the corresponding set $\mathcal D$ is a complete separable metric space.
	%
	The set $\mathcal D$ is larger than $\mathcal D'$ as described below. 
	When $X$ is not compact, an additional structure on $X$ (Definition~\ref{def:phi}) is a function $\eta:I\to X\cup\{\Delta\}$ such that either (1) $\eta$ is c\`adl\`ag and is killed at the first hitting to $\Delta$ or (2) there exists a time $T_{\eta}\in [0,T]$ such that $\restrict{\eta}{[0,T_{\eta})}$ is a c\`adl\`ag curve in $X$ that exits any compact subset of $X$ before time $T_{\eta}$ (it might have no left limit at $t=T_{\eta}$) and $\restrict{\eta}{[T_{\eta},T]}\equiv \Delta$. 
	Similarly to Example~\ref{ex:curves2}, $\mathcal D'$ is an open subset of $\mathcal D$, and hence, is Polish.
	%
\end{example}

\begin{remark}
	\label{rem:cadlag}
	The above example does not satisfy Assumption~\ref{assump:compactcones} and some of the results based on this assumption. Lemma~\ref{lem:continuityRadius} does not hold here. For instance, let $X:=\mathbb R$, $o:=0$, $\eta(t):=t$ for $0\leq t<1$ and $\eta(t)=t+1$ for $1\leq t\leq T$. If $h(r)$ is the truncation of $\eta$ to $\cball{r}{o}$, then $h$ has no right limit at $r=1$. However, in the general case, it can be seen that $h$ is continuous except at countably many points ($r$ is a continuity point if $\partial\cball{r}{o}$ does not contain any jump points or local maximum of distance from $o$). It can be seen that the weaker assumptions in Subsection~\ref{subsec:weaker} hold, and hence, the convergence result (Theorem~\ref{thm:convergence}) is valid.
	\\
	In addition, the pre-compactness result (Theorem~\ref{thm:functor-precompact-noncompact}) fails in this example. For instance, for $n\geq 1$, let $\eta_n(t):=t$ when $0\leq t<1$ and $\eta_n(t):=t+1-1/n$ when $1\leq t\leq T$. Then, $\mathcal X_n:=(\mathbb R,0,\eta_n)$ is convergent, but $\pcball{\mathcal X}{2}_n$ is not pre-compact.
\end{remark}

\begin{example}
	For\unwritten{\mar{Later: It is Strassen-type}} the case $I:=[0,\infty)$, similar arguments can be applied, but the metric on $\tau(X)$ should be carefully chosen\unwritten{I guess that the topology of~\eqref{eq:variantMetric} is the same anyway.} to ensure that Assumption~\ref{assump:subsetLemma2} holds. A suitable metric on $\tau(X)$ is the following, which has the same idea as~\eqref{eq:ghf-noncompact} \ali{and generates the same topology as the Skorokhod metric (Section~16 of~\cite{bookBi99})}: If $\mathrm{kill}_{t_0}(\eta)$ denotes $\eta$ killed at time $t_0$, let 
	\[
	a_{\epsilon}(\eta,\eta'):=\inf \{ d_S\left(\mathrm{kill}_{1/\epsilon}(\eta), \mathrm{kill}_{t}(\eta')\right): t\geq 0, \norm{t-1/\epsilon}\leq\epsilon\},
	\]
	where $d_S$ denotes the Skorokhod metric defined by the same equation~(12.16) of~\cite{bookBi99}\unwritten{ (\ali{although the formula is given for compact intervals})}. Then, define the distance between $\eta$ and $\eta'$ by a formula similar to~\eqref{eq:ghf-noncompact}. It can be seen that this is a metric on $\tau(X)$ and satisfies the assumptions of this section and the weaker assumptions of Subsection~\ref{subsec:weaker}, except compactness of cones. So the claims of Example~\ref{ex:cadlag} hold also in the case $I=[0,\infty)$.\unwritten{ Here, $\mathcal D'$ is a $F_{\sigma\delta}$ subset.}  
	Similar results can also be obtained for the case $I=\mathbb R$.  
\end{example}

\del{
	\begin{remark}
		In 
		\unwritten{It might have no right limits at positive discontinuity points of the cadlag curve! Also no left limit at negative discontinuity points.} 
		Example~\ref{ex:cadlag}, the function $r\mapsto\pcball{\mathcal X}{r}$ is not necessarily c\`adl\`ag, but it can be seen that it has at most countably many discontinuity points (note that if $\mathcal X=:(X,o;\eta)$, then every discontinuity point of the function is either a discontinuity point of $\eta$ or a local maximum for $d(o,\eta(\cdot))$). \mar{Later: verify this and also the other conditions of functors.} 
		However, the topology of $\mathcal D$ can be studied similarly and a result similar to Theorem~\Iref{thm:convergence} of~\cite{Kh19ghp} holds (see Theorem~16.2 of~\cite{bookBi99}).
	\end{remark}
}

\section{\ali{Further Examples} and Connections to Other Notions}
\label{sec:special}

In this section, some Gromov-Hausdorff-type metrics in the literature are discussed and it is shown how they can be considered as special cases of the general framework of this paper. In addition, some new examples of the framework are provided; e.g., in Subsections~\ref{subsec:closedProcess} and~\ref{subsec:ends}. \new{The goal is to present some examples of the framework and it is not intended to include the full proofs}.

\del{
	As mentioned in the introduction, there are various generalizations of the Gromov-Hausdorff metric in the literature obtained by considering the set of metric spaces equipped with an specific type of additional structures.
	\ali{Defining\mar{Later: See if some of this can be moved to the introduction.} a suitable metric on this set is needed in probability-theoretic applications, so that a sigma field (the Borel sigma field) is available. Such a metric is useful in other fields as well, if a notion of convergence is of interest.
		Two important properties, at least in probability-theoretic applications, are separability and completeness (sometimes, completeness can be removed or weakened to being a Borel subspace of a larger complete space).}
	In this section, it is shown that, roughly speaking, \ali{many of} these examples can be considered as special cases of the general framework of this paper (in some of the examples, the metrics are different but generate the same topology). In particular, various random objects in the literature can be regarded as 
	\textit{random metric spaces equipped with more structures} as in Sections~\ref{sec:compact} and~\ref{sec:noncompact}.
	In addition, some new examples of the framework are provided that are connected to stochastic geometry\mar{later: edit the last sentence} (Subsection~\ref{subsec:stochasticGeometry}) and the notion of \textit{ends} (Subsections~\ref{subsec:ends}). 
}

\del{Some of the examples in the literature are special cases of measured metric spaces. This includes the setting of~\cite{AbDeHo13} for measured length spaces, random measures in Stochastic geometry, the Benjamini-Schramm metric for graphs~\cite{BeSc01}, and the setting of~\cite{I} for discrete spaces. These examples are discussed in~\cite{Kh19ghp} and are skipped here. Also, the proofs for most of the examples are left to the reader for brevity.
}

%

%


\subsection{Random Objects in Stochastic Geometry}
\label{subsec:stochasticGeometry0}

According to Subsection~\ref{subsec:fixed}, the notions of random measures and random closed sets on a fixed space are special cases of the framework of Section~\ref{sec:noncompact}, when the underlying space $S$ is boundedly-compact (in stochastic geometry, one can also assume that $S$ is a locally-compact second-countable Hausdorff topological space). By Example~\ref{ex:finite}, the same holds for \textit{point processes}, which are random discrete (multi-) subsets of $S$. The framework covers some other notions in stochastic geometry as follows.  For instance, a \textit{marked random measure on $S$} is defined in the literature as a random measure on $S\times \Xi$. Sometimes, it is assumed that the \textit{ground measure}; i.e., the projection of the measure onto $S$, is a boundedly-finite. So, this notion is the case $k=1$ of $k$-fold marked measures in Subsection~\ref{subsec:markedmeasures}. Also, the notion of \textit{marked point processes} is a special case of marked random measures defined similarly (one can use the framework directly as well, similarly to Example~\ref{ex:finite}). The notion of \textit{particle processes} will be discussed in Subsection~\ref{subsec:particle}. Additionally, Examples~\ref{ex:markedcompact} and~\ref{ex:measures3} allow one to define \textit{marked random closed subsets of $S$}. 

{As mentioned in Subsection~\ref{subsec:fixed}, all of these notions in stochastic geometry can be extended to the case where the base space $S$ is a random pointed \bcm, at the cost of considering them up to automorphisms of the base space.}

\subsection{Graphs, Networks and Discrete Spaces}
\label{subsec:networks}

The edges of a graph can be represented as a marking of the pairs of vertices. Therefore, with the graph-distance metric, graphs can be regarded as a metric space equipped with a 2-fold marked closed subset. It can be seen that the corresponding topology (Example~\ref{ex:markedcompact}) extends the Benjamini-Schramm convergence of rooted graphs~\cite{BeSc01}. The same holds for local weak convergence of networks~\cite{processes}, where a network is a graph with an additional marking of vertices and edges. For local weak convergence of doubly-rooted graphs and networks, one can use product of functors (see Examples~\ref{ex:markedcompact}, \ref{ex:double} and~\ref{ex:compose}) and the same claim holds.

In~\cite{I}, a complete metric is defined on the space of marked discrete metric spaces. When there are no marks, by equipping every discrete space with the corresponding counting measure, this is a special case of the metrization of the GHP metric (\cite{AbDeHo13}, \cite{Kh19ghp} and Example~\ref{ex:basicnoncompact-phi} above). The marked case is also a special case of the case where the additional structure is a measure and a marked closed subset, which can be obtained by a product of functors.



\subsection{The Gromov-Hausdorff-Prokhorov-Uniform Metric}
\label{subsec:ghpu}
The \defstyle{GHPu metric} is defined in~\cite{GwMi17} on the space $\mathcal M^u$ of compact metric spaces $X$ equipped with a finite measure $\mu$ on $X$ and  a continuous curve $\eta:\mathbb R\to X$ that is convergent as $t\to\infty$ and $t\to-\infty$. It is also proved that $\mathcal M^u$ is complete and separable. This can be expressed in the framework of this paper. Indeed, this is just the product of the functors of measures (Example~\ref{ex:basic-compact-result}) and curves (a slight generalization of Subsection~\ref{subsec:curves}). 

In addition, using a specific formula to truncate curves, \cite{GwMi17} studies the space $\mathcal M^u_{\infty}$ of complete locally-compact length spaces $X$ together with a locally finite measure $\mu$ on $X$ and a continuous curve $\eta:\mathbb R\to X$ pointed at the distinguished point $\eta(0)$. However, despite the claim of~\cite{GwMi17}, $\mathcal M^u_{\infty}$ is not complete. This issue will be resolved below.

To express $\mathcal M^u_{\infty}$ in the framework of this paper, one can use the product of the functors of measures (Example~\ref{ex:basicnoncompact-conclusion}) and curves (Subsection~\ref{subsec:curvesnoncompact}). The truncation in Subsection~\ref{subsec:curvesnoncompact} is slightly different from that of~\cite{GwMi17} (the latter depends on the radius of the ball), but the resulting topology on $\mathcal M^u_{\infty}$ coincides with that of~\cite{GwMi17}. 	
Note that Subsection~\ref{subsec:curvesnoncompact} adds the curves in $X$ that are defined on an open interval or half-line containing $0$ and \textit{blow up} at the end points (this is necessary for completeness). Similarly to Proposition~\ref{prop:curves2}, $\mathcal M^u_{\infty}$ is a $G_{\delta}$ subset of the corresponding space $\mathcal D$, which is a Polish space. Hence, $\mathcal M^u_{\infty}$ is Polish (it is complete with another metric).

The above discussion generalizes the approach of~\cite{GwMi17} in several aspects. First, the spaces are not limited to length spaces and the curves do not necessarily start at the root. Also, curves can be generalized to continuous functions from a given space to $X$. In addition, a simpler method to define a metric is discussed in Subsections~\ref{subsec:curves} and~\ref{subsec:curvesnoncompact} by regarding (the graphs of) curves as marked closed subsets.

\del{
	--------------------------------------------------------------
	
	For all compact metric spaces $X$, let $\tau_0(X)$ be the set of continuous curves $\eta:\mathbb R\to X$ that are convergent as $t\to\infty$ and $t\to-\infty$ (see Subsection~\ref{subsec:curves}).
	In~\cite{GwMi17}, the space $\mathcal M^u$ is considered, which is the set of (equivalence classes of) compact metric spaces $X$ together with a finite measure $\mu$ on $X$ and a continuous curve $\eta\in \tau_0(X)$. 
	Using the uniform metric (i.e., the sup metric) on $\tau_0(X)$, a variant of the Gromov-Hausdorff-Prokhorov metric, called \defstyle{the GHPu metric} is defined on $\mathcal M^u$ in~\cite{GwMi17} (with a formula similar to~\eqref{eq:ghfunctor}). Also, it is proved that $\mathcal M^u$ is a complete separable metric space.
	
	The non-compact case is also studied in~\cite{GwMi17} in the special case of length spaces. Let $\mathcal M^u_{\infty}$ be the set of (equivalence classes of) complete locally-compact length spaces $X$ together with a locally finite measure $\mu$ on $X$ and a continuous curve $\eta:\mathbb R\to X$ pointed at the distinguished point $\eta(0)$. The metric on $\mathcal M^u_{\infty}$ is defined by a  formula similar to~\eqref{eq:variantMetric} using suitable truncations (the precise definitions are skipped for brevity). It is also proved that $\mathcal M^u_{\infty}$ is a separable metric space. However, despite the claim of~\cite{GwMi17}, $\mathcal M^u_{\infty}$ is not complete (see Example~\ref{ex:curves-counterexample} below). Nevertheless, it is shown below that it is a Borel (and $F_{\sigma\delta}$) subspace of some Polish space. This is enough for having a standard probability space for probability-theoretic purposes.
	
	Below, the sets $\mathcal M^u$ and $\mathcal M^u_{\infty}$ are studied according to the setting of Sections~\ref{sec:compact} and~\ref{sec:noncompact} respectively.
	In addition, this allows to replace locally-compact length spaces in the above definition by general boundedly-compact metric spaces. 
	
	In the setting of Section~\ref{sec:compact}, let $\tau(X):=\tau^{(f)}(X)\times \tau_0(X)$, where $\tau^{(f)}(X)$ is the set of finite measures on $X$.  
	It is immediate that
	$\mathcal M^u$ is identical with $\mathcal C_{\tau}$ defined in Section~\ref{sec:compact} and it can be seen that their metrics are equivalent. As mentioned in Subsection~\ref{subsec:curves}, the results of Section~\ref{sec:compact} show that $\mathcal C_{\tau}$ is a complete metric space, but does not directly imply its separability since $\tau_0$ and $\tau$ are not Hausdorff-continuous. 
	Subsection~\ref{subsec:curves} provides two other proofs of the separability of $\mathcal C_{\tau}$ by regarding continuous curves as either 1-fold marked closed subsets or as c\`adl\`ag curves.
	
	
	For the non-compact case, the truncation analogous to $\pcball{\mathcal X}{r}$ defined in~\cite{GwMi17} does not fit in the framework of Section~\ref{sec:noncompact} since the truncation of curves depends on the radius  of the ball\mar{later: explain this dependence}. However, it is shown below that $\mathcal M^u_{\infty}$ is a subspace of the metric space $\mathcal D$ defined in Section~\ref{sec:noncompact} for a suitable functor (another proof is given in Remark\ref{rem:ghpu-marked} below). To do this, Example~\ref{ex:curves3} and Remark~\ref{rem:curvesSplitting} define a suitable metric on the set of continuous curves in $\eta:\mathbb R\to X\cup\{\Delta\}$ (which are not necessarily convergent) for compact metric spaces $X$. Note that the extension to boundedly-compact metric spaces in Example~\ref{ex:curves3} deals with a larger family of curves: the curves in $X$ that are defined on the entire of $\mathbb R$, on an open interval containing $0$, or on an open half-line containing $0$ and \textit{blow up} at the end points (see the discussion in the example and also Example~\ref{ex:curves-counterexample} below). By the results of Example~\ref{ex:curves3} and Remark~\ref{rem:curvesSplitting}, one can show that $\mathcal M^u_{\infty}$ is a Borel (and $F_{\sigma\delta}$) subset of a complete separable metric space $\mathcal D$. In addition, it can be seen that the topology of $\mathcal M^u_{\infty}$ is equivalent to the induced topology from $\mathcal D$. This proves the claim. 
	
	\begin{example}
		\label{ex:curves-counterexample}
		Let $\mathcal X_m$ be the set of real numbers equipped with the Lebesgue measure and the curve $\eta_m:\mathbb R\to\mathbb R$ defined by $\eta_m(t):=m\wedge ({1}/\norm{1-t})$. It can be seen that $(\mathcal X_m)_m$ is a Cauchy sequence in $\mathcal M^u_{\infty}$ but it is not convergent in $\mathcal M^u_{\infty}$. Hence, $\mathcal M^u_{\infty}$ is not complete. However, this sequence is convergent in $\mathcal D$. The limit is the set of real numbers equipped with the Lebesgue measure and the curve $\eta:(-\infty,1)\to\mathbb R$ defined by $\eta(t):=1/(1-t)$.
	\end{example}
	
	\begin{remark}
		\label{rem:ghpu-marked}
		To study the set of metric spaces equipped with a continuous curve, it would be easier to regard curves as marked closed subsets (as in Subsection~\ref{subsec:curves}) and to use the settings of Subsection~\ref{subsec:mark} for the compact case and Example~\ref{ex:marks2} for the boundedly-compact case. This would reduce the technicalities regarding continuous curves; e.g., introducing a grave, defining truncations properly and being obliged to consider more curves as discussed in Example~\ref{ex:curves3}. In addition, it can be seen that this approach would produce the same topologies on $\mathcal M^u$ and $\mathcal M^u_{\infty}$. This is another method to prove that these sets are Borel subspaces of some Polish space.
		\\
		However, it should be noted that by considering curves as marked closed subsets, $\mathcal M^u_{\infty}$ would have a different completion. For instance, the limit of the sequence $\mathcal X_m$ in Example~\ref{ex:curves-counterexample} would be $\mathbb R$ equipped with the graph of the function $\eta:\mathbb R\setminus\{1\}\to \mathbb R$ defined by $\eta(t):=1/\norm{1-t}$ which is different with the limit mentioned in Example~\ref{ex:curves-counterexample} (notice the domain of the curve).
	\end{remark}

}

\subsection{Spatial Trees}
\label{subsec:spatialTree}

%

Let $\Xi$ be a complete separable metric space and $\mathcal T$ be the set of pairs $(X,\varphi)$, where $X$ is a compact metric spaces and $\varphi\in C(X,\Xi)$ is a continuous function from $X$ to $\Xi$.
Consider the following distance function on $\mathcal T$:
\begin{equation}
	\label{eq:spatialTree:0}
	d((X,\varphi),(Y,\psi)):=\inf\bigg\{\frac 12 \mathrm{dis}(R) \vee \sup_{(x,y)\in R}\{d(\varphi(x),\psi(y))\} \bigg\},
\end{equation}
where the infimum is over all correspondences $R$ of $X$ and $Y$ and $\mathrm{dis}(R)$ is the {distortion} of $R$ (see Subsection~\ref{subsec:strassen}).
This distance function is defined in~\cite{DuLe05} for the case of \textit{spatial trees}; i.e., when $X$ and $Y$ are  \textit{real trees} (except that the $\vee$ in the formula is a $+$ in~\cite{DuLe05} and the coefficients are slightly different, which are unimportant changes). It is claimed in~\cite{DuLe05} that `it is easy to verify that $\mathcal T$ is a Polish space'. However, as observed in~\cite{CrHaKu12} and~\cite{BaCrKu17}, $\mathcal T$ is not complete. {The results of~\cite{BaCrKu17} imply that $\mathcal T$ is separable.} 
Here, we prove the following.

\begin{proposition}
	The space $\mathcal T$ is a $G_{\delta}$ subspace of some complete separable metric space, and hence, $\mathcal T$ is a Polish space (i.e., it is complete under another metric that generates the same topology).
\end{proposition}

\begin{proof}
	Note that $C(X,\Xi)$ is not a functor since continuous functions cannot be pushed forward under isometric embeddings naturally. 
	\ali{However, one can identify an element $\varphi\in C(X,\Xi)$ with its graph $\mathrm{gr}_\varphi\subseteq X\times \Xi$ and regard it as a \textit{simple marked closed subset} defined in Example~\ref{ex:marking-function}. Also, the metrics are identical; indeed, the Strassen-type results mentioned in Subsection~\ref{subsec:strassen} imply that~\eqref{eq:spatialTree:0} can be rewritten as
		\begin{equation}
			\label{eq:spatialTree:1}
			d((X,\varphi),(Y,\psi))=\inf\big\{\hausdorff\big(f(X),g(Y)\big)\vee \hausdorff\big(f'(\mathrm{gr}_\varphi),g'(\mathrm{gr}_\psi)\big)  \big\},
		\end{equation}
		where the infimum is over all metric spaces $Z$ and isometric embeddings $f:X\to Z$ and $g:Y\to Z$, and where $f':X\times \Xi\to Z\times \Xi$ is defined by $f'(x,\xi):=(f(x),\xi)$. Hence, the metric~\eqref{eq:spatialTree:0} is identical to the metric for marked closed subsets and $\mathcal T$ is a subspace of the space $\mathcal C_{\tau}$ corresponding to simple marked compact subsets (see Proposition~\ref{prop:simplemarks}). 
		Since $C_{\tau}$ is Polish (it is a $G_{\delta}$ subspace of some Polish space by Proposition~\ref{prop:simplemarks}) and $\mathcal T$ is closed in $\mathcal C_{\tau}$, one obtains that $\mathcal T$ is also Polish and the claim is proved.}
\end{proof}

To avoid the issue of non-completeness of $\mathcal T$, \cite{CaHa21brownianweb} uses another approach by restricting attention to the subspace $\mathcal T'$ of spatial trees $(X,\varphi)$ in which $\varphi$ is \textit{little $\alpha$-Holder}. Then, a metric is defined on $\mathcal T'$ by adding to~\eqref{eq:spatialTree:0} a term addressing the Holder property\footnote{In fact, \cite{CaHa21brownianweb} assumes that $\varphi$ is a proper function as well (this matters only when $X$ is not compact) and adds another term to~\eqref{eq:spatialTree:0} addressing this property. We omitted this because it is not needed for having Polishness and some modification is needed to represent it in the framework of this paper.} (Equation~(2.5) of~\cite{CaHa21brownianweb}). It is proved that $\mathcal T'$ is complete and separable. Similarly to the above proposition, it can be shown that this metric is also a special case of the framework of Section~\ref{sec:compact} written in a Strassen-type form, and also Polishness follows from the results of Section~\ref{sec:compact}.

\ali{The paper~\cite{BaCrKu17} studies the set $\mathcal T''$ of \textit{measured rooted spatial trees} 
	\unwritten{The metric of~\cite{BaCrKu17} is more involved and is defined using both isometric embeddings and correspondences. }
	and shows that $\mathcal T''$ is a separable (non-complete) metric space. The issue of non-completeness can be resolved as follows. First, one can rewrite the metric of~\cite{BaCrKu17} in the form of~\eqref{eq:ghfunctor}, where the additional structure is a tuple of a point, a measure and a 1-fold marked compact subset. Similarly to the above proposition, $\mathcal T''$ is a $G_{\delta}$ subspace of the resulting Polish space, and hence, $\mathcal T''$ is also Polish.
}

Measured rooted spatial trees are also extended in~\cite{BaCrKu17} to the case of locally-compact length spaces. This extension is by the same method as~\cite{AbDeHo13} with the difference that the resulting metric space is not complete. This issue can be resolved by proving Polishness similarly to the above discussion using the results of Section~\ref{sec:noncompact} (see Example~\ref{ex:markedcompact}).

\del{
---------------------------------------------------------------

In this subsection, connections to the settings of~\cite{DuLe05} and~\cite{BaCrKu17} are discussed. The former considers (a specific set of) compact metric spaces equipped with a continuous function and the latter studies the measured version.

Let $\Xi$ be a complete separable metric space. First, note that by letting $\tau_0(X)$ be the set of continuous functions from $X$ to $\Xi$, $\tau_0$ is not a functor as in Definition~\ref{def:functor}. The reason is that for isometric embeddings $f:X\to Z$, there is no natural function from $\tau_0(X)$ to $\tau_0(Z)$. However, one can regard continuous functions as 1-fold marked compact subsets, which will be discussed in the proof of Proposition~\ref{prop:spatialTrees} below.

Let $\mathcal T$ be the set of (equivalence classes of) pairs $(X,\varphi)$, where $X$ is a compact metric spaces and $\varphi:X\to \Xi$ is a continuous function. 
Let $\mathcal T_*$ be the pointed version defined similarly. 
Consider the following distance function on $\mathcal T$:
\begin{equation}
	label{eq:spatialTree:0}
	d((X,\varphi),(Y,\psi)):=\inf\bigg\{\frac 12 \mathrm{dis}(R) \vee \sup_{(x,y)\in R}\{d(\varphi(x),\psi(y))\} \bigg\},
\end{equation}
where the infimum is over all correspondences $R$ of $X$ and $Y$. In the pointed case, consider the same definition under the additional condition that the origins $R$-correspond to each other.
This distance function is defined in~\cite{DuLe05} for the case of \textit{spatial trees}; i.e., when $X$ and $Y$ are  \textit{real trees} (except that the $\vee$ in the formula is a $+$ in~\cite{DuLe05} and the coefficients are slightly different, which are unimportant changes). It is claimed in~\cite{DuLe05} that `it is easy to verify that $\mathcal T_*$ is a Polish space'. However, as observed in~\cite{CrHaKu12} and~\cite{BaCrKu17}, $\mathcal T$ and $\mathcal T_*$ are not complete metric spaces (even in the case of real trees). The results of~\cite{BaCrKu17} imply that $\mathcal T$ and $\mathcal T_*$ are separable metric spaces. Here, we prove the following proposition, which is enough for having a standard probability space.

\begin{proposition}
	\label{prop:spatialTrees}
	The spaces $\mathcal T$ and $\mathcal T_*$ are Borel subsets (in fact, $F_{\sigma\delta}$ subsets) of some Polish space.
\end{proposition}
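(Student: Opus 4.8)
The plan is to realize $\mathcal T$ and $\mathcal T_*$ as Borel subsets of one of the Polish spaces $\mathcal C'$ constructed in Section~\ref{sec:noncompact} (or $\mathcal C_\tau$ of Section~\ref{sec:compact}), by encoding a continuous function $\varphi:X\to\Xi$ as a $1$-marked compact subset of $X$, namely its graph $G_\varphi:=\{(x,\varphi(x)):x\in X\}\subseteq X\times\Xi$. This is exactly the device of Example~\ref{ex:marking-function}: since $\varphi$ is continuous and $X$ is compact, $G_\varphi$ is a nonempty compact subset of $X\times\Xi$, so $G_\varphi\in\tau^{(c)}(X)$ in the notation of Example~\ref{ex:marks}. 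Conversely, a $1$-marked compact subset $K\subseteq X\times\Xi$ is the graph of a continuous function if and only if the first projection $\pi_1\colon K\to X$ is a bijection; by compactness, when $\pi_1$ is a continuous bijection it is automatically a homeomorphism, so $\varphi:=\pi_2\circ\pi_1^{-1}$ is continuous. Hence $\mathcal T$ sits inside $\mathcal C_{\tau^{(c)}}$ (for $k=1$), which is a complete separable metric space by Example~\ref{ex:marks}, and $\mathcal T_*$ sits inside the corresponding pointed version, which one obtains by taking the product functor $\tau^{(c)}\times(X\mapsto X)$ as in Examples~\ref{ex:points} and~\ref{ex:multiple}, again Polish by Theorem~\ref{thm:functor-polish}; alternatively one may use the boundedly-compact framework $\mathcal C'$ of Section~\ref{sec:noncompact}.

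First I would check that the metric~\eqref{eq:spatialTree:0} on $\mathcal T$ (resp. $\mathcal T_*$) generates the same topology as the induced topology from $\mathcal C_{\tau^{(c)}}$. The Gromov-Hausdorff-functor metric on $\mathcal C_{\tau^{(c)}}$ infimizes $\hausdorff(f(X),g(Y))\vee \hausdorff(\mathrm{Im}(\tau^{(c)}_f),\mathrm{Im}(\tau^{(c)}_g))$ over common embeddings; using the correspondence description of the Gromov-Hausdorff metric (Theorem~7.3.25 of~\cite{bookBBI}) and the Strassen-type result Proposition~\ref{prop:strassen-multiple} specialized to a single $1$-marked closed subset, a correspondence $R$ with $\distortion(R)\le 2\epsilon$ and $R_{1,\epsilon}$-matching of the graphs $G_\varphi,G_\psi$ is essentially the same data as in~\eqref{eq:spatialTree:0}. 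The two expressions differ by harmless bounded factors (the $\tfrac12$ and the $\vee$ versus $+$), so they are bi-Lipschitz-equivalent on the subset $\mathcal T$; in particular the inclusion $\mathcal T\hookrightarrow\mathcal C_{\tau^{(c)}}$ is a topological embedding. The same argument handles $\mathcal T_*$ with the extra constraint that roots correspond.

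The main work is the Borel (indeed $F_{\sigma\delta}$) classification. The image of $\mathcal T$ in $\mathcal C_{\tau^{(c)}}$ is the set of (classes of) pairs $(X,K)$ with $K$ a nonempty compact subset of $X\times\Xi$ such that $\pi_1\restriction_K$ is injective and surjective. Surjectivity ($\pi_1(K)=X$) is a closed condition: it says $\hausdorff_X(\pi_1(K),X)=0$, and $(X,K)\mapsto \hausdorff_X(\pi_1(K),X)$ is upper semicontinuous, so the zero set is closed. Injectivity (equivalently: $\pi_1\restriction_K$ is a homeomorphism, i.e. $K$ is the graph of a function) is the delicate part, and it is where the $F_{\sigma\delta}$ (rather than closed) comes from: a continuous bijection can be approximated by non-injective marked sets. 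I would write the "graph'' condition as: for every $n$ there is $m$ such that any two points $(x,\xi),(x',\xi')\in K$ with $d(x,x')\le 1/m$ satisfy $d(\xi,\xi')\le 1/n$ — this is precisely uniform continuity of the putative function $\varphi$, which for compact domain is automatic once $\varphi$ exists, and conversely it forces $\pi_1\restriction_K$ to be injective (two points over the same $x$ would violate it with $n$ large) hence, with surjectivity, a homeomorphism. For fixed $n,m$ the set $\{(X,K): \forall (x,\xi),(x',\xi')\in K,\ d(x,x')\le 1/m\Rightarrow d(\xi,\xi')\le 1/n\}$ is closed in $\mathcal C_{\tau^{(c)}}$ (a ``modulus of continuity'' condition on a Hausdorff-convergent sequence of compact sets passes to the limit — this is the same style of argument used in Subsection~\ref{subsec:curves} to show the space of continuous curves is $F_{\sigma\delta}$ in the space of marked closed subsets, via the modulus $w_\eta$). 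Thus the graph condition is $\bigcap_n\bigcup_m(\text{closed})$, an $F_{\sigma\delta}$ set; intersecting with the closed surjectivity set keeps it $F_{\sigma\delta}$. For $\mathcal T_*$ one intersects further with the closed set $\{\varphi(\mathrm{root})=\text{the marked point over the origin}\}$, or more simply uses the pointed functor throughout; the root-matching condition is closed, so $\mathcal T_*$ is again $F_{\sigma\delta}$.

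The step I expect to be the main obstacle is verifying cleanly that the ``modulus of continuity'' sets are genuinely closed under Gromov-Hausdorff-Prokhorov(-functor) convergence — one must be careful that when $(X_j,K_j)\to(X,K)$ and all $K_j$ satisfy the $(n,m)$-condition, the condition survives in the limit, which requires transferring pairs of points through the approximate isometries and controlling the distortion; the bookkeeping is routine but must be done in the common-space picture (Lemma~\ref{lem:common2}) rather than abstractly. A secondary subtlety is to make sure the equivalence relation (isometries respecting the marked set, resp. the root) does not spoil the Borel structure, i.e. that the described set is a union of equivalence classes — but this is immediate since all the conditions above (surjectivity, modulus of continuity, root-matching) are invariant under the morphisms of $\mathfrak C_{\tau^{(c)}}$.
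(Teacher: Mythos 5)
Your proposal is correct and follows essentially the same route as the paper: encode $\varphi$ by its graph as a $1$-marked compact subset, observe that~\eqref{eq:spatialTree:0} agrees (up to the harmless $\vee$ versus $+$ change) with the restriction of the functor metric~\eqref{eq:ghfunctor}, and exhibit $\mathcal T$ (resp.\ $\mathcal T_*$, with a distinguished point added to the additional structure) as $\bigcap_n\bigcup_m$ of closed modulus-of-continuity sets in $\mathcal C_{\tau^{(c)}}$. Your explicit surjectivity condition and your caveat about closedness of the $(n,m)$-sets (which indeed needs a small adjustment, e.g.\ a strict inequality $d(x,x')<1/m$ in the hypothesis, to survive Hausdorff limits when the underlying spaces vary) are just more detailed renderings of the paper's ``it can be seen'' steps.
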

Note that by Alexandrov's theorem and its converse, $\mathcal T$ and $\mathcal T_*$ are themselves Polish (i.e., homeomorphic to a complete separable metric space) if and only if they are $G_{\delta}$ subsets, which is not clear in this case even if it is true.

\begin{proof}[Proof of Proposition~\ref{prop:spatialTrees}]
	
	Before proving the claim, it is shown first that the above metric is a special case of the metric~\eqref{eq:ghfunctor}. Identify every continuous function $\varphi:X\to \Xi$ with its graph $\mathrm{gr}_\varphi$, which is a closed subset of $X\times \Xi$ equipped with the max product metric. If $f:X\to Z$ is an isometric embedding, let $\tau_f:X\times \Xi\to Z\times \Xi$ be defined by $\tau_f(x,\xi):=(f(x),\xi)$. The reader can verify that the metric~\eqref{eq:spatialTree:0} can be rewritten as
	\begin{equation}
		\label{eq:spatialTree:1}
		d((X,\varphi),(Y,\psi))=\inf\big\{\hausdorff\big(f(X),g(Y)\big)\vee \hausdorff\big(\tau_f(\mathrm{gr}_\varphi),\tau_g(\mathrm{gr}_\psi)\big)  \big\},
	\end{equation}
	where the infimum is over all metric spaces $Z$ and isometric embeddings $f:X\to Z$ and $g:Y\to Z$ (this is a special case of Proposition~\ref{prop:strassen-multiple}). 
	By regarding $\varphi$ as a 1-fold marked compact subset of $X$ (see Example~\ref{ex:marking-function}), it is straightforward that~\eqref{eq:spatialTree:1} is identical to the metric defined in~\eqref{eq:ghfunctor} in which the additional structure is a 1-fold marked compact subset. 
	
	To prove the claim, let $\tau_0(X)$ be the set of continuous functions $\varphi:X\to \Xi$ and $\tau(X)$ be the set of 1-fold marked compact subsets of $X$. Note that, as mentioned above, $\tau_0$ is not a functor. 
	However, Example~\ref{ex:marks} 
	shows that $\tau$ is a functor and defines a complete separable metric space $\mathcal C_{\tau}$. It follows that $\mathcal T$ is a subset of $\mathcal C_{\tau}$. It can be seen that $\mathcal T=\cap_{n}\cup_m \{(X,\varphi)\in\mathcal T: w_{\varphi}(\frac 1m)\leq \frac 1n\}$, where $w_{\varphi}(\epsilon):=\max\{d(\varphi(x),\varphi(y)): x,y\in X, d(x,y)\leq \epsilon \}$ is the modulus of continuity of $\varphi$. It can also be seen that the sets under union are closed subsets of $\mathcal C_{\tau}$ (see Proposition~\ref{prop:subset-noncompact}). Hence, $\mathcal T$ is a Borel (and a $F_{\sigma\delta}$) subset of $\mathcal C_{\tau}$. So the claim is proved.
	
	The pointed case, which is the case of~\cite{DuLe05}, can also be treated similarly. To do this, consider the setting of Section~\ref{sec:compact} where the additional structure is a pair of a point and a 1-fold marked closed subset (see Example~\ref{ex:multiple}). It follows similarly that $\mathcal T_*$ is a Borel (and a $F_{\sigma\delta}$) subset of a Polish space.
\end{proof}

The paper~\cite{BaCrKu17} studies \textit{measured rooted spatial trees}; i.e., spatial trees (discussed above) equipped with a Borel measure and a distinguished point. The metric of~\cite{BaCrKu17} is defined using both isometric embeddings and correspondences and it is shown that a separable (non-complete) metric space is obtained. This metric can be simplified as follows: By changing the $+$ in the formula of~\cite{BaCrKu17} to $\vee$, one obtains an equivalent metric whose formula is similar to~\eqref{eq:spatialTree:1}, where two more terms should be included for the Prokhorov-distance between the measures and the distance between the roots (Proposition~\ref{prop:strassen-multiple} also gives an equivalent formulation by correspondences and approximate couplings). Similarly to the above arguments, it can be seen that this metric is a special case of the metric~\eqref{eq:ghfunctor}, where the additional structure is a tuple of a point, a measure and a 1-fold marked compact subset. Similarly to the above proposition, one can show that the set of measured spatial trees is a Borel ($F_{\sigma\delta}$) subspace of a Polish space.

Measured rooted spatial trees are also extended in~\cite{BaCrKu17} to the case of locally-compact length spaces. This extension is by the same method as that of~\cite{AbDeHo13} 
with the exception that the resulting metric space is not complete (but it is separable). The above arguments can be repeated to show that the latter is a Borel ($F_{\sigma\delta}$) subspace of 
the Polish space $\mathcal D$ defined in Section~\ref{sec:noncompact} for suitable functors.
In addition, by the definitions and results of Section~\ref{sec:noncompact}, locally-compact length spaces can be generalized to boundedly-compact metric spaces. Moreover, the pre-compactness result (Lemma~3.5) of~\cite{BaCrKu17} can be deduced easily from Theorem~\ref{thm:functor-precompact}.
}
\subsection{The Spectral Gromov-Hausdorff Metric}
\label{subsec:spectralGH}
Let $I\subset\mathbb R$ be a fixed compact interval.
The paper~\cite{CrHaKu12} considers the set $\widetilde{\mathcal T}$ of tuples $(X,\pi,q)$, where $X$ is a compact metric spaces, $\pi$ is a Borel probability measure on $X$ and $q\in C(X\times X\times I,\mathbb R)$. A metric on this space is defined in~\cite{CrHaKu12} using both isometric embeddings and correspondences and it is shown that a separable (non-complete) metric space is obtained. This metric is called \defstyle{the spectral Gromov-Hausdorff metric} in~\cite{CrHaKu12}. 
\ali{It is shown below that this is a special case of the framework of Section~\ref{sec:compact} and, in addition, the issue of non-completeness is resolved. }

\ali{Note that $C(X\times X\times I,\mathbb R)$ is not a functor similarly to the previous subsection. However, the elements of $C(X\times X\times I,\mathbb R)$ can be regarded as continuous functions from $X\times X$ to $\Xi$ and vice versa, where $\Xi:=C(I,\mathbb R)$ is equipped with the sup metric. The graph of such a function is a simple 2-fold marked compact subset (Example~\ref{ex:marking-function}). 
So, $\widetilde{\mathcal T}$ is identified with a subset of $\mathcal C_{\tau}$, where $\tau$ is the product of the functor of 2-fold marked compact subsets and the functor of measures. 
In addition, by the Strassen-type results of Subsection~\ref{subsec:strassen}, one can deduce that the metrics on $\widetilde{\mathcal T}$ and $\mathcal C_{\tau}$ are identical. Now, similarly to the previous subsection, it can be seen that $\widetilde{\mathcal T}$ is a $G_{\delta}$ subspace of the Polish space $\mathcal C_{\tau}$, and hence, $\widetilde{\mathcal T}$ is Polish (it is complete with another metric). This resolves the issue of non-completeness of the metric in~\cite{CrHaKu12}.}

\subsection{Particle Processes}
\label{subsec:particle}

\del{
Let $S$ be a boundedly-compact metric space and $o\in S$ be arbitrary. 
Let $\mathcal F$ be the set of closed subsets of $S$. 
One can equip $\mathcal F$ with the \textit{Fell topology}, which makes it a compact Polish space (see e.g., \cite{bookScWe08}).
This allows one to define a \defstyle{random closed subset of $S$} as a random element of $\mathcal F$. 
The Fell topology, restricted to the set of closed subsets of a given compact set of $S$, coincides with the topology of the Hausdorff metric (Theorem~12.3.2 of~\cite{bookScWe08}). \del{In addition, it can be seen that the metrics defined in Remark~\ref{rem:noncompactHausdorff} are metrizations of the Fell topology.}

Consider the space $\mathcal D$ defined in Section~\ref{sec:noncompact} for the functor of example~\ref{ex:closedSubsets-noncompact}.
By considering the map $K\mapsto (S,o;K)$ from $\mathcal F$ to $\mathcal D$, one can regard a random closed subset of $S$ as a random element in $\mathcal D$ at the cost of considering subsets of $S$ up to equivalence under automorphisms of $(S,o)$ (it can be seen that this map is continuous). 
This also allows the base space $(S,o)$ be random, and so, a random elements in $\mathcal D$ can be called a \defstyle{random closed set in a random environment}. 

The issue of the automorphisms in the above discussion can be ruled out by adding marks as sketched in the following. Let the mark space be $\Xi:=S$ and let the mark of  every point $u\in S$ be simply $u$ itself (as in Definition~\ref{def:marking} and Example~\ref{ex:marking-function}).
This way, $\mathcal F$ can be identified with a closed (topological) subspace of $\mathcal D$, and hence, random closed subsets of $S$ are special cases of random elements in $\mathcal D$. The details are left to the reader.

Similarly, random measures on $S$ can be regarded as random pointed measures metric spaces. See~\cite{Kh19ghp} for further details.

\unwritten{More discussion on point processes in is the 1-volume version}
A \defstyle{(simple) point process in $S$} is, roughly speaking, a random discrete subset of $S$. For a formal definition, it is common to regard every discrete subset of $S$ as a measure on $S$ (by considering the associated counting measure). Therefore, point processes are special cases of random measures. As a second approach, one can also regard point processes as random closed subsets of $S$. The latter gives a coarser topology on the set of discrete subsets of $S$, but generates the same Borel sigma-field. See e.g., Theorem~14.28 of~\cite{bookKa97foundations} and the discussion before it. Note that in both approaches, the set of discrete subsets of $S$ is not complete, but it is a Borel subset of another complete separable metric space.

In addition, one can define a \defstyle{point process in a random environment} as a random pointed measured metric space. A direct approach can also be given by the framework of Section~\ref{sec:noncompact} as follows. For all compact metric spaces $X$, let $\tau(X)$ be the set of finite subsets of $X$. By identifying every finite set with the associated counting measure, one can equip $\tau(X)$ with the Prokhorov metric (one can also equip it with the Hausdorff metric, but the topology would be different). Let the partial order on $\tau(X)$ be that of inclusion. For isometric embeddings $f:Y\to X$ and $a\in \tau(X)$, define $\tau^t_f(a):=f^{-1}(a)\in\tau(Y)$. It can be seen that the assumptions of Section~\ref{sec:noncompact} are satisfied, except completeness of $\tau(X)$. In addition, it can be seen that the extension $\varphi(X)$ in Definition~\ref{def:phi} is the set of discrete subsets of $X$. It follows that the set $\mathcal D$ of Definition~\ref{def:C'} is a separable metric space. To obtain the completion of $\mathcal D$, one can let $\tau_1(X)$ be the set of finite multi-sets in $X$ and proceed similarly.
}

Roughly speaking, a \textit{particle process} in a \bcm{} $S$ is a random \textit{discrete} collection of compact subsets of $S$. More precisely, let $\mathcal K(S)$ be the set of nonempty compact subsets of $S$ equipped with the Hausdorff metric. 
Let $\varphi_1(S)$ be the set of discrete subsets $A$ of $\mathcal K(S)$; equivalently, every compact subset of $S$ should contain only finitely many elements of $A$. It is usually required that every compact subset of $S$ intersects only finitely many elements of $A$\del{; equivalently, $A$ is also discrete in $\mathcal F(S)\setminus\{\emptyset\}$}. Let $\varphi_0(S)$ be the set of such subsets of $\mathcal K(S)$. Then, a \defstyle{particle process} in $S$ is a a random element of $\varphi_0(S)$. See e.g., \cite{bookScWe08} for more details\del{\footnote{The usual definition of a particle process is a point process in $\mathcal F(S)\setminus\{\emptyset\}$ that is supported on $\varphi_0(S)$. {This approach is treated in Subsection~\ref{subsec:closedProcess} below.} In this approach, the topology of $\varphi_0(S)$ is different, but the corresponding Borel sigma-field is not changed. This definition is used in the literature to define local finiteness of measures on $\varphi_0(X)$, but continuity properties are treated by the first definition.}}. The discussion below allows one to define a \defstyle{particle process in a random environment} as well.

To apply the framework,
let $\mathcal D_0$ (resp. $\mathcal D_1$) be the space of pointed \bcm s $(S,o)$ equipped with some $a\in \varphi_0(S)$ (resp. $a\in\varphi_1(S)$).
For all compact metric spaces $X$, 
let $\tau(X)$ be the set of finite subsets of $\mathcal K(X)$ equipped with the inclusion partial order and the Prokhorov metric. 
Then, a truncation functor $\tau^t$ is defined as follows: For every isometric embedding $f:Y\to X$ and $a\in\tau(X)$, let $\tau^t_f(a):=\{f^{-1}(K): K\in a, K\subseteq f(Y) \}$.	
Note that this definition of the pair $\tau$ and $\tau^t$ is exactly the composition of the two functors of compact subsets and finite subsets (Example~\ref{ex:compose}). Therefore, all of the assumptions of Section~\ref{sec:noncompact} are satisfied except that $\tau(X)$ is not complete (verification of Assumptions~\ref{assump:subsetLemma1} and~\ref{assump:subsetLemma2} should be done separately and is left to the reader).
In addition, the extension defined in Definition~\ref{def:phi} coincides with $\varphi_1$. So, \eqref{eq:ghf-noncompact} defines a metric on $\mathcal D = \mathcal D_1$ which makes it separable.

The completion of $\mathcal D_1$ is the space $\mathcal D_2$ of pointed \bcm s $(S,o)$ equipped with a discrete multi-set in $\mathcal K(S)$. One can proceed similarly to show that $\mathcal D_2$ is a Polish space. Also,\unwritten{To show that it is a Borel subset, consider the condition that at most $M$ subsets intersect $\oball{r}{o}$.} it can be seen that $\mathcal D_2$ contains $\mathcal D_1$ and $\mathcal D_0$ as  Borel subsets. This allows one to define random elements in $\mathcal D_0$ or $\mathcal D_1$, as desired.

One could also equip $\tau(X)$ with the Hausdorff metric, but the topology would become coarser. 
Here, the completion of $\tau(X)$ is $\mathcal K(\mathcal K(X))\cup\{\emptyset\}$, which is a composition of two functors as in Example~\ref{ex:compose} and satisfies all of the assumptions. Also, the completion of $\mathcal D_1$ is the space of pointed \bcm s $(X,o)$ equipped with a closed subset of $\mathcal K(X)$.





\del{
---------------------------------------------------------------

Let $S$ be a boundedly-compact metric space. Roughly speaking, a \textit{particle process} in $S$ is a random \textit{discrete} collection of compact subsets of $S$. Here, this notion is connected to the framework of Section~\ref{sec:noncompact}. This framework also allows one to define a \textit{particle process in a random environment}.

\unwritten{1. Note that Fell on $\mathcal K(S)$ has less closed subsets than the Hausdorff metric but has more compact subsets!\\ 2. The two topologies on $\varphi(X)$ are not comparable. E.g., $A_n:=\{\{0,n\}\}$ tends to $\emptyset$ in one of them and tends to $\{\{0\}\}$ in the other.}
More precisely, let $\mathcal F(S)$ be the set of closed subsets of $S$ equipped with the Fell topology and $\mathcal K(S)$ be the set of nonempty compact subsets of $S$ equipped with the Hausdorff metric. 
Let $\varphi_1(S)$ be the set of discrete subsets $A$ of $\mathcal K(S)$; equivalently, every compact subset of $S$ should contain only finitely many elements of $A$. For particle processes, it is usually required that every compact subset of $S$ intersects only finitely many elements of $A$; equivalently, $A$ is also discrete in $\mathcal F(S)\setminus\{\emptyset\}$. Let $\varphi(S)$ be the set of such subsets of $\mathcal K(S)$. Then, a \defstyle{particle process} in $S$ is a point process in $\mathcal K(S)$ that is a supported on $\varphi(S)$. See e.g., \cite{bookScWe08} for more details\footnote{The usual definition of a particle process is a point process in $\mathcal F(S)\setminus\{\emptyset\}$ that is supported on $\varphi(S)$. {This approach is treated in Subsection~\ref{subsec:closedProcess} below.} In this approach, the topology of $\varphi(S)$ is different, but the corresponding Borel sigma-field is not changed. This definition is used in the literature to define local finiteness of measures on $\varphi(X)$, but continuity properties are treated by the first definition.}.

Let $\mathcal C'$ (resp. $\mathcal C'_1$) be the set of equivalence classes of tuples $(S,o;a)$, where $S$ is boundedly-compact, $o\in S$ and $a\in \varphi(S)$ (resp. $a\in\varphi_1(S)$). Below, the setting of Section~\ref{sec:noncompact} is used to define metrics on $\mathcal C'_1$, which also induces a metric on the subset $\mathcal C'$. This allows one to define a \defstyle{particle process in a random environment} as a random element in $\mathcal C'$. This is at the cost of considering elements of $\varphi(S)$ up to automorphisms of $(S,o)$ (one can also consider a marking to rule out the automorphisms similarly to Subsection~\ref{subsec:randomMeasure}). An example is random coverings of  random discrete pointed metric spaces by balls or by arbitrary finite sets, which is defined by another method in~\cite{I}.

For all compact metric spaces $X$, 
let $\tau(X)$ be the set of finite subsets of $\mathcal K(X)$. By identifying every finite set with the associated counting measure, one can equip $\tau(X)$ with the {Prokhorov} metric (see Remark~\ref{rem:particle} below for considering the Hausdorff metric on $\tau(X)$). 
First, it can be seen that $\tau$ is a functor that satisfies the continuity properties and the 1-Lipschitz properties of Definitions~\ref{def:functor-cont} and~\ref{def:functor-lipschitz}.
Let the partial order on $\tau(X)$ be that of inclusion.
Then, a truncation functor $\tau^t$ is defined as follows: For every isometric embedding $f:Y\to X$ and $a\in\tau(X)$, let $\tau^t_f(a):=\{f^{-1}(K): K\in a, K\subseteq f(Y) \}$.	
It can be seen that the assumptions of Section~\ref{sec:noncompact} are satisfied 
(note that $\tau$ and $\tau^t$ are obtained by a composition of two functors explained in Example~\ref{ex:compose}). In addition, the extension defined in Definition~\ref{def:phi} coincides with $\varphi_1$. So \eqref{eq:ghf-noncompact} defines a metric on $\mathcal C'_1$.
Note that since $\tau(X)$ is not complete, $\mathcal C'_1$ is also not complete. 
This issue is resolved by letting $\tau_2(X)$ be the set of boundedly-finite measures on $\mathcal K(X)$ (or the set of discrete multi-sets in $\mathcal K(X)$). For the functor $\tau_2$, one can define the truncations similarly according to Example~\ref{ex:compose} and it can be seen that the assumptions are satisfied. Let $\mathcal C'_2$ be the space defined in Definition~\ref{def:C'} for $\tau_2$. The results of Section~\ref{sec:noncompact} imply that $\mathcal C'_2$ is complete and separable. 
\unwritten{To show that it is a Borel subset, consider the condition that at most $M$ subsets intersect $\oball{r}{o}$.} It can be seen that $\mathcal C'_2$ contains $\mathcal C'_1$ and $\mathcal C'$ as  Borel subsets (similarly to the case of point processes). This allows one to define random elements in $\mathcal C'$, $\mathcal C'_1$ or $\mathcal C'_2$ as claimed above.

\begin{remark}
	\label{rem:particle}
	In the above arguments, if one equips $\tau(X)$ with the Hausdorff metric, similar arguments show that $\mathcal C'_1$ is a separable metric space, but its topology will be coarser. In this approach, the completion of $\mathcal C'_1$ is obtained by letting $\tau_2(X):=\mathcal K(\mathcal K(X))$ be the set of compact subsets of $\mathcal K(X)$ and proceeding similarly by using Example~\ref{ex:compose}.
\end{remark}
}

\subsection{Processes of Closed Subsets and Measures}
\label{subsec:closedProcess}
%
%

For a metric space $S$, let $\mathcal F(S)$ (resp. $\mathcal K(S)$) denote the set of closed (resp. compact and nonempty) subsets of $S$.

\subsubsection{Point Processes of Closed Subsets} 
\label{subsec:PPofClosed}	
%

Given a \bcm{} $S$, a point process in $\mathcal F(S)\setminus\{\emptyset\}$ can be called a \textit{point process of closed subsets of $S$} \new{(note that every compact subset of $S$ intersects at most finitely many elements of the point process)}. Examples of such processes are \textit{line processes} and \textit{hyperplane processes} in $\mathbb R^d$ (see e.g., \cite{bookScWe08}). Here, it is shown that this gives an instance of the framework of Section~\ref{sec:noncompact}. As before, this allows one to let $(S,o)$ be random as well. 

\ali{Let $\mathcal D_0$ be the space of pointed \bcm s $S$ equipped with a discrete subset of $\mathcal F(S)\setminus\{\emptyset\}$. To use the framework,}  for compact $X$, let $\tau(X)$ be the set of finite multi-sets in $\mathcal K(X)$ equipped with the {Prokhorov} metric.  \ali{This is just the functor $\tau_2$ in Subsection~\ref{subsec:particle}, but the following truncation functor and partial order make the story different}. For every isometric embedding $f:Y\to X$ and $a\in\tau(X)$, let 
$
\tau^t_f(a):=\{f^{-1}(K): K\in a, K\cap f(Y)\neq\emptyset \}.
$
Also, for $a,a'\in\tau(X)$, define $a'\leq a$ if there exists an injective function $h:a'\to a$ such that $\forall K\in a': K\subseteq h(K)$. 
It can be seen that these definitions satisfy the assumptions of Section~\ref{sec:noncompact}. 
\ali{The difficult part is to prove Assumptions~\ref{assump:subsetLemma1} and~\ref{assump:subsetLemma2}, but the proofs are omitted since they are identical to the proof of Lemma~\ref{lem:RMonClosed} below, except that all measures in the proof should be integer-valued (see Corollary~\Iref{cor:strassen-integer} of~\cite{Kh19ghp} for the integer-valued version of the generalized Strassen's theorem). Therefore, the results of Section~\ref{sec:noncompact} imply that the corresponding space $\mathcal D$ is complete and separable.

Here, $\mathcal D$ is the space of pointed \bcm s $S$ equipped with a discrete multi-set in $\mathcal F(S)\setminus\{\emptyset\}$. It can be seen that $\mathcal D$ contains $\mathcal D_0$ as a Borel subset. This enables one to define a (simple or non-simple) \textit{point process of closed subsets in a random environment}.}

\begin{remark}
The use of multi-sets is necessary due to the nature of the truncation functor. Also, one cannot equip $\tau(X)$ with the Hausdorff metric since Assumption~\ref{assump:subsetLemma1} would not hold. 
\end{remark}

\subsubsection{Closed Subsets of Closed Subsets} 
\label{subsec:RCofClosed}
For compact $X$, let $\tau_0(X):=\mathcal K(\mathcal K(X))$. Note that $\tau_0$ is a composition of functors as in Subsection~\ref{subsec:compos}. So the corresponding metric space $\mathcal C_{\tau_0}$  \unwritten{1. Example: BW in BCRT\\ 2. Question: What is the dual of a BW in a real tree? Is it again a BW?} is complete and separable. This allows one to define a random compact metric space $X$ equipped with a random element in $\mathcal K(\mathcal K(X))$. The same hold for the functor $\mathcal K(\tau^{(s)}(X))$. 

%

For \bcm s, composition of functors (Example~\ref{ex:compose}) gives $\varphi(X)=\mathcal F(\mathcal K(X))$ as mentioned in Subsection~\ref{subsec:particle}.
Here, we would like to use a different truncation to obtain the space $\mathcal D_0$ of pointed \bcm s $(X,o)$ equipped with a closed subset of $\mathcal F(X)\setminus\{\emptyset\}$. 
Unfortunately, it seems that $\mathcal D_0$ cannot be obtained by the framework of Section~\ref{sec:noncompact}\footnote{It seems that there is no useful partial order on $\tau_0(X):=\mathcal F(\mathcal K(X))$ here.}. 
In the following, we use the framework for a specific subset of $\mathcal D_0$.

For compact $X$, let $\tau(X)$ be the set of elements $a\in\mathcal F(\mathcal K(X))$ which are \textit{lower sets}; i.e., for every $K\in a$, every closed subset of $K$ belongs to $a$. Equip $\tau(X)$ with the Hausdorff extended metric and the inclusion partial order. For every isometric embedding $f:Y\to X$, define the truncation by $\tau^t_f(a):=\{f^{-1}(K): K\in a\}\setminus\{\emptyset\}$. 
It is easy to see that all of the assumptions of Section~\ref{sec:noncompact} are satisfied. For brevity, we only prove the following.

\begin{lemma}
The above definitions satisfy Assumptions~\ref{assump:subsetLemma1} and~\ref{assump:subsetLemma2}.
\end{lemma}
{
\begin{proof}
	Assume $f:X\to Z$, $g:Y\to Z$, $a_X\in \tau(X)$ and $a_Y\in \tau(Y)$ are such that $\hausdorff(f(X),g(Y))\leq\epsilon$ and $\hausdorff(\tau_f(a_X),\tau_g(a_Y))\leq\epsilon$. Assume $X'\subseteq X$ and $a'_X\in \tau(X')$ is such that $a'_X\subseteq a_X$ (if one regards $a'_X$ as an element of $\tau(X)$ by an abuse of notation). Let $Y':=N_{\epsilon}(X)\cap Y$ and $a'_Y:= \{K\in a_Y: \exists K'\in a'_X: \hausdorff(f(K'),g(K))\leq\epsilon \}$. It is easy to see that $a'_Y$ is in $\tau(Y')$ and satisfies the assumptions.
	%
\end{proof}
}

Here, $\mathcal D$ is the space of pointed \bcm s $(X,o)$ equipped with a lower set $a\in \mathcal F(\mathcal F(X)\setminus\{\emptyset\})$. So the results of Section~\ref{sec:noncompact} define a metric on $\mathcal D$ which make it a complete separable metric space.

\unwritten{
\begin{remark}
	\label{rem:F(F(X))}
	The problem with $\mathcal D_0$ is how to define a partial order on $\tau_0(X):=\mathcal F(\mathcal K(X))$. A naive candidate is $a_1\leq a_2$ when $\forall K_1\in a_1: \exists K_2\in a_2: K_1\subseteq K_2$,
	but it is not anti-symmetric. Note that under the equivalence relation $a_1\leq a_2$ and $a_2\leq a_1$, every element of $\tau_0(X)$ is equivalent to a closed lower set. This suggests to restrict attention to lower sets as above. Note however that the truncation can be defined for every $a\in \tau_0(X)$ to be the closure of $\{f^{-1}(K): K\in a\}\setminus\{\emptyset\}$. The same can be defined when $X$ is boundedly-compact and $a\in\mathcal F(\mathcal F(X)\setminus\{\emptyset\})$. One can ask whether the integral formula~\eqref{eq:integral} defines a useful metric on $\mathcal D_0$ using this truncation or not.
\end{remark}
}

\subsubsection{Measures on Closed Subsets}
\label{subsec:RMonClosed}

Here, we define a complete metric on the space $\mathcal D_f$ (resp. $\mathcal D_{lf}$) of pointed \bcm s $X$ equipped with a finite measure on $\mathcal F(X)$ (resp. a measure on $\mathcal F(X)\setminus\{\emptyset\}$ that is finite on compact subsets of $F(X)\setminus\{\emptyset\}$). For compact $X$, let $\tau(X):=\mathcal M_f(\mathcal K(X))$, where $\mathcal M_f(\cdot)$ denotes the set of finite Borel measures. For $g:Y\to X$ and $a\in\tau(X)$, define the truncation of $a$ as follows. Let $L:=\{K\in \mathcal K(X): K\cap g(Y)\neq\emptyset \}$ and let $\tau^t_g(a)$ be the push-forward of $\restrict{a}{L}$ under the function $K\mapsto g^{-1}(K)$, which is a function from $L$ to $\mathcal K(Y)$. Let the partial order on $\tau(X)$ be $a'\preccurlyeq a$ if there exists a 
Borel measure $\alpha$ on $\mathcal K(X)^2$ supported on $\{(K_1,K_2):K_1\subseteq K_2\}$ such that $\pi_{1*}\alpha=a'$ and $\pi_{2*}\alpha\leq a$, where $\pi_i$ denotes the projection onto the $i$th component. These definitions are different from the composition of functors in Example~\ref{ex:compose}. The following lemma proves that they satisfy Assumptions~\ref{assump:subsetLemma1} and~\ref{assump:subsetLemma2}. It is easy to verify the rest of the assumptions of Section~\ref{sec:noncompact} and to see that $\mathcal D=\mathcal D_{lf}$. 
Therefore, the results of Section~\ref{sec:noncompact} define a metric on $\mathcal D_{lf}$ that make it a complete separable metric space.

For the space $\mathcal D_f$, it is enough to modify the above definitions by $\tau'(X)=\mathcal F(\mathcal K(X)\cup\{\emptyset\})$ and letting the truncation of $a$ be the push-forward of $a$ (not $\restrict{a}{L}$) under the map $K\mapsto g^{-1}(K)$. Similar arguments can be used to deduce that $\mathcal D_f$ is a complete separable metric space (the proof is in fact simpler and will be generalized in Example~\ref{ex:RMonTau} below). Note that $\mathcal D_f$ induces a finer topology on $\mathcal D_f\cap\mathcal D_{lf}$ since mass cannot escape to infinity.


\begin{lemma}
\label{lem:RMonClosed}
The above definitions satisfy Assumptions~\ref{assump:subsetLemma1} and~\ref{assump:subsetLemma2}.
\end{lemma}

\begin{proof}
Assume $X,Y\subseteq Z$ are compact, $a\in \tau(X)$ and $b\in \tau(Y)$. One can regard $a$ and $b$ as elements of $\mathcal M_f(\mathcal K(Z))=\tau(Z)$. Assume $\hausdorff(X,Y)\leq \epsilon$ and $\prokhorov(a,b)\leq \epsilon$. Assume $X'\subseteq X$ and $a'\in \tau(X')$ such that $a'\preccurlyeq a$ (when regarding $a'$ as an element of $\tau(X)$). By definition, there exists a measure $\alpha$ supported on $\{(K_1,K_2):K_1\subseteq K_2\}$ such that $\pi_{1*}\alpha=a'$ and $\pi_{2*}\alpha\leq a$. Also, since $\prokhorov(a,b)\leq \epsilon$, Theorem~\Iref{thm:strassen} of~\cite{Kh19ghp} implies that there exists a measure $\beta$ on $\mathcal K(Z)^2$ such that $\pi_{1*}\beta\leq a$, $\pi_{2*}\beta\leq b$ and $||a-\pi_{1*}\beta|| + ||b-\pi_{2*}\beta|| + \beta(R^c)\leq\epsilon$, where $R:=\{(K_1,K_2):\hausdorff(K_1,K_2)\leq\epsilon\}$. Let $a_1:=\pi_{2*}\alpha\wedge\pi_{1*}\beta\leq a$. It can be seen that there exists $a'_1\leq a'$ and $\alpha_1\leq \alpha$ such that $\pi_{1*}\alpha_1=a'_1$ and $\pi_{2*}\alpha_1=a_1$ (for this, disintegrate $\alpha$ with respect to the second projection and then integrate it again with respect to $a_1$). Similarly, there exists $\beta_1\leq \beta$ such that $\pi_{1*}\beta_1=a_1$. Now, since $\pi_{2*}\alpha_1=\pi_{1*}\beta_1$, they can be extended to a measure $\gamma$ on $\mathcal K(X)^3$ which is \textit{almost} supported on $\{(K_1,K_2,K_3): K_1\subseteq K_2, \hausdorff(K_2,K_3)\leq\epsilon\}$ (the mass of $\gamma$ out of this set is at most $\beta_1(R^c)$). Let $\delta$ be the push-forward of $\gamma$ under the map $m(K_1,K_2,K_3):=(K_1,N_{\epsilon}(K_1)\cap K_3)$ and let $b':=\pi_{2*}\delta$. One has
\begin{eqnarray*}
	||a'-\pi_{1*}\delta|| + ||b'-\pi_{2*}\delta|| + \delta(R^c)
	\leq  ||a-\pi_{1*}\beta|| + 0 + \beta(R^c)
	\leq \epsilon.
\end{eqnarray*}
Therefore, Theorem~\Iref{thm:strassen} of~\cite{Kh19ghp} implies that $\prokhorov(a',b')\leq\epsilon$.
This implies that Assumption~\ref{assump:subsetLemma1} holds. 

For Assumption~\ref{assump:subsetLemma2}, one should modify the above construction as follows. Let $Y':=N_{\epsilon}(X)\cap Y$. 
Assuming $\restrict{a}{\cball{r}{o}}\preccurlyeq a'\preccurlyeq a$, one gets that $\restrict{a'}{\cball{r}{o}}=\restrict{a}{\cball{r}{o}}$. This implies that $\alpha$ should be supported on $\{(K_1,K_2): K_2\cap\cball{r}{o}\subseteq K_1\subseteq K_2 \}$ (more details are provided in Lemma~\ref{lem:RMonTau} below).
Also, $\gamma$ will be \textit{almost} supported on $R':=\{(K_1,K_2,K_3): K_2\cap\cball{r}{o}\subseteq K_1\subseteq K_2, \hausdorff(K_2,K_3)\leq\epsilon \}$. Then, outside $R'$, change the map $m$ by $m(K_1,K_2,K_3):=(K_1,K_3\cap Y')$. This way, the second coordinate of $m(K_1,K_2,K_3)$ always contains $K_3\cap\cball{r-2\epsilon}{o}$. In the end, let $b':=\pi_{2*}\delta+\restrict{(b-\pi_{2*}\beta_1)}{\cball{r-2\epsilon}{o}}$. 
First, note that the property of $m$ implies that $\restrict{(\pi_{2*}\beta_1)}{\cball{r-2\epsilon}{o}}\leq \pi_{2*}\delta$. As a result, $\restrict{b}{\cball{r-2\epsilon}{o}}\preccurlyeq b'$. So it remains to prove that $\prokhorov(a',b')\leq \epsilon$.

Since $\restrict{a}{\cball{r}{o}}=\restrict{a'}{\cball{r}{o}}$, one gets that $a-\pi_{2*}\alpha$ is supported on $\{K: K\cap \cball{r}{o}=\emptyset \}$. So the definition of $R$ implies that $\pi_{2*}\beta-\pi_{2*}\beta_1$ is almost supported on $\{K:K\cap\cball{r-2\epsilon}{o}=\emptyset\}$ and the mass outside this set is at most $\beta(R^c)-\beta_1(R^c)$. The part supported on the set dies when restricting to $\cball{r-2\epsilon}{o}$, and hence,  $||\restrict{(\pi_{2*}\beta-\pi_{2*}\beta_1)}{\cball{r-2\epsilon}{o}}||\leq \beta(R^c)-\beta_1(R^c)$. So, one obtains that $||\restrict{(b-\pi_{2*}\beta_1)}{\cball{r-2\epsilon}{o}}||\leq ||b-\pi_{2*}\beta|| + \beta(R^c)-\beta_1(R^c)$. Therefore,
\begin{eqnarray*}
	&&||a'-\pi_{1*}\delta|| + ||b'-\pi_{2*}\delta|| + \delta(R^c)\\
	&\leq & ||a-\pi_{1*}\beta|| + ||b-\pi_{2*}\beta|| + \beta(R^c)-\beta_1(R^c) + \delta(R^c)\\
	&\leq& \epsilon.
\end{eqnarray*}
The last inequality holds because $\delta(R^c)\leq\beta_1(R^c)$. So the claim $\prokhorov(a',b')\leq \epsilon$ is implied by Theorem~\Iref{thm:strassen} of~\cite{Kh19ghp} and the proof is completed.
\end{proof}

\del{\begin{remark}
	\label{rem:closedProcess}
	If one equips $\tau(X)$ with the Hausdorff metric in the above definition, then Assumptions~\ref{assump:subsetLemma1} and~\ref{assump:subsetLemma2} do not hold. However, the author has not checked whether~\eqref{eq:variantMetric} can be used to define a metric on $\mathcal D$ and to make it a Polish space or not.
\end{remark}
}

\subsubsection{Two-Level Measures and Measures on $\tau_0(X)$}
\label{subsec:two-level}
In~\cite{Me18}, it is shown that the set of compact metric spaces $X$ equipped with a \textit{two-level measure}; i.e., a finite measure on the set of finite measures on $X$, is a Polish space. 
\ali{The metric considered in~\cite{Me18} (Definition~4.1 of~\cite{Me18}) is a GP-type metric. If one considers a GHP-type metric in a similar manner, then the result will be identical to the composition functor $\tau(X):=\mathcal M_f(\mathcal M_f(X))$
and Polishness is implied by Subsection~\ref{subsec:compos}. Note that $\mathcal M_f(X)$ is not compact, but one can still use Remark~\ref{rem:composition} for the composition.}

One should be careful for extending this to \bcm s. The composition of functors in Example~\ref{ex:compose} does not work here, since Assumption~\ref{assump:subsetLemma1} is not satisfied. The reason is that a small perturbation of a measure might change the support of the measure significantly. If it worked, then $\varphi(X)$ would be the set of measures on the set of compactly-supported measures on $X$ (with a specific locally-finiteness condition). It is not clear whether the integral formula~\eqref{eq:integral} is useful in this case or not.

To extend to \bcm s, one can change the partial order and the truncation maps according to the general construction in Example~\ref{ex:RMonTau} below. Then, when $X$ is boundedly-compact, $\varphi(X)=\mathcal M_f(\mathcal M_{bf}(X))$, where $\mathcal M_{bf}(X)$ is the set of boundedly-finite measures on $X$. In addition, all assumptions are satisfied and the corresponding space $\mathcal D$ is complete and separable.

\begin{example}[Measures on $\tau_0(X)$]
\label{ex:RMonTau}
Let $\tau_0$ and $\tau^t_0$ be functors that satisfy the assumptions of Section~3. Let $\tau(X):=\mathcal M_f(\tau_0(X))$ (or the set of probability measures on $\tau_0(X)$) equipped with the Prokhorov metric. This is useful for comparing two metric spaces equipped with \textit{random} additional structure (e.g., in Subsection~\ref{subsec:cadlagprocess}). Define the partial order on $\tau(X)$ by $a_1\preccurlyeq a_2$ when there is a coupling of $a_1$ and $a_2$ supported on $\{(K_1,K_2)\in\tau_0(X)^2: K_1\leq K_2\}$. For an isometry $g:Y\to X$ and $a\in\tau(X)$, define the truncation $\tau^t_g(a)$ to be the pushforward of $a$ under the map $(\tau^t_0)_g:\tau_0(X)\to\tau_0(Y)$, which makes sense by assuming the following (note that these definitions are different from the composition of functors in Example~\ref{ex:compose}).

\begin{assumption}
	\label{assump:RMonTau}
	Assume the following further conditions: (i) $\tau_0(X)$ is always separable, (ii) the relation $\leq$ is a closed subset of $\tau_0(X)^2$, (iii) the truncation maps $(\tau_0^t)_g$ are Borel measurable and (iv) the choice of $\mathcal Y'$ in Assumptions~\ref{assump:subsetLemma1} and~\ref{assump:subsetLemma2} can be taken to be a Borel measurable function of $\mathcal X'$.
\end{assumption}

\ali{
	\begin{proposition}
		In Example~\ref{ex:RMonTau}, all of the assumptions of Section~\ref{sec:noncompact} are satisfied. For a \bcm{} $X$, the set of additional structures is $\varphi(X)=\mathcal M_f(\varphi_0(X))$. Hence, under the assumptions of Theorem~\ref{thm:functor-polish-noncompact}, the corresponding space $\mathcal D$ is a metric space and is Polish.
	\end{proposition}
	
	This result can be proved similarly to Subsection~\ref{subsec:RMonClosed}. Here, we only prove the following lemma and the rest is skipped.
}

\end{example}

\begin{lemma}
\label{lem:RMonTau}
The above definitions satisfy Assumptions~\ref{assump:subsetLemma1} and~\ref{assump:subsetLemma2}.
\end{lemma}
\begin{proof}
The proof is almost identical to that of Lemma~\ref{lem:RMonClosed}. Here, we only highlight the differences for brevity. 
Here, $\alpha$ is a coupling of $a'$ and $a$. Hence, the situation is simpler:  $\pi_{2*}\alpha=a$, $a_1=\pi_{1*}\beta$ and $\beta_1=\beta$. The measure $\gamma$ is almost supported on the set $R$ of tuples $(K_1,K_2,K_3)\in\tau_0(Z)^3$ such that $K_1\in\tau_0(X')$, $K_1\leq K_2$ and $d(K_2,K_3)\leq\epsilon$. In addition, if $\restrict{a}{\cball{r}{o}}\leq a'$, one might add the condition $\restrict{K_2}{\cball{r}{o}}\leq K_1$ (this is because $\alpha$ induces a coupling of $\restrict{a}{\cball{r}{o}}=\restrict{a'}{\cball{r}{o}}$ and itself that is supported on $\{(K_1,K_2):K_1\leq K_2\}$, and hence, Lemma~\ref{lem:coupling} below implies that the latter is the trivial coupling). To define the map $m$ on $R$, one needs to use Assumption~\ref{assump:RMonTau} to find $K_4:=m(K_1,K_2,K_3)\leq K_3$ such that $K_4\in\tau_0(Y')$ and $d(K_1,K_4)\leq d(K_2,K_3)\leq \epsilon$. In addition, if $\restrict{K_2}{\cball{r}{o}}\leq K_1$, then $\restrict{K_3}{\cball{r-2\epsilon}{o}}\leq K_4$. The rest of the proof is identical to that of Lemma~\ref{lem:RMonClosed}.
%
\end{proof}

\begin{lemma}
\label{lem:coupling}
Let $E$ be a separable metric space and $\leq$ be a partial order on $E$ which is a closed subset of $E^2$. Assume $X$ and $Y$ are random elements of $E$ that have the same distribution and $X\leq Y$ a.s. Then, $X=Y$ a.s.
\end{lemma}
\begin{proof}
If $A$ is a Borel lower set in $E$, then $\myprob{X\in A}\geq \myprob{Y\in A}=\myprob{X\in A}$. So, $\myprob{X\in A, Y\not\in A}=0$. Therefore, it is enough to find countably many closed lower sets in $E$ that separate every pair $e_1<e_2$. Let $x_1,x_2,\ldots$ be a countable dense set in $E$. Let $A_{m,n}$ be the closure of the set of elements below $\cball{1/n}{x_m}$. Let $e_1<e_2$ be an arbitrary pair. For every $n$, find $m=m(n)$ such that $d(e_1,x_m)\leq 1/n$. It is easy to deduce that one of these sets $A_{m(n),n}$ separates $e_1$ and $e_2$ (otherwise, one finds sequences $f_n\leq g_n$ such that $f_n\to e_2$ and $g_n\to e_1$, which contradicts closedness of the relation). This completes the proof.
\end{proof}

\del{
subsection{The Brownian Web}
\label{subsec:BW}
In this subsection, the Brownian web is heuristically described first. The point of interest is the state space of the Brownian web and its connection to the setting of Section~\ref{sec:noncompact}. 
This is also used to generalize the Brownian web.

Let $(x_1,t_1),(x_2,t_2),\ldots \in\mathbb R^2$ be given. For each $i$, let $\gamma_i(t)$ be a Brownian motion in the real line with initial condition $\gamma_i(t_i)=x_i$. Assume that these Brownian motions are (jointly) independent until they meet and \textit{coalesce} as soon as they meet; i.e., if $\gamma_i(s)=\gamma_j(s)$ for some $s$, then the same equality holds for all $t\geq s$. These curves are called \textit{coalescing Brownian motions} and are easy to construct inductively. Heuristically, the Brownian web is a set of coalescing Brownian motions starting at \textit{all} space-time points $(x,t)\in\mathbb R^2$. Since there are uncountably many space-time points, some work is needed for a precise formulation of this heuristic. The standard method is to consider the set $\Gamma$ of curves defined for all $(x,t)\in \mathbb Q^2$ and to define the \defstyle{Brownian web} as the closure of $\Gamma$ in a suitable sense, discussed below. This implies that more than one curve may start at a common starting point $(x,t)\in \mathbb R^2$. The \defstyle{Brownian web in the circle} is defined similarly by considering coalescing Brownian motions in the circle. See~\cite{FoNeRa04} for more details on the Brownian web.

To complete the above construction, one should specify the topological space in which the closure of $\Gamma$ is taken. For this, usually a suitable compactification $E$ of $\mathbb R^2$ is considered and every curve $\gamma_i$ in the above discussion is regarded as a compact subset of $E$. Let $\mathcal K(E)$ be the set of compact subsets of $E$ equipped with the Hausdorff metric and let $\Pi\subseteq K(E)$ be the subset corresponding to curves. Then, the Brownian web is the closure of $\Gamma$ as a subset of $\mathcal K(E)$. More precisely, the Brownian web is a random element of $\mathcal K(\mathcal K(E))$, which is a Polish space. Also, it is a subset of $\Pi$ almost surely.

Now, an alternative state space for the Brownian web and its connection to this work is discussed. First, the compact case is studied. Let $X$ be a compact metric space such that Brownian motion in $X$ is defined; e.g., the circle. 
Let $\tau^{(s)}(X)$ be the set of closed subsets of $X\times \mathbb R$. Example~\ref{ex:marks2} shows how to define a metric on $\tau^{(s)}(X)$ such that it is a compact metric space (see also the Fell topology in Subsection~\ref{subsec:randomMeasure}).
Let $\tau(X):=\mathcal K\left(\tau^{(s)}(X) \right)$ be the set of compact subsets of $\tau^{(s)}(X)$ equipped with the Hausdorff metric. \unwritten{Issue: Is this construction of BW in $X$ measurable?}
Since the graph of every continuous function in $X$ is an element of $\tau^{(s)}(X)$, one can define $\Gamma$ similarly to the above definition and let the \defstyle{Brownian web in $X$} be a random element in $\tau(X)$ which is the closure of $\Gamma$ in $\tau^{(s)}(X)$. So $\tau(X)=\mathcal K(\tau^{(s)}(X))$ can be used as the state space of the Brownian web in $X$. It can be seen that if $X$ is a circle, then the topology induced on the set of closed subsets of $\Pi$ is equivalent to the topology induced from $\mathcal K(\mathcal K(E))$ in the previous method. So this indeed generalizes the Brownian web. 

In addition, note that $\tau(X)=\mathcal K(\tau^{(s)}(X))$ is a composition of functors as in Subsection~\ref{subsec:compos} and satisfies the assumptions in the subsection. Therefore, the metric space $\mathcal C_{\tau}$ is defined and \unwritten{1. Example: BW in BCRT\\ 2. Question: What is the dual of a BW in a real tree? Is it again a BW?} is complete and separable. This can be used to define \defstyle{the Brownian web in a random compact environment} by letting $X$ be random; i.e., to define it  as a random element in $\mathcal C_{\tau}$ (one can also assume additional structures on $X$ and repeat the arguments).

The boundedly-compact case is more subtle and it is difficult to implement the framework of Section~\ref{sec:noncompact}. 
Consider a fixed boundedly-compact metric space  $X_0$  such that Brownian motion in $X_0$ is defined. A direct idea for defining the Brownian web in $X_0$ is to fix an origin for $X_0$ and to define $\tau^{(s)}(X_0)$ and $\mathcal K(\tau^{(s)}(X_0))$ similarly to the compact case. This works well, but does not fit into the framework of Section~\ref{sec:noncompact}.  Here are some failed attempts to implement the framework of Section~\ref{sec:noncompact}. For all boundedly-compact metric spaces $X$, let $\varphi(X):=\mathcal K(\mathcal F(X))$ be the set of compact collections of closed subsets of $X$ (one can treat $\mathcal K(\tau^{(s)}(X))$ similarly). Let $\tau$ be the restriction of $\varphi$ to compact metric spaces. If one wants to define the truncation functor similarly to Subsection~\ref{subsec:particle}, then the extension of $\tau$ in Definition~\ref{def:phi} is not equivalent to $\varphi$ (it is the set of compact collections of \textit{compact} subsets of $X$). A natural guess is to define the truncation functor by
\[
\tau^t_f(a):= \{f^{-1}(K):K\in a, K\cap f(Y)\neq\emptyset\}
\]
for \unwritten{I think that $\tau^t$ needs not be a functor! It is enough to define truncation to balls and this truncation is associative for two balls with the same center.}
all isometric embeddings $f:Y\to X$ and $a\in\tau(X)$. But the problem is that the RHS is not necessarily a closed subset of $\mathcal F(Y)$. If we let $\tau^t_f(a)$ be the closure of the RHS, then the problem is that $\tau^t$ is not a functor. Nevertheless, by using this idea, \eqref{eq:variantMetric} is a well defined pseudo-metric on the space $\mathcal D$ of Definition~\ref{def:C'} (the integrand can be seen to be c\`adl\`ag). This gives rise to the following problem.

\begin{problem}
	In the above discussion, does \eqref{eq:variantMetric} define a metric on $\mathcal D$? If so, is $\mathcal D$ complete and separable?
\end{problem}

}

\subsection{Isometries}
\label{subsec:isometries}
\ali{In this subsection, we apply the framework to metric spaces equipped with an isometry or a group of isometries.}
\subsubsection{Convergence of Isometries}
\label{subsec:isometry}
Gromov defined convergence of pointed \bcm s $(X,o)$ equipped with an isometry $h:X\to X$ (Section~6 of~\cite{Gr81}). This is used to prove that the Gromov-Hausdorff limit of a sequence of homogeneous spaces (i.e., the isometry group acts transitively) is homogeneous. Note that the set $\varphi(X)$ of such isometries is not a functor, since an isometric embedding $f:X\to Y$ does not induce a natural map from $\varphi(X)$ to $\varphi(Y)$. However, by identifying every isometry $h$ with its graph, which is a closed subset of $X\times X$, isometries are special cases of 2-fold marked closed subsets. 
This way, it can be seen that the notion of convergence is the same as that of Example~\ref{ex:measures4}. In addition,
Example~\ref{ex:measures4} provides a complete metrization of Gromov's notion of convergence.

\subsubsection{Equivariant Hausdorff Convergence}
\label{subsec:equivariantGH}
Fukaya~\cite{Fu86convergence} 
defined a metric on the space $\mathcal T$ of triples $(X,\Gamma,p)$, where $(X,p)$ is a pointed \bcm{} and $\Gamma$ is a closed subgroup of the isometries of $X$. This is called the \textit{equivariant Hausdorff distance} in~\cite{Fu86convergence}. For brevity, we only mention the notion of convergence in $\mathcal T$ in an equivalent form (after some correction\footnote{In fact, the definition in~\cite{Fu86convergence} is flawed and does not satisfy the triangle inequality (see Remark~\ref{rem:flaws}). The issue still exists in the modifications of the metric given in~\cite{FuYa92fundamental} and~\cite{Fu88boundary}. It is corrected here by the same idea as~\eqref{eq:beingclose-khodam}.
}) and show that this convergence is a special case of the framework of this paper. In addition, a completeness and separability result for $\mathcal T$ is obtained.

Let $\mathcal X_n:=(X_n,\Gamma_n,p_n)$ be a sequence in $\mathcal T$. Call $(\mathcal X_n)_n$ convergent to $\mathcal X:=(X,\Gamma,p)$ 
if and only if they can be isometrically embedded in a common \bcm{} $Z$ such that, after the embedding, (i) $p_n$ converges to $p$, (ii) $X_n$ converges to $X$ in the Fell topology, (iii) For every $\epsilon>0$, for large enough $n$ and every $\gamma_n\in\Gamma_n$, the graph of $\gamma_n$ (regarded as a closed subset of $Z^2$) is $\epsilon$-close to the graph of some $\gamma\in\Gamma$ in some metrization of the Fell topology (e.g., \eqref{eq:integral} or Example~\ref{ex:Fell-metrization}) and (iv) the same holds by swapping $X_n$ and $X$. The last two conditions can be summarized as $\hausdorff(\Gamma_n,\Gamma)\to 0$, where $\Gamma_n$ and $\Gamma$ are regarded as closed subsets of $\mathcal F(Z^2)$; i.e., $\Gamma_n$ is close to $\Gamma$ as elements in $\tau_0(Z):=\mathcal K(\mathcal F(Z^2))$. It is good to mention that for compact metric spaces, a stronger notion of convergence is obtained by requiring $Z$ to be compact and by equipping $\mathcal F(Z^2)$ with the Hausdorff metric.

To express this convergence in the framework of this paper, we proceed similarly to Subsection~\ref{subsec:RCofClosed}. As mentioned therein, we need to focus on the set $\tau(Z)$ of closed lower sets in $\mathcal F(Z^2)$, which is smaller than $\tau_0(Z)$. Similarly to Subsection~\ref{subsec:RCofClosed}, one can define the truncation and partial order and show that the corresponding space $\mathcal D$ is complete and separable. Now, for $\mathcal X_n$ and $\mathcal X$ mentioned above, 
define $a:=\bigcup_{\gamma\in\Gamma_n}\mathcal F(\mathrm{gr}(\gamma))\in\tau(X)$, where $\mathrm{gr}(\gamma)\subseteq X^2$ is the graph of $\gamma$. Note that $a$ is a closed lower set in $\mathcal F(X^2)$. Define $a_n\in\tau(X_n)$ similarly. 

\begin{theorem}
$(X_n,\Gamma_n,p_n)$ converges to $(X,\Gamma,p)$ in the equivariant Hausdorff metric if and only if $(X_n,p_n;a_n)$ converges to $(X,p;a)$ in $\mathcal D$. In addition, the space $\mathcal T$ is complete and separable under the metric induced from $\mathcal D$.
\end{theorem}
\begin{proof}[Sketch of the proof.]
Assume $\mathcal X_n\to\mathcal X$ in $\mathcal T$.
The definition of convergence in $\mathcal T$, mentioned above, implies that $\mathcal X'_n:=(X_n,p_n;a_n)$ converges to $\mathcal X':=(X,p;a)$ in $\mathcal D$. Conversely, assume $\mathcal X'_n\to \mathcal X'$. By Lemma~\ref{lem:convergence2}, one can assume that $X_n$ and $X$ are subsets of a common \bcm{} $Z$ such that $p_n\to p$, $X_n\to X$ in the Fell topology and $a_n\to a$ as elements in $\tau(Z)$. So, for large $n$, the graph of every $\gamma_n\in\Gamma_n$ is close to some subset $S$ of the graph of some $\gamma\in\Gamma$. One has $S=\mathrm{gr}(\restrict{\gamma}{D})$ for some $D\subseteq X$. 
Since $\gamma_n$ is defined on the whole $X_n$ and $X_n$ is close to $X$, one obtains that $D$ is close to $X$ in the Fell topology. This implies that $\gamma_n$ is close to $\gamma$ as well. Similarly, every $\gamma\in\Gamma$ is close to some $\gamma_n\in\Gamma_n$. Therefore, $\mathcal X_n\to\mathcal X$ in $\mathcal T$. The second part of the claim is also implied by the fact that $\mathcal T$ is a closed subset of $\mathcal D$, which is straightforward to prove.
\end{proof}

\subsubsection{Convergence of Metric $G$-Spaces}
\label{subsec:G-spaces}
Given a group $G$, let $\mathcal G_1$ be the space of pointed \bcm s $(X,o)$ equipped with an action of $G$ on $X$ by isometries, which are called \textit{metric $G$-spaces}. If $G$ is a $\sigma$-compact topological group, define $\mathcal G_2$ similarly by considering only continuous actions. The papers~\cite{Fu88boundary} and~\cite{MoPy14} define a metric on $\mathcal G_1$ and a topology on $\mathcal G_2$ respectively. In what follows, after some correction,\footnote{In fact, both definitions should be corrected according to Remark~\ref{rem:flaws}. The first does not satisfy the triangle inequality and the second requires convergence of every ball, which is not compatible with Gromov's notion of convergence of pointed metric spaces.} the topologies on $\mathcal G_1$ and $\mathcal G_2$ are expressed in the framework of this paper.

Let $\mathcal X_n:=(X_n,o_n; \pi_n)$, where $\pi_n$ is an action of $G$ on $X_n$ by isometries.
As in Subsection~\ref{subsec:isometry}, we identify every isometry of $X$ with its graph, which is a closed subset of $X^2$. Then, $\pi_n$ can be regarded as a function from $G$ to $\mathcal F(X_n^2)$. It can be seen that $\mathcal X_n$ converges to $\mathcal X:=(X,o;\pi)$ in the sense of~\cite{Fu88boundary} (resp. \cite{MoPy14}) if and only if they can be embedded in a common \bcm{} $Z$ such that, after the embedding, $o_n\to o$, $X_n\to X$ in the Fell topology and $\pi_n\to\pi$ uniformly (resp. uniformly on compact subsets of $G$). In the latter, $\pi_n$ and $\pi$ are regarded as functions from $G$ to $\mathcal F(Z^2)$ and the space $\mathcal F(Z^2)$ is equipped with the metrization of the Fell topology in Example~\ref{ex:Fell-metrization} (note that the choice of any root in Example~\ref{ex:Fell-metrization} leads to the same notion of convergence here). \unwritten{The latter is strictly weaker. For example, $\mathbb R$ acts an $\mathbb R^2$ by rotations by angle $\lambda_n x$, where $\lambda_n\to 0$. This helps for separability.}

%
%

To use the framework of this paper, when $X$ is compact, let $\tau(X)$ be the set of functions from $G$ to $\mathcal F(X^2)$ equipped with the sup metric (equip $\mathcal F(X^2)$ with the Hausdorff extended metric). 
If $a:G\to\mathcal F(X^2)$ and $f:Y\to X$, define the truncation of $a$ by truncating the sets $a(g)$ separately for all $g\in G$ (note that the truncation of an isometry of $X$ is not necessarily an isometry of $Y$). Also, define the partial order on $\tau(X)$ by $a_1\leq a_2$ if and only if $\forall g\in G: a_1(g)\subseteq a_2(g)$.
It can be seen that the assumptions of Section~\ref{sec:noncompact} are satisfied except separability of $\tau(X)$ and compactness of the cones. Also, the extension $\varphi(X)$ to \bcm s $X$ is the set of functions from $G$ to $\mathcal F(X^2)$ and the topology on $\varphi(X)$ is that of uniform convergence with respect to the metrization of the Fell topology in Example~\ref{ex:Fell-metrization}. Therefore, Lemma~\ref{lem:topology} implies that the topology of $\mathcal G_1$ is just the restriction of the topology of the corresponding metric space $\mathcal D$. 

For $\mathcal G_2$, one can proceed similarly by equipping $\tau(X)$ with a suitable metrization of uniform convergence on compact subsets of $G$. For instance, let $G_1\subseteq G_2\subseteq \cdots$ be an exhaustion of $G$ by compact subsets. For (not necessarily continuous) functions $\pi_i:G\to \mathcal F(X^2)$ ($i=1,2$), let $d(\pi_1,\pi_2):=\inf\{\epsilon\leq 1: \dsup(\restrict{\pi_1}{G_{\lfloor 1/\epsilon\rfloor}}, \restrict{\pi_2}{G_{\lfloor 1/\epsilon\rfloor}})\leq\epsilon \}$ similarly to~\eqref{eq:curves-metric}. Similar claims hold and the corresponding space $\mathcal D$ extends $\mathcal G_2$ and its topology. However, it is not clear whether $\mathcal G_1$ and $\mathcal G_2$ are separable (note that we cannot restrict attention to continuous functions $\pi$ in the above approach, since the truncation of a continuous function is not necessarily continuous).


\del{
subsection{Networks and Marked Discrete Spaces}
\label{subsec:graphs}
The Benjamini-Schramm metric~\cite{BeSc01} is defined between rooted graphs which can be used to study the limit of a sequence of sparse graphs. This metric is extended in~\cite{processes} to rooted \textit{networks}, where a network is a graph $G$ in which a mark is assigned to every vertex and every pair of adjacent vertices (for simplicity, we restrict attention to simple graphs here). By Example~\ref{ex:marking-function}, networks are a special case of metric spaces equipped with a 1-fold marked closed subset (for the marks of the vertices) and a 2-fold marked closed subset (for the marks of pairs). So the set $\mathcal G_*$ of \textit{locally-finite} rooted networks is a subset of the set $\mathcal D$ defined in Definition~\ref{def:C'} for suitable functors. It can be seen that $\mathcal G_*$ is a closed subset of $\mathcal D$ and also the metric defined in~\cite{processes} is equivalent to (the restriction of) the metric defined on $\mathcal D$ in Section~\ref{sec:noncompact}.

Also, \cite{I} considers the set $\mathcal D'$ of boundedly-finite pointed discrete metric spaces in which a mark is assigned to every point and every pair of points.  Similarly, one can show that this set is a subset of the set $\mathcal D$ defined in Definition~\ref{def:C'}. Here, let the additional structure on metric spaces be a 1-fold marked closed subset, a 2-fold marked closed subset and a measure (for the latter, consider the counting measure).
Similarly, one can show that $\mathcal D'$ is a Borel subset of $\mathcal D$ and the metric on $\mathcal D'$ generates the same topology as the restriction of the metric of $\mathcal D$. See~\cite{Kh19ghp} for more details of the arguments and further discussion (which are provided for non-fold marked discrete spaces).

%
%
%

subsection{Examples of Multiple Additional Structures in the Literature}
\label{subsec:k-point}
In some literature, metric spaces with more than one distinguished point are considered. For example, \cite{processes} considers the set of equivalence classes of graphs or networks equipped with two distinguished vertices. This is also generalized in~\cite{I} to {discrete spaces} (see Example~\ref{ex:double}).

In~\cite{Mi09} compact metric spaces equipped with $k$ distinguished closed subsets are studied. This is a special case of the construction in Section~\ref{sec:compact} and the metric in~\cite{Mi09} is identical to~\eqref{eq:ghfunctor} (see Examples~\ref{ex:closedSubsets} and~\ref{ex:multiple}). Proposition~9 of~\cite{Mi09} is also a special case of Proposition~\ref{prop:strassen-multiple}.

Also, \cite{AdBrGoMi17} considers the set of (equivalence classes of) compact metric spaces equipped with $k$ distinguished points and $l$ finite Borel measures. According to Sections~\ref{sec:compact}, the spaces $\mathcal C_{\tau}$ generalize these spaces. 

There are also various papers which consider metric spaces equipped with a measure and another structure. These cases will be discussed in the forthcoming subsections.
}

\subsection{C\`adl\`ag Curves and Processes}
\label{subsec:cadlagprocess}
Let $X_n$ be a metric space and $\eta_n$ be a c\`adl\`ag curve in $X_n$. In Definition~1.3 of~\cite{AtLoWi17}, it is said that $(X_n,\eta_n)$ converges to $(X,\eta)$ if they can be embedded in a common metric space such that the images of the curves $\eta_n$ converges to the image of $\eta$ in the Skorokhod metric. If $\eta_n$ is a random curve in $X_n$, a similar notion is defined by requiring that the images of the random curves converge weakly. This is used in~\cite{AtLoWi17} for studying limits of random walks on trees. However, \cite{AtLoWi17} does not define a metrization of this notion of convergence.

Here, we concentrate on \bcm s \ali{and we add the natural condition that the image of $X_n$ converges to the image of $X$ after embedding in the common metric space. Also, we consider $\eta(0)$ as the root}. For deterministic curves, this notion of convergence is the same as that of Subsection~\ref{subsec:cadlag2} (see Lemma~\ref{lem:convergence2}). 
The case of random c\`adl\`ag curves is also obtained by composition with the functor of measures, which is discussed in Example~\ref{ex:RMonTau} (verification of Assumption~\ref{assump:RMonTau} is skipped). Therefore, the framework of Section~\ref{sec:noncompact} provides a metrization of these notions of convergence. Note that these metrizations are not complete (the completion can be obtained by adding curves that blow up at a finite time) but the space is still Polish, as explained in Subsection~\ref{subsec:cadlag2}. If one restricts attention to compact metric spaces only, then one can use the simpler setting of Section~\ref{sec:compact} (discussed in Subsection~\ref{subsec:cadlag}) and the metrization is complete.

\subsection{Ends}
\label{subsec:ends}
Ends are defined in~\cite{Fr31} for all topological spaces and graphs and, heuristically, are the \textit{points at infinity}. 
For simplicity, we only consider the class $\mathfrak L$ of \bcm s $X$ such that $X$ is either a simple connected graph (equipped with the graph distance metric) or it is \textit{locally-connected}\del{; i.e., every  neighborhood of every point in $X$ contains another neighborhood which is connected}. In this case, given a point $o$ of $X$, an end $\xi$ of $X$ can be uniquely described by a sequence of closed sets $\xi_1\supseteq \xi_2\supseteq\cdots$, where $\xi_n$ is a connected component of $X\setminus\oball{n}{o}$ for each $n$, where $\oball{n}{o}$ is the \textit{open ball} of radius $n$ centered at $o$.\del{ Ends of graphs are defined similarly by using the notion of connectedness in graphs.} 
So the set $\mathcal L$ of (equivalence classes of) tuples $(X,o,\xi)$, where $X\in\mathfrak L$, $o\in X$ and $\xi$ is an end of $X$, can be regarded as a subset of the space $\mathcal D$ defined in Section~\ref{sec:noncompact}. 
Here, the additional structure is a sequence of closed subsets (see Example~\ref{ex:multiple-noncompact}).
It can be seen that $\mathcal L$ is a closed subset of $\mathcal D$. Therefore, the metric on $\mathcal D$ can be used to make $\mathcal L$ a Polish space. This allows one to define random metric spaces equipped with an end.
A similar idea can be used to define random metric spaces equipped with a closed subset of ends, which is skipped here.

\del{
--------------------------------------

Ends are defined in~\cite{Fr31} for all topological spaces $X$ which, heuristically, are the \textit{points at infinity} of $X$. A similar notion is defined for graphs. 
In this subsection, the setting of Section~\ref{sec:noncompact} is used to study the set of boundedly-compact pointed metric spaces equipped with an {end}. Similar cases are considered in the literature with a different metrization (see e.g., \cite{KaSo10} for trees)\mar{Are the metrics or Borel structures equivalent?}. Also, we study the set of boundedly-compact pointed metric spaces equipped with a closed subset of ends. A similar set is considered in~\cite{eft} for trees without going into details.

For simplicity, we restrict attention to the class $\mathfrak L$ of boundedly-compact metric spaces $X$ such that $X$ is either a simple connected graph (equipped with the graph distance metric) or it is locally-connected; i.e., every  neighborhood of every point in $X$ contains another neighborhood which is connected.
The general definition of ends is skipped for brevity. Given a point $o$ of $X$, 
an end $\xi$ of $X$ can be uniquely described by a sequence of closed sets $\xi_1\supseteq \xi_2\supseteq\cdots$, where $\xi_n$ is a connected component of $X\setminus\oball{n}{o}$ for each $n$, where $\oball{n}{o}$ is the \textit{open ball} of radius $n$ centered at $o$. Ends of graphs are defined similarly by using the notion of connectedness in graphs. 
So the set $\mathcal L$ of (equivalence classes of) tuples $(X,o,\xi)$, where $X\in\mathfrak L$, $o\in X$ and $\xi$ is an end of $X$, can be regarded as a subset of the space $\mathcal D$ defined in Section~\ref{sec:noncompact}. Here, the corresponding functor $\varphi$ is such that $\varphi(X)$ is the set of sequences of closed subsets of $X$ as in Examples~\ref{ex:closedSubsets-noncompact} and~\ref{ex:multiple-noncompact}. It can be seen that $\mathcal L$ is a closed subset of $\mathcal D$. Therefore, the metric on $\mathcal D$ can be used to make $\mathcal L$ a complete separable metric space.


Also, a natural topology is defined on the set $\mathcal E(X)$ of ends of every topological space $X$ as follows: Given an arbitrary $o\in X$, the open sets in $\mathcal E(X)$ are $\{\xi\in\mathcal E(X): \exists n: \xi_n\subseteq V \}$, where $V$ is an open set in $X$. If $X\in\mathfrak L$, then
it can be seen that the topology of $\mathcal E(X)$ is identical to the restriction of the topology of $\varphi(X)$ (see the Fell topology in~\ref{subsec:randomMeasure}), where $\varphi(X)$ is defined above (note that locally-connectedness implies that every connected component of $X\setminus\oball{r}{o}$ is both closed and open in $X\setminus \oball{r}{o}$).  
\unwritten{To prove this:\\ 1. Some care is needed at the boundary points of the balls.\\ 2. Use the fact that every connected component of $X\setminus\cball{r}{o}$ is also open (but for open balls, boundary needs care).\\ 3. Use the fact that the Fell topology is compact.}
In this case,
it can be seen that a closed subset $S\subseteq \mathcal E(X)$ can be uniquely represented as a sequence of closed sets  $C_1\supseteq C_2\supseteq\cdots$, where $C_n$ is the union of the connected components of $X\setminus\oball{n}{o}$ that \textit{hit} $S$. Thus, the same arguments as above can be used to define a Polish structure on the set of tuples $(X,o,S)$, where $X\in \mathfrak L$, $o\in X$ and $S$ is a closed subset of the set of ends of $X$.
}

{\section*{Acknowledgements}
The major part of this paper was written when the author was affiliated with Tarbiat Modares University\footnote{Department of Applied Mathematics, Faculty of Mathematical Sciences, Tarbiat Modares University, P.O. Box 4115-134, Tehran, Iran.} and IPM.\footnote{Institute for Research in Fundamental Sciences (IPM), Tehran, Iran.} This research was in part supported by a grant from IPM (No. 98490118). The revision of the paper was completed when the author was affiliated with Inria. This work was supported by the ERC NEMO grant, under the European Union's Horizon 2020 research and innovation programme, grant agreement number 788851 to INRIA.
}

\bibliography{bib} 
\bibliographystyle{plain}

\end{document}